\newtheorem{thrm}{Theorem}
\newtheorem{lem}[thrm]{Lemma}
\newtheorem{conj}[thrm]{Conjecture}
\newtheorem{cor}[thrm]{Corollary}
\newtheorem{rmk}{Remark}[section]
\newcommand{\LC}{\kappa}
\newcommand{\LCB}{\kappa_{\rm B}}
\newcommand{\rmax}{ { {\it r_{\max}}  } }
\DeclareMathOperator{\sech}{sech}
\newcommand{\mycaption}[1]{\parbox{0.95\textwidth}{\caption{{#1}}}}
\newcommand{\abs}[1]{\left\vert{#1}\right\vert}
\newcommand{\norm}[1]{\left\Vert{#1}\right\Vert}
\begin{document}
\begin{frontmatter}
\title{Singular standing-ring solutions of nonlinear partial differential equations}
\author[tau]{Guy Baruch}\ead{guy.baruch@math.tau.ac.il}
\author[tau]{Gadi Fibich\corref{cor1}}\ead{fibich@tau.ac.il}
\author[tau]{Nir Gavish}\ead{nirgvsh@tau.ac.il}
\address[tau]{School of Mathematical Sciences, Tel Aviv University, Tel Aviv 69978, Israel}
\cortext[cor1]{Corresponding author}
\begin{abstract}
We present a general framework for constructing singular solutions
of nonlinear evolution equations that become singular on
a~$d$-dimensional sphere, where $d>1$. The asymptotic profile and
blowup rate of these solutions are the same as those of solutions of
the corresponding one-dimensional equation that become singular at a
point. We provide a detailed numerical investigation of these new
singular solutions for the following equations: The nonlinear
Schr\"odinger equation~$i\psi_ t( t,{\bf
x})+\Delta\psi+|\psi|^{2\sigma}\psi=0$ with~$\sigma>2$, the
biharmonic nonlinear Schr\"odinger equation~$i\psi_t( t,{\bf
x})-\Delta^2\psi+|\psi|^{2\sigma}\psi=0$ with~$\sigma>4$, the
nonlinear heat equation~$\psi_t( t,{\bf
x})-\Delta\psi-|\psi|^{2\sigma}\psi=0$ with~$\sigma>0$, and the
nonlinear biharmonic heat equation~$\psi_t( t,{\bf
x})+\Delta^2\psi-|\psi|^{2\sigma}\psi=0$ with~$\sigma>0$.
\end{abstract}
\end{frontmatter}

\section{\label{sec:intro}Introduction}

In this study, we consider nonlinear evolution
equations of the form
\begin{equation}
\label{eq:NL_PDE_form}
u_t(t,{\bf x})=F[u,\Delta u,\Delta^2u,\cdots], \qquad {\bf x}\in\mathbb{R}^d, \quad d>1.
\end{equation}
Examples for such equations are the nonlinear Schr\"odinger equation,
the biharmonic nonlinear Schr\"odinger equation,
the nonlinear heat equation,
and the biharmonic  nonlinear heat equation.
It is well known that these equations admit solutions that become singular
at a point. Recently, it was discovered that the nonlinear Schr\"odinger equation with a quintic nonlinearity
admits solutions that become singular on
a $d$-dimensional sphere~\cite{Raphael-06,SC_rings-07,Vortex_rings-08,Raphael-08},
see Figure~\ref{fig:standingRingIlustration}. Following~\cite{SC_rings-07},
we refer to these solutions as {\em singular standing-ring} solutions.

The main goal of this study is to present a general framework for
constructing singular standing-ring solutions of nonlinear evolution
equations of the form~(\ref{eq:NL_PDE_form}). In order to understand
the basic idea, let us assume that equation~\eqref{eq:NL_PDE_form}
admits a singular standing-ring solution. Then, near the
singularity, equation~\eqref{eq:NL_PDE_form} reduces to the
one-dimensional equation
\begin{equation}
\label{eq:NL_PDE_form_one_dimensional}
u_t(t,r)=F[u,u_{rr},u_{rrrr},\cdots], \qquad r = |{\bf x}|.
\end{equation}
Hence, equation~(\ref{eq:NL_PDE_form_one_dimensional}) ``should''
admit a solution that becomes singular at a point.  Conversely, if
the one-dimensional equation~(\ref{eq:NL_PDE_form_one_dimensional})
admits a solution that becomes singular at a point, then
equation~\eqref{eq:NL_PDE_form} ``should'' admit a standing-ring
singular solution. Moreover, the asymptotic profile and blowup rate
of the standing-ring solutions of~\eqref{eq:NL_PDE_form}
 ``should'' be the same as those of the corresponding solution of the one-dimensional
equation~(\ref{eq:NL_PDE_form_one_dimensional}).

The above argument is obviously very informal. Nevertheless, in what
follows we will provide numerical evidence in support of the
relation between standing-ring singular solutions
of~\eqref{eq:NL_PDE_form} and singular solutions of the
one-dimensional equation~(\ref{eq:NL_PDE_form_one_dimensional}).

\begin{figure}
    \begin{center}
    \scalebox{.85}{\includegraphics{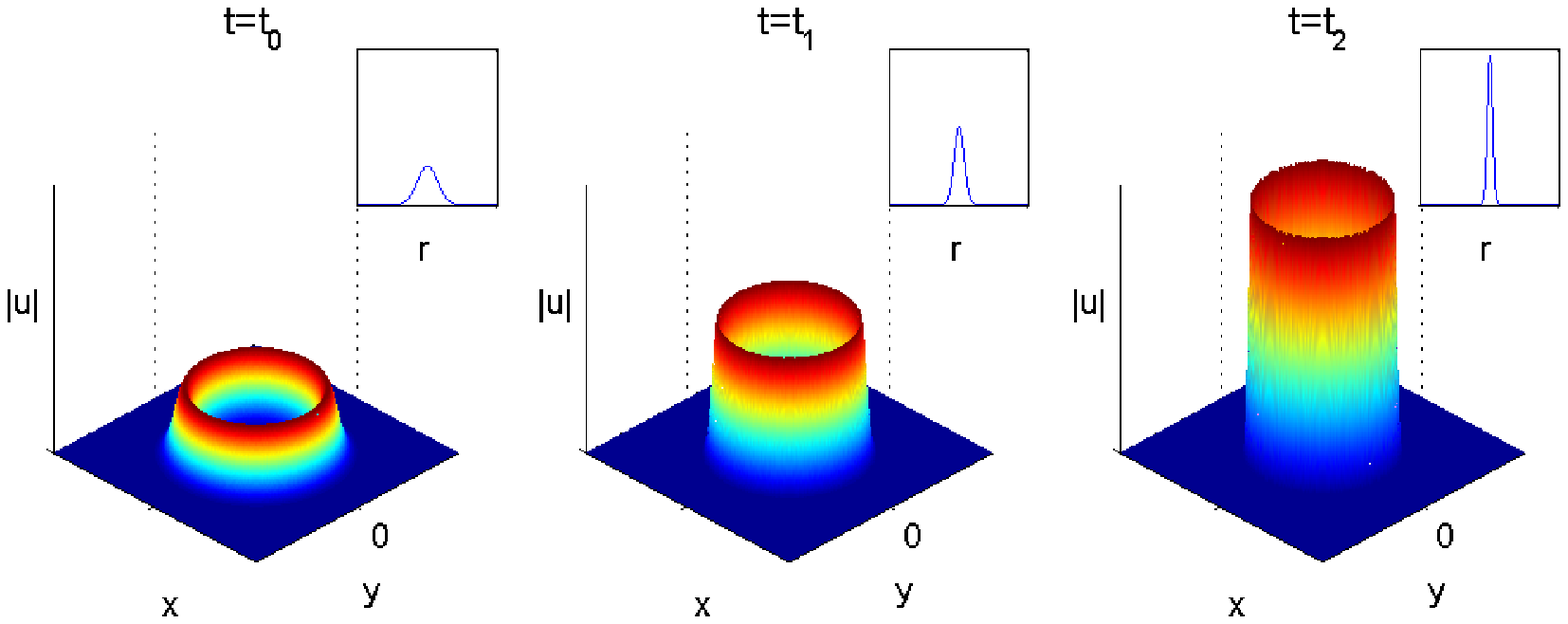}}
    \mycaption{Illustration of a two-dimensional singular standing-ring solution at
    times~$t_0<t_1<t_2$.
    The insets show the radial profile of~$u$.
     \label{fig:standingRingIlustration}
     }
    \end{center}
\end{figure}
\subsection{Peak-type and ring-type singular solutions of the nonlinear Schr\"odinger equation (NLS) - review}
The focusing nonlinear Schr\"odinger equation (NLS)
\begin{equation} \label{eq:intro_NLS}
    i\psi_ t( t,{\bf x})+\Delta\psi+|\psi|^{2\sigma}\psi=0,\qquad
    \psi(0,{\bf x})=\psi_0({\bf x}),
\end{equation}
where ${\bf x}\in\mathbb{R}^d$ and
$\Delta=\partial_{x_1x_1}+\cdots+\partial_{x_dx_d}$, is one of the
canonical nonlinear equations in physics, arising in various fields
such as nonlinear optics, plasma physics, Bose-Einstein condensates,
and surface waves.  One of the important properties of the NLS is
that it admits solutions which become singular at a finite time,
i.e.,
\[ \lim_{ t\to T_c}\|\psi\|_{H^1}=\infty,\qquad 0< T_c<\infty. \]
The NLS is called {\em subcritical} if $\sigma d<2$. In this case,
all solutions exist globally. In contrast, solutions of the {\em
critical}~($\sigma d=2$) and {\em supercritical}~($\sigma d>2$) NLS
can become singular in finite time.

When the initial condition~$\psi_0$ is radially-symmetric,
equation~\eqref{eq:intro_NLS} reduces to
\begin{equation}   \label{eq:radial-NLS}
   i\psi_t(t,r) + \psi_{rr}+\frac{d-1}{r}\psi_r +|\psi|^{2\sigma}\psi = 0,
   \qquad \psi(0,r)=\psi_0(r),\qquad d>1,
\end{equation}
where $r=|{\bf x}|$. Let us denote the location of the maximal
amplitude by
\[
\rmax(t) = \arg \displaystyle\max_r|\psi|.
\]
Singular solutions of~\eqref{eq:radial-NLS} are called `{\em
peak-type}' when~$\rmax(t)\equiv0$ for~$0\le t \le T_c$, and `{\em
ring-type}' when~$\rmax(t)>0$ for~$0\le t < T_c$.

Until a few years ago, the only known singular NLS solutions were
peak-type.  In the critical case~$\sigma d=2$, it has been
rigorously shown~\cite{Merle-03} that peak-type solutions are
self-similar near the singularity, i.e.,~$\psi\sim\psi_R$, where
\begin{equation*}
\psi_R(t,r)=\frac{1}{L^{1/\sigma}(t)}R\left(\rho\right)e^{i\tau+i\frac{L_t}{4L}r^2},
\end{equation*}
\begin{equation*}
\tau=\int_0^t\frac{ds}{L^2(s)},\qquad\rho=\frac{r-r_{max}( t)}{L(t)},\qquad
r_{max}( t)\equiv0,
\end{equation*}
the self-similar profile~$R(\rho)$ attains its global maximum
at~$\rho=0$, and the blowup rate~$L(t)$ is given by the {\em loglog
law}
\begin{equation}L( t)\sim\left(\frac{2\pi( T_c- t)}{\log\log1/( T_c- t)}\right)^\frac{1}{2},\qquad t\to T_c.
\label{eq:logloglaw}
\end{equation}
In the supercritical case~$(\sigma d>2)$, the rigorous theory is far
less developed. However, formal calculations and numerical
simulations~\cite{Sulem-99} suggest that peak-type solutions of the
supercritical NLS collapse with the self-similar~$\psi_S$ profile,
i.e.,~$\psi\sim\psi_S$, where
\begin{subequations}\label{eq:intro_psiS}
\begin{equation}\label{eq:intro_psiS_profile}
\psi_S(t,r)=\frac{1}{L^{1/\sigma}(t)}S\left(\rho\right)e^{i\tau+i\frac{L_t}{4L}r^2},
\end{equation}
\begin{equation}\label{eq:intro_psiS_profile2}
\tau=\int_0^t\frac{ds}{L^2(s)},\qquad\rho=\frac{r-r_{max}( t)}{L(t)},\qquad
r_{max}( t)\equiv0,
\end{equation}
$|S(\rho)|$ attains its global maximum at~$\rho=0$, and the blowup
rate is a square-root, i.e.,
\begin{equation}\label{eq:intro_psiS_blowuprate}
L(t)\sim \kappa\sqrt{T_c-t},\qquad t\to T_c.
\end{equation}
\end{subequations}

 In the last few years, new singular solutions of the NLS
were discovered, which are
ring-type~\cite{Gprofile-05,SC_rings-07,Raphael-06,Raphael-08,Vortex_rings-08}.
In~\cite{SC_rings-07}, we showed that the NLS with~$d>1$
and~$\frac2d\le\sigma\le2$ admits singular ring-type solutions that
collapse with the~$\psi_Q$ profile, i.e.,~$\psi\sim\psi_Q$, where
\begin{subequations}
\label{eq:psiQ}
\begin{equation}
\psi_Q(t,r)=\frac{1}{L^{1/\sigma}(t)}Q\left(\rho\right)e^{i\tau+i\alpha\frac{L_t}{4L}r^2+i(1-\alpha)\frac{L_t}{4L}(r-r_{max}(
t))^2},
\end{equation}
\begin{equation}
\tau=\int_0^t\frac{ds}{L^2(s)},\qquad\rho=\frac{r-r_{max}( t)}{L(t)},\qquad
r_{max}( t)=r_0 L^\alpha( t),
\end{equation}
and
\begin{equation}\label{eq:psiQ_alpha}
\alpha = \frac{2-\sigma}{\sigma(d-1)}.
\end{equation}
\end{subequations}
The self-similar profile~$Q$ attains its global maximum at~$\rho=0$.
Hence,~$r_{max}(t)$ is the ring radius and~$L(t)$ is the ring width,
see Figure~\ref{fig:rm_def}.
\begin{figure}
\begin{center}
      \centerline{\hbox{\vspace{-5cm}
                  \scalebox{.6}{\includegraphics{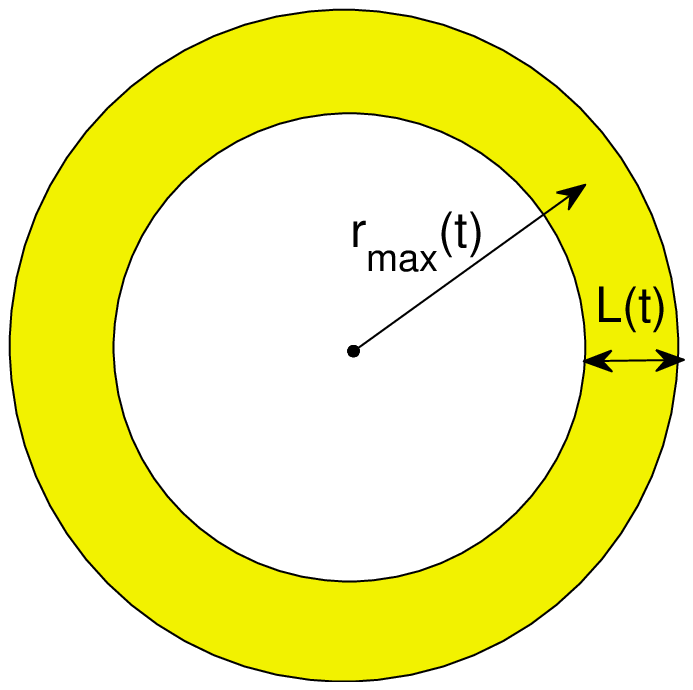}}
                  \hspace{0.5cm}
                  \scalebox{.65}{\includegraphics{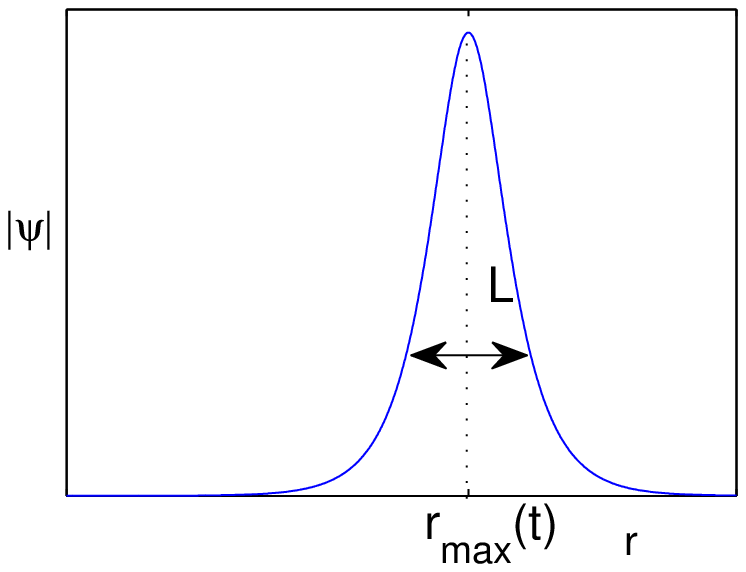}
                  }}}
 \mycaption{Illustration of ring radius~$\rmax( t)$ and width~$L( t)$.\label{fig:rm_def}}
\end{center}
\end{figure}

The~$\psi_Q$ ring solutions can be classified as follows, see
Figure~\ref{fig:NLS_class_diagram}:
\begin{enumerate}
    \renewcommand{\theenumi}{\Alph{enumi}}
    \item In the  subcritical case~($\sigma d<2$), all NLS solutions globally
            exist, hence no singular ring solutions exist.
    \item The critical case~$\sigma d=2$ corresponds to~$\alpha=1$.
            Since~$\rmax( t)=r_0 L( t)$, these solutions undergo
            an {\em equal-rate collapse}, i.e., the ring radius
            goes to zero at the same rate as~$L( t)$. The blowup
            rate of these critical ring solutions is a square
            root.
    \item The supercritical case~$2/d<\sigma<2$ corresponds to $0<\alpha<1$.
            Therefore, the ring radius~$\rmax( t)=r_0 L^\alpha(
            t)$ decays to zero, but at a slower rate than~$L(
            t)$. The blowup rate of these ring solutions is
                \begin{equation}\label{eq:blowup_rate_with_p}
                    L(t)\sim \kappa(T_c-t)^p,
                \end{equation}
where~$p=\frac{1}{1+\alpha}=\frac{\sigma(d-1)}{2+\sigma(d-2)}$.
    \item The supercritical case $\sigma=2$ corresponds  to $\alpha=0$.
            Since~$\rmax(t)\equiv r_0$, the solution becomes
            singular on the d-dimensional sphere~$|{\bf
            x}|=r_0$, rather than at a point.  The blowup
            rate of these solutions is given by the loglog law~\eqref{eq:logloglaw}.
    \item The case~$\sigma>2$ was open until now.
\end{enumerate}
\begin{figure}[h]
    \begin{center}
           \scalebox{0.7}{
        \input{NLS_ring_class.pstex_t}}
        \mycaption{\label{fig:NLS_class_diagram}
        Classification of singular ring solutions of the NLS as a function of
        $\sigma$ and $d$.
        A:~subcritical case.
        B:~critical case, with equal-rate collapse~\cite{Gprofile-05}.
        C:~$2/d<\sigma<2$, shrinking rings~\cite{SC_rings-07}.
        D:~$\sigma=2$, standing rings~\cite{SC_rings-07,Raphael-06,Raphael-08}.
        E:~the case $\sigma>2$, which is considered in this study.
        }
    \end{center}
\end{figure}
Thus, the~$\psi_Q$ solutions are {\em shrinking rings}
(i.e.,~$\lim_{t\to T_c}{r_{max}(t)}=0$) for~$\frac2d\le \sigma <2$
(cases B and C), and {\em standing rings} (i.e.,~$0<\lim_{t\to
T_c}{r_{max}(t)}<\infty$) for~$\sigma=2$ (case D).

\subsection{Singular standing-ring solutions of the NLS}
One of the goals of this paper is to study singular ring-type
solutions for $\sigma>2$ (case E). The most natural guess is that
these solutions also blowup with the~$\psi_Q$ profile.
Since~$\alpha<0$ for~$\sigma>2$, see~\eqref{eq:psiQ_alpha},
$\psi_Q$~should be an {\em expanding ring}, i.e.,~$\lim_{t\to
T_c}{r_{max}}=\infty$. In this study we show that although such
expanding rings do not violate power conservation,~$\psi_Q$ ring
solutions cannot exist for~$\sigma>2$. Rather,  singular rings
solutions of the NLS~\eqref{eq:radial-NLS} with~$\sigma>2$ are
standing rings.

The blowup profile and rate of standing-ring solutions can be
obtained using the following informal argument. In the ring region
of a standing-ring,~$\psi_{rr}\sim\frac1{L^2}$
and~$\frac1r{\psi_r}\sim \frac1{L\cdot r_{max}}$. Therefore,
the~$\frac{d-1}r\psi_{r}$ term in equation~\eqref{eq:radial-NLS}
becomes negligible compared with~$\psi_{rr}$ as~$t\to T_c$. Hence,
near the singularity, equation~\eqref{eq:radial-NLS} reduces to the
one-dimensional NLS\footnote{Throughout this paper, we denote the
solution of the one-dimensional NLS by~$\phi$, and its spatial
variable by~$x$.}
\[
i\phi(t,x)+\phi_{xx}+|\phi|^{2\sigma}\phi=0,\quad x=r-r_{max}.
\]
Therefore, the blowup profile and blowup-rate of standing-ring
solutions of the NLS~\eqref{eq:radial-NLS} with $d>1$
and~$\sigma\ge2$ are the same as those of singular peak-type
solutions of the one-dimensional NLS with the same~$\sigma$.
Specifically, standing-ring solutions are self-similar in the ring
region, i.e.,~$\psi\sim\psi_F$, where~$\psi_F$ is, up to a shift
in~$r$, the asymptotic peak-type profile of the one-dimensional NLS,
i.e.,
\[
        \psi_F(t,r-r_{max})=\phi_S( t,x ),
\]
and~$\phi_S$ is given by~\eqref{eq:intro_psiS_profile} with~$d=1$.
In addition, the blowup rate~$L(t)$ of~$\psi_F$ is the same as the
blowup rate of~$\phi_S$, see~\eqref{eq:intro_psiS_blowuprate}, and
is equal to a square-root, i.e.,
\begin{equation}\label{eq:sqrt_blowup_rate}
L(t)\sim \kappa(\sigma)\sqrt{T_c-t},\qquad t\to T_c.
\end{equation}
Moreover,~$\kappa(\sigma)$ is universal, i.e., it depends
on~$\sigma$, but not on the dimension~$d$ or the initial
condition~$\psi_0$.

Numerically, we observe that~$\psi_F$ is an attractor for a large
class of radially-symmetric initial conditions, but is unstable with
respect to symmetry-breaking perturbations.
\subsection{Singular standing vortex solutions of the NLS}
The two-dimensional NLS
\begin{equation*}
i\psi_t(t,x,y)+\Delta \psi+|\psi|^{2\sigma}\psi=0,\qquad \psi(0,x,y)=\psi_0(x,y),\qquad \Delta=\partial_{xx}+\partial_{yy},
\end{equation*}
admits vortex solutions of the form $
   \psi(t,x,y) = A(t,r)e^{im\theta}$, where~$m\in\mathbb{Z}$.
In~\cite{Vortex_rings-08}, we presented a systematic study of
singular vortex solutions.  In particular, we showed that there
exist singular vortex solutions that collapse with the asymptotic
profile~$\psi_Q\cdot e^{im\theta}$, when~$\psi_Q$ is given
by~\eqref{eq:psiQ}. The blowup rates of these solutions are the same
as those of~$\psi_Q$ in the non-vortex case. Therefore,
the~$\psi_Q\cdot e^{im\theta}$ vortex solutions can be classified as
follows:
\begin{enumerate}
    \renewcommand{\theenumi}{\Alph{enumi}}
    \item In the  subcritical case~($\sigma<1$), all NLS solutions globally
            exist, hence no singular vortex solutions exist.
    \item In the critical case~$\sigma=1$, these solutions undergo
            an {\em equal-rate collapse} at a square root blowup
            rate.
    \item The supercritical case~$1<\sigma<2$ corresponds to $0<\alpha<1$.
            In this case, the ring radius decays to zero at a
            slower rate than~$L( t)$ and the blowup rate is
            given by~\eqref{eq:blowup_rate_with_p}
            where~$p=\frac{1}{1+\alpha}=\frac\sigma2$.
    \item The supercritical case $\sigma=2$ corresponds to~$\alpha=0$.  Therefore, the
            solution becomes singular on a circle. The blowup
            rate is given by the loglog
            law~\eqref{eq:logloglaw}.
    \item The case~$\sigma>2$ was open until now.
\end{enumerate}

In this study, we show numerically that there exist singular standing-vortex
solutions of the two-dimensional NLS with~$\sigma>2$ (case E) with
the asymptotic profile~$e^{im\theta}\psi_F$. Moreover, the blowup
rate of these singular standing vortices is the same as that of the
standing-ring solutions in the non-vortex case, i.e., is given
by~\eqref{eq:sqrt_blowup_rate}. Therefore, these results extend the
ones obtained in the non-vortex case.

\subsection{Singular solutions of the biharmonic nonlinear Schr\"odinger equation}
Let us consider the focusing biharmonic nonlinear Schr\"odinger equation (BNLS)
equation
\begin{equation}    \label{eq:BNLS}
    i\psi_t(t,r) - \Delta^2\psi + \left|\psi\right|^{2\sigma}\psi = 0,
\end{equation}
where~$\Delta^2$ is the radial biharmonic operator. Here, singularly
formation is defined as~${\displaystyle \lim_{t\to
T_c}}\norm{\psi}_{H^2}=\infty$. In the {\em subcritical} case
$\sigma d<4$, all BNLS solutions exists
globally~\cite{Fibich_Ilan_George_BNLS:2002}. Numerical
simulations~\cite{Fibich_Ilan_George_BNLS:2002,Baruch_Fibich_Mandelbaum:2009}
indicate that in the {\em critical} case $\sigma=4/d$ and the {\em
supercritical} case $\sigma\geq4/d$, the BNLS admits singular
solutions. At present, however, there is no rigorous proof that the
BNLS admits singular solutions, whether peak-type or ring-type.

Peak-type singular solutions of the BNLS~\eqref{eq:BNLS} were
recently studied numerically
in~\cite{Baruch_Fibich_Mandelbaum:2009}. The blowup rate of these
solutions is slightly faster than~$p=1/4$ in the critical case
($1/4+$loglog?), and is equal to~$p=1/4$ in the supercritical case.

The BNLS~\eqref{eq:BNLS} also admits ring-type singular solutions
for~$4/d\leq \sigma \leq 4$~\cite{Baruch_Fibich_Mandelbaum:2009}.
These solutions are of the form~$\psi\sim\psi_{Q_{\tiny B}}$,
where
\begin{subequations}
\label{eq:psiQ_BNLS}
\begin{equation}
\label{}
|\psi_{Q_{\tiny{B}}}|=\frac{1}{L^{1/{2\sigma}}(t)}Q_{\tiny{
B}}\left(\rho\right),
\end{equation}
\begin{equation}
\qquad\rho=\frac{r-r_{max}( t)}{L(t)},\qquad r_{max}( t)=r_0
L^{\alpha_{\tiny{B}}}( t),
\end{equation}
and
\begin{equation}\label{eq:psiQ_alpha_BNLS}
\alpha_{\tiny{B}} = \frac{4-\sigma}{\sigma(d-1)}.
\end{equation}
\end{subequations}
The~$\psi_{Q_B}$ solutions can be classified as follows (see
Figure~\ref{fig:BNLS_class_diagram}):
\begin{enumerate}
    \renewcommand{\theenumi}{\Alph{enumi}}
    \item In the subcritical case~($\sigma d<4$), all BNLS solutions globally
            exist, hence no collapsing ring solutions exist.
    \item The critical case~$\sigma d=4$ corresponds
    to~$\alpha_{\tiny{B}}=1$. These solutions undergo an {\em
            equal-rate collapse}. The blowup rate of these
            critical ring solutions is given
            by~\eqref{eq:blowup_rate_with_p} with~$p=1/4$.
    \item The supercritical case~$4/d<\sigma<4$ corresponds to $0<\alpha_{\tiny
B}<1$. Therefore, the ring radius~$\rmax( t)=r_0 L^\alpha_{\tiny
            B}(t)$ decays to zero, but at a slower rate than~$L(
            t)$. The blowup rate of these ring solutions is
            given by~\eqref{eq:blowup_rate_with_p}
            with~$p=1/(3+\alpha_{\tiny
B})=\sigma(d-1)/(4+3\sigma
           d-4\sigma)$.
    \item The case $\sigma=4$ corresponds to $\alpha_{\tiny
B}=0$. Since~$\rmax(t)\equiv r_0$, the solution is a singular
            standing ring. The blowup rate is close to~$p=1/4$
            and is conjectured to be~$1/4$ with a loglog
            correction.
    \item The case~$\sigma>4$ was open until now.
\end{enumerate}
\begin{figure}[h]
    \begin{center}
            \scalebox{0.7}{
            \input{BNLS_ring_class.pstex_t}}
        \mycaption{\label{fig:BNLS_class_diagram}
        Classification of singular ring solutions of the BNLS as a function of
        $\sigma$ and $d$.
        A: subcritical case.
        B: critical case, with equal-rate collapse~\cite{Baruch_Fibich_Mandelbaum:2009}.
        C: $4/d<\sigma<4$, shrinking rings~\cite{Baruch_Fibich_Mandelbaum:2009}.
        D: $\sigma=4$, standing rings with the critical $1D$
        profile~\cite{Baruch_Fibich_Mandelbaum:2009}.
        E: the case $\sigma>4$, which is considered in this study.
        }
    \end{center}
\end{figure}
Thus, up to the change~$\sigma\longrightarrow 2\sigma$, this
classification is completely analogous to that of singular ring
solutions of the NLS (see Figure~\ref{fig:NLS_class_diagram}).
In this work we show numerically that this analogy carries through to the
regime~$\sigma>4$.  Thus, the BNLS with~$\sigma>4$ and~$d>1$ admits
singular standing-ring solutions. Near the standing-ring peak,
equation~\eqref{eq:BNLS} reduces to the one-dimensional BNLS
\begin{equation*}
    i\phi_t(t,x)-\phi_{xxxx}+|\phi|^{2\sigma}\phi=0.
\end{equation*}
Therefore, the blowup profile and blowup-rate of standing ring
solutions of the BNLS~\eqref{eq:BNLS} with $d>1$ and~$\sigma\ge4$
are the same as those of collapsing peak solutions of the
one-dimensional BNLS with the same value of~$\sigma$.

Thus, the results for ring solutions of the BNLS with~$\sigma>4$ are
completely analogous, up to the change~$\sigma\longrightarrow
2\sigma$, to those for singular standing-ring solutions of the NLS
with~$\sigma>2$.

\subsection{Singular solutions of the nonlinear heat equation and the biharmonic nonlinear heat equation}
The $d$-dimensional nonlinear heat equation (NLHE)
\begin{equation}    \label{eq:intro_NLHE}
u_t(t,r)-\Delta u-|u|^{2\sigma}u=0,\qquad \sigma>0,\quad d>1,
\end{equation}
and the $d$-dimensional nonlinear {\em biharmonic} heat equation (NLBHE)
\begin{equation}    \label{eq:intro_NLBHE}
u_t(t,r)+\Delta^2 u-|u|^{2\sigma}u=0,\qquad \sigma>0,\quad d>1.
\end{equation}
admit singular solutions for
any~$\sigma>0$~\cite{GigaKohn_85,Budd-Williams-Galaktionov_2004}. To
the best of our knowledge, until now all known singular solutions
of~\eqref{eq:intro_NLHE} and of~\eqref{eq:intro_NLBHE} collapse at a
point (or at a finite number of points~\cite{Merle_Heat_1992}).
In this study, we provide numerical evidence that the NLHE~\eqref{eq:intro_NLHE}
and the NLBHE~\eqref{eq:intro_NLBHE} admit singular standing-ring solutions.
The blowup profile and blowup rate of these solutions are the same as those of
singular peak-type solutions of the corresponding one-dimensional equation.

\subsection{Critical exponents of singular ring solutions}\label{sec:discussion}
In Figure~\ref{fig:blowup_rate_summary}A we plot the blowup rate
parameter~$p$ of singular ring solutions of the NLS,
see~\eqref{eq:blowup_rate_with_p}. As~$\sigma$ increases from~$2/d$
to~$2-$,~$p$ increases monotonically from~$1/2$ to~$1-$.
When~$\sigma=2$, the blowup rate is given by the loglog
law~\eqref{eq:logloglaw}, i.e.,~$p=1/2$ with a loglog correction.
Finally,~$p=1/2$~for~$\sigma>2$.
\begin{figure}
    \begin{center}
    \scalebox{.8}{\includegraphics{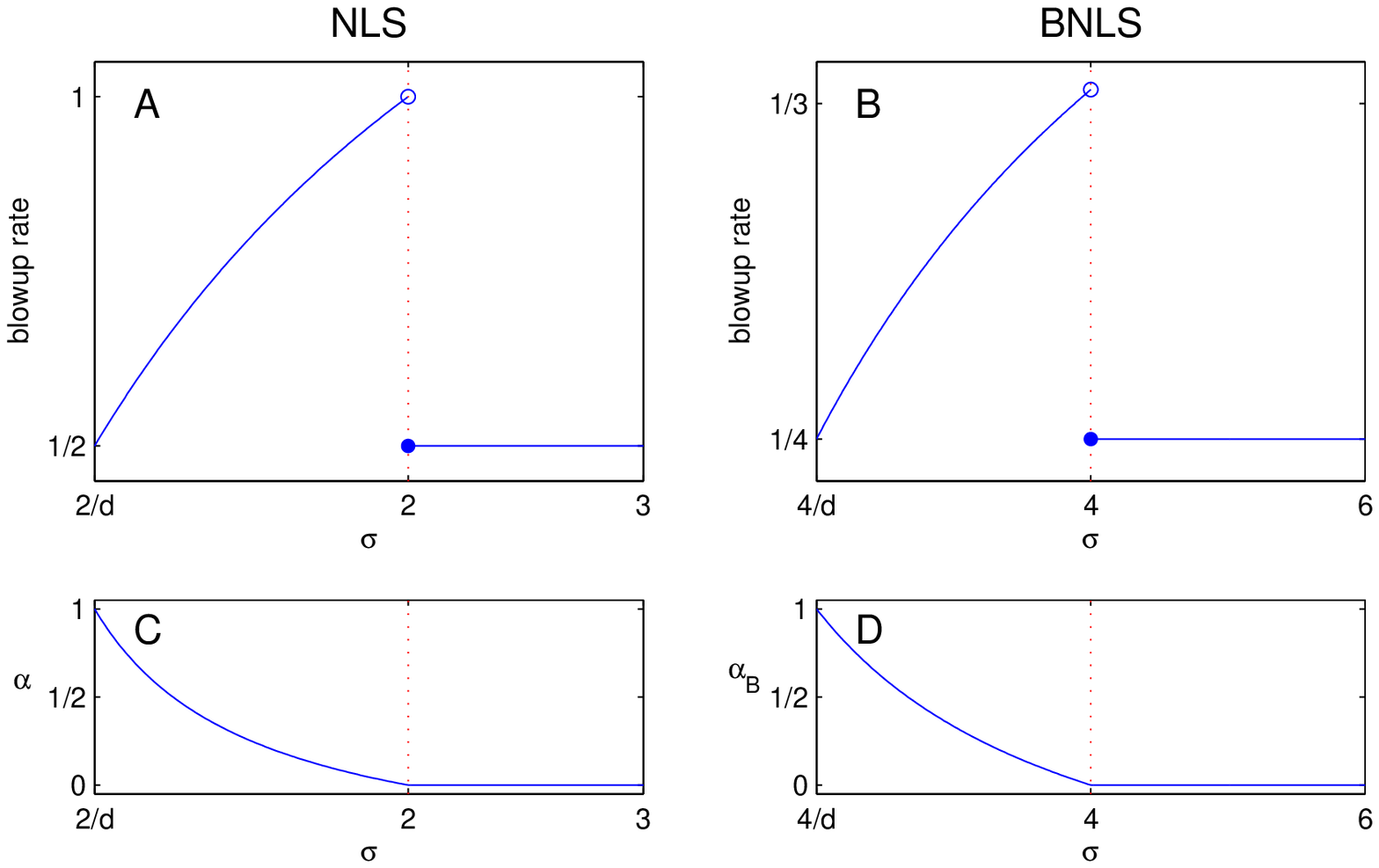}}
    \mycaption{
A: Blowup rate of singular ring solutions of the NLS.
The blowup rate increases monotonically from~$p=1/2$ at~$\sigma=2/d$ to~$p=1-$
at~$\sigma=2-$. For~$\sigma = 2$
(full circle)~$p=1/2$ (with a loglog correction) and for~$\sigma>2$,~$p\equiv1/2$.
B: Blowup rate of singular ring solutions of the BNLS.
The blowup rate increases monotonically from~$p=1/4$ at~$\sigma=4/d$ to~$p=(1/3)-$ at~$\sigma=4-$.
For~$\sigma = 4$ (full circle)~$p=1/4$ (with a loglog correction?) and for~$\sigma>4$,~$p\equiv1/4$.
C: The shrinkage parameter~$\alpha$, defined by the relation~$r_{max}\sim r_0L^\alpha$ of singular ring solutions of the NLS.  For~$2/d\le\sigma<2$,~$\alpha$ decreases monotonically from 1 to 0+ (shrinking rings). For~$\sigma\ge2$,~$\alpha\equiv0$ (standing rings).
D: The shrinkage parameter~$\alpha_B$ of singular ring solutions of the BNLS.  For~$4/d\le\sigma<4$,~$\alpha_B$ decreases monotonically from 1 to 0+ (shrinking rings). For~$\sigma\ge4$,~$\alpha_B\equiv0$ (standing rings).
     \label{fig:blowup_rate_summary}
    }
    \end{center}
\end{figure}
Since
\[
\lim_{\sigma\to2-}p=1,\qquad\lim_{\sigma\to2+}p=\frac12,
\]
the blowup rate has a discontinuity at~$\sigma=2$. Surprisingly, the
blowup rate is not monotonically-increasing with~$\sigma$. For
example, a ring solution of the NLS with~$\sigma=1.8$ blows up
faster than a ring solution of the NLS with~$\sigma=2.2$.

The above results show that
{\em the critical exponent of singular ring solutions of the NLS is $\sigma=2$}:
The blowup rate is discontinuous at~$\sigma=2$, and the blowup dynamics changes from a
shrinking-ring~$(\sigma<2)$ to a standing-ring~$(\sigma\ge2)$, see
Figure~\ref{fig:blowup_rate_summary}C. We can understand
why~$\sigma=2$ is a critical exponent using the following argument.
Standing-ring solutions are `equivalent' to singular peak solutions
of the one-dimensional NLS with the same nonlinearity
exponent~$\sigma$. Since~$\sigma=2$ is the critical exponent for
singularity formation in the one-dimensional NLS, it is also a
critical exponent for standing-ring blowup.

An analogous picture exists for the BNLS. For~$4/d\le\sigma<4$, the
blowup-rate~$p$ of the BNLS ring solutions increases monotonically
in~$\sigma$ from~$1/4$ to~$(1/3)-$, at~$\sigma=4$,~$p=1/4$ possibly
with a loglog correction, and~$p=1/4$ for~$\sigma>4$, see
Figure~\ref{fig:blowup_rate_summary}B.
Therefore, the blowup rate is discontinuous at~$\sigma=4$.
In addition, the blowup dynamics change at~$\sigma=4$ from a
shrinking-ring~$(\sigma<4)$ to a standing-ring~$(\sigma>4)$, see
Figure~\ref{fig:blowup_rate_summary}D.
Hence, {\em the critical exponent of standing-ring solutions of the BNLS is
$\sigma=4$},
precisely because it is the critical exponent for singularity
formation in the one-dimensional BNLS.

In the case of the NLHE and BNLHE equations, there is no critical
exponent of singular ring solutions.
Indeed, these equations admit standing-ring solution for any~$\sigma>0$,
precisely because there is no critical exponent for singularity formation
in the corresponding one-dimensional equations.
\subsection{Paper outline}\label{sec:outline}
The paper is organized as follows. In
Section~\ref{sec:1D_NLS_collapse} we review the theory of singular
peak-type solutions of the supercritical NLS, and conduct a
numerical study of the one-dimensional case. In
Section~\ref{sec:theory_for_NLS_standing} we prove that
standing-ring blowup can only occur for~$\sigma\geq2$, and show that
the blowup profile and blowup-rate of singular standing ring
solutions of the supercritical NLS with~$\sigma>2$ and~$d>1$ are the
same as those of peak-type solutions of the one-dimensional NLS
equation. In Section~\ref{sec:simulations_for_NLS_standing} we
confirm these results numerically. We then show numerically that the
singular standing-ring profile~$\psi_F$ is an attractor for
radially-symmetric initial
conditions~(Section~\ref{sec:robustness}), but it is unstable with
respect to symmetry-breaking
perturbations~(Section~\ref{sec:instablilty_symmetry_breaking}). In
Section~\ref{sec:NLS_non-standing_rings} we show analytically and
numerically that expanding~$\psi_Q$ ring solutions do not exist
for~$\sigma>2$. Section~\ref{sec:vortex} extends the results to
singular vortex solutions.  In Section~\ref{sec:1D_BNLS_collapse} we
study singular peak-type solutions of the one-dimensional
supercritical BNLS. In Section~\ref{sec:BNLS_standing} we show that
singular ring solutions of the supercritical BNLS with~$\sigma>4$
are standing-rings, whose blowup profile and blowup-rate are the
same as those of peak-type solutions of the one-dimensional BNLS. In
Section~\ref{sec:NLH} we show that singular standing-ring solutions
of the nonlinear heat equation exist for any~$\sigma>0$, and that
their blowup profile and blowup-rate are the same as those of
peak-type solutions of the one-dimensional NLHE. In
Section~\ref{sec:BNLHE} we show that singular standing-ring
solutions of the nonlinear biharmonic heat equation exist for
any~$\sigma>0$, and that their blowup profile and blowup-rate are
the same as those of peak-type solutions of the one-dimensional
BNLHE. The numerical methods used in this study are briefly
described in Section~\ref{sec:numerical_methods}.
\section{Singular peak-type solutions of the one-dimensional supercritical NLS}\label{sec:1D_NLS_collapse}
\subsection{Theory review}\label{sec:theory_for_1D_NLS_collapse}
Let us consider the one-dimensional supercritical NLS
\begin{equation}\label{eq:1D_SC_NLS}
    i\phi_t(t,x)+\phi_{xx}+|\phi|^{2\sigma}\phi=0,  \qquad \sigma >2.
\end{equation}
In contrast to the extensive theory on singularity formation in the
critical NLS, much less is known about the supercritical case.
Previous numerical simulations and formal calculations (see,
e.g.,~\cite[Chapter 7]{Sulem-99} and the references therein)
suggested that peak-type singular solutions of the supercritical
NLS~(\ref{eq:1D_SC_NLS}) collapse with a self-similar asymptotic
profile~$\phi_S$, .i.e.,~$\phi\sim\phi_S$, where
\begin{equation}
\phi_S(
t,x)=\frac{1}{L^{1/\sigma}(t)}S\left(\xi\right)e^{i\tau+i\frac{L_t}{4L}x^2},
\qquad\xi=\frac{x}{L(t)},\qquad \tau=\int_0^t\frac{ds}{L^2(s)}. \label{eq:psiS}
\end{equation}
The blowup rate~$L(t)$ of these solutions is a square root,
i.e.,
\begin{equation}
L( t)\sim \LC\sqrt{ T_c- t},\qquad  t\to T_c,
\label{eq:intro_square_root_blowuprate}
\end{equation} where $\LC>0$.
In addition, the self-similar profile~$S$ is the solution of
\begin{eqnarray}\label{eq:ODE4S}
    S^{\prime\prime}(\xi) -
    \left(1 + i\frac{\sigma-2}{4\sigma}\LC^2 -\frac{\LC^4}{16}\xi^2     \right) S
    + |S|^{2\sigma}S = 0,     \qquad S'(0)=0,\qquad S(\infty)=0.
\end{eqnarray}
In general, solutions of~(\ref{eq:ODE4S}) are complex-valued, and
depend on the parameter~$\LC$ and on the initial condition~$S(0)$.
Solutions of~(\ref{eq:ODE4S}) whose amplitude~$|S|$ is
monotonically-decreasing in~$\xi$, and which have a zero
Hamiltonian, are called {\em admissible solutions}~\cite{Sulem-99}.
For each~$\sigma$, equation~\eqref{eq:ODE4S} has a unique admissible
solution (up to a multiplication by a constant phase~$e^{i\alpha}$).
This solution is attained for specific real values of~$\LC$ and~$S(0)$,
which we denote as
\begin{equation}\label{eq:unique_parms_ODE4S}
\LC=\LC_S(\sigma),\qquad S(0)=S_0(\sigma).
\end{equation}
Moreover, numerical simulations and formal calculations suggest that:
\begin{enumerate}
    \item The self-similar profile of singular peak-type solutions of the
            NLS~(\ref{eq:1D_SC_NLS}) is an admissible solution
            of~\eqref{eq:ODE4S}.
    \item The constant $\LC$ of the blowup
            rate~\eqref{eq:intro_square_root_blowuprate} is
            universal (i.e., is independent of the initial
            condition~$\psi_0$), and is equal to
            $\LC_S(\sigma)$.
\end{enumerate}

\subsection{Simulations}
To the best of our knowledge, the theory of supercritical peak-type collapse
which is presented in Section~\ref{sec:theory_for_1D_NLS_collapse},
was tested numerically only for~$d\ge2$.  Since this theory is not
rigorous, and since we will make use of these results in
Sections~\ref{sec:NLS_standing}
and~\ref{sec:NLS_non-standing_rings}, we now confirm numerically the
above theoretical predictions for the one-dimensional supercritical
NLS~\eqref{eq:1D_SC_NLS}.

\begin{figure}
    \begin{center}
    \scalebox{.8}{\includegraphics{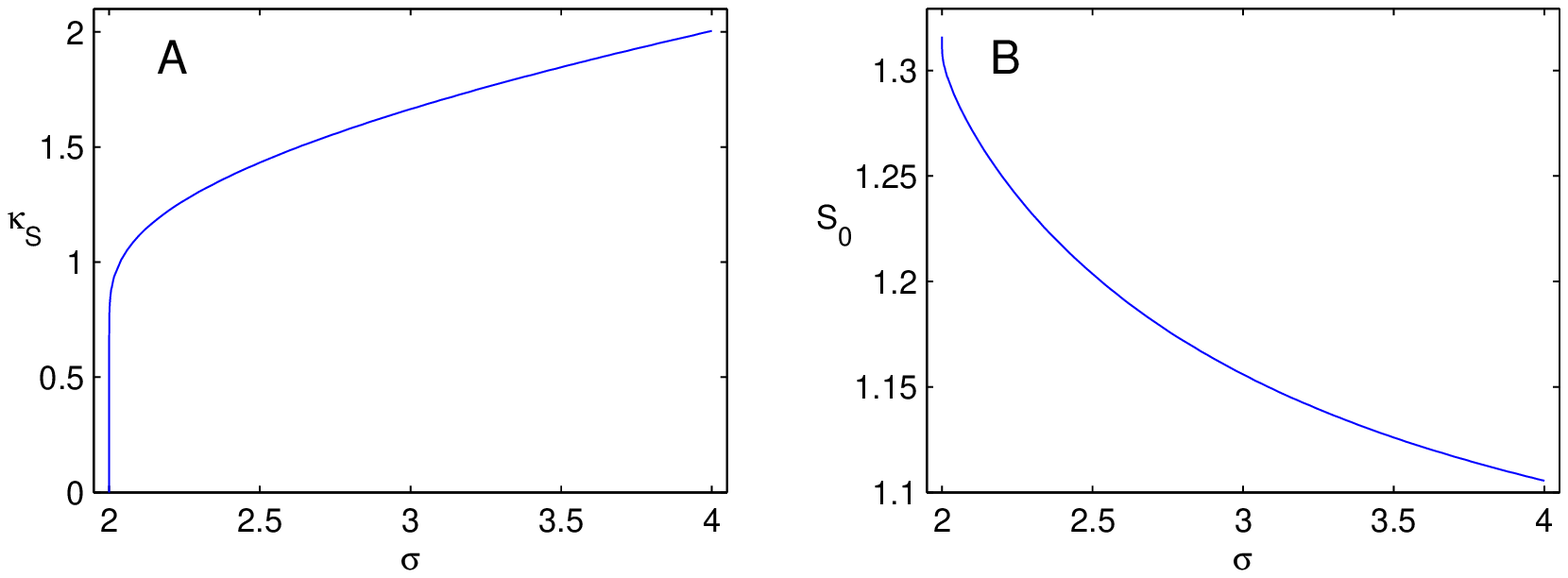}}
    \mycaption{
        The parameters~$\kappa_S$ and~$S_0$ of the
        admissible solutions of equation~(\ref{eq:ODE4S}), as function of~$\sigma$.
        \label{fig:fc_as_fuction_of_sigma}
    }
    \end{center}
\end{figure}

We verified numerically that for each~$\sigma$, there exists a
unique admissible solution of~\eqref{eq:ODE4S}. The corresponding
values of~$\kappa_S$ and~$S_0$ as a function of~$\sigma$ are shown
in Figure~\ref{fig:fc_as_fuction_of_sigma}.
For example, for~$\sigma=3$, the parameters of the admissible
solution of equation~(\ref{eq:ODE4S}) are
\begin{equation}\label{eq:ODE4s_parms_sigma=3}
    \LC_S(\sigma=3)\approx1.664,\qquad  S_0(\sigma=3)\approx1.155.
\end{equation}

We now solve the one-dimensional NLS~(\ref{eq:1D_SC_NLS})
with~$\sigma=3$ and the Gaussian initial
condition~$\left.\phi_0(t=0,x)=2e^{-2x^2}\right.$. We first show
that the NLS solution collapses with the self-similar
profile~$\phi_S$, see~\eqref{eq:psiS}. To do that, we rescale the
solution according to
\begin{equation}\label{eq:normalization}
\phi_{\rm
rescaled}(t,x)=L^{\frac1\sigma}(t)\left|\phi\left(\frac{x}{L(t)}\right)\right|\,\qquad
L(t)=\left(\frac{\|S\|_\infty}{\|\phi\|_\infty}\right)^\sigma=\left(\frac{S_0^{1D}(\sigma)}{\|\phi\|_\infty}\right)^\sigma.
\end{equation}
The rescaled profiles at focusing levels of~$1/L=10^4$ and~$1/L =
10^{8}$ are indistinguishable, see
Figure~\ref{fig:NLS_d=1_sigma=3}A, indicating that the solution is
indeed self-similar while focusing over 4 orders of magnitude.
Moreover, the rescaled profiles are in perfect fit with the
admissible~$S(\xi;\sigma=3)$ profile.
\begin{figure}
    \begin{center}
    \scalebox{.8}{\includegraphics{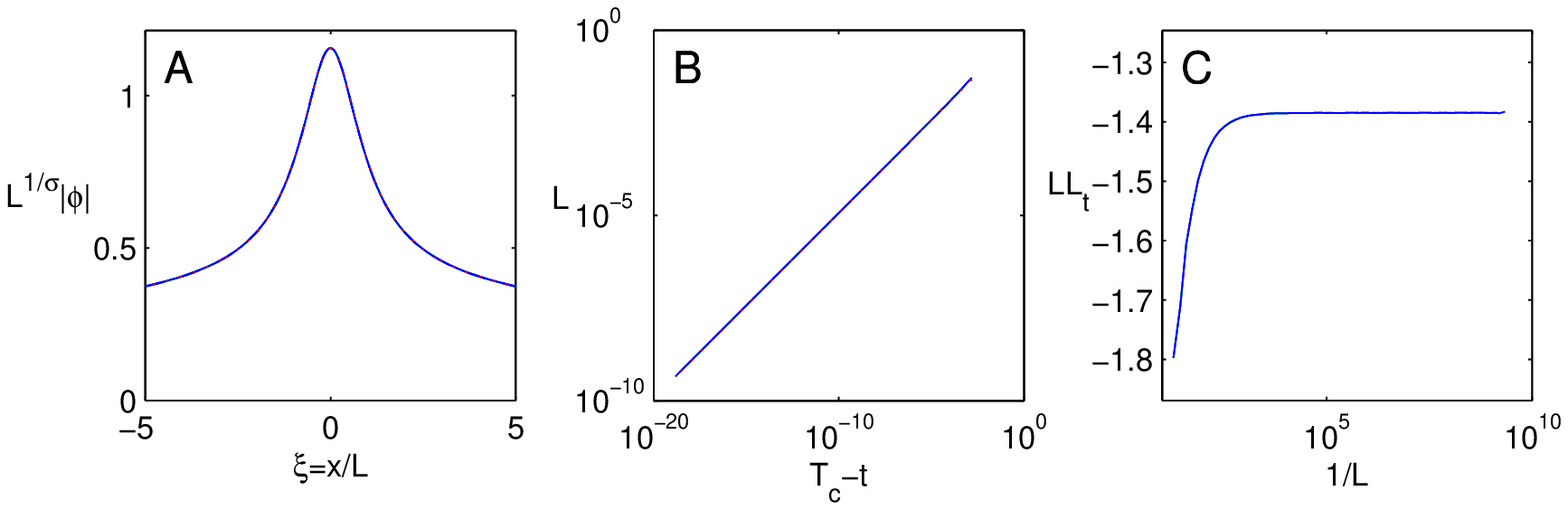}}
    \mycaption{
        Solution of the one-dimensional NLS~(\ref{eq:1D_SC_NLS}) with $\sigma=3$ and
        the initial condition~$\psi_0=2e^{-2x^2}$.
        A:~Rescaled solution, see~(\ref{eq:normalization}), at the focusing
        levels $1/L=10^4$ (solid) and~$1/L=10^8$ (dashed); dotted curve is the admissible
        profile~$S(\xi;\sigma=3)$, all three curves are indistinguishable.
        B:~$L$ as a function of~$(T_c-t)$ on a logarithmic scale.
        Dotted curve is the fitted curve~$1.713\cdot( T_c-t)^{0.5007}$.
        The two curves are indistinguishable.
        C:~$LL_t$ as a function of $1/L$.    \label{fig:NLS_d=1_sigma=3}
    }
    \end{center}
\end{figure}

Next, we consider the blowup rate of $\phi$. To do
that, we first assume that
\begin{equation}\label{eq:L_assumption}
L(t)\sim \kappa(T_c-t)^p,
\end{equation}
and find the best fitting~$\kappa$ and~p, see
Figure~\ref{fig:NLS_d=1_sigma=3}B. In this case~$\kappa\approx
1.713$ and~$p \approx 0.5007$, indicating that the blowup rate
is~square-root or slightly faster.
In order to check whether~$L$ is slightly faster than a square root, we
compute the limit~$\displaystyle{\lim_{t\to T_c}}LL_t$, see~\cite{Gprofile-05}.
Recall that for the square-root blowup
rate~\eqref{eq:intro_square_root_blowuprate},
\[
\lim_{t\to T_c}LL_t=-\frac{\kappa^2}2<0,
\]
while for a faster-than-a-square root blowup rate~$LL_t$ goes to
zero. Since $ {\displaystyle\lim_{T_c\to t}}LL_t=-1.384$,
see Figure~\ref{fig:NLS_d=1_sigma=3}C, the blowup rate of~$\phi$ is
square-root (with no loglog correction), i.e.,
\begin{equation}\label{eq:kappa_blowup_1D}
L(t)\sim \LC^{\rm blowup-rate}_{1D}\sqrt{T_c-t},\qquad \LC_{1D}^{\rm
blowup-rate}\approx\sqrt{2\cdot1.384} \approx 1.664.
\end{equation}
In particular, there is an excellent match (to 4 digits)
between~$\LC^{\rm blowup-rate}_{1D}\approx 1.664$ extracted from the blowup rate
of~$\phi$, see~\eqref{eq:kappa_blowup_1D} and the parameter~$\LC_S(\sigma=3)$ of the
admissible~$S(\xi;\sigma=3)$ profile,
see~\eqref{eq:ODE4s_parms_sigma=3}.

\section{Singular standing-ring solutions of the supercritical NLS}
\label{sec:NLS_standing}

Let us consider singular solutions of the supercritical NLS
\begin{equation}\label{eq:NLS}
    i\psi_t(t,r)+\psi_{rr}+\frac{d-1}{r}\psi_r+|\psi|^{2\sigma}\psi=0,
    \qquad d>1,\qquad \sigma d>2.
\end{equation}
In this Section we show that equation~\eqref{eq:NLS} has singular
standing-ring solutions for $\sigma\ge2$. Since the case $\sigma=2$
was already studied in~\cite{SC_rings-07,Raphael-06,Raphael-08}, we mainly focus
on the case $\sigma>2$.

\subsection{Analysis}\label{sec:theory_for_NLS_standing}

The following Lemma shows that standing-ring blowup can only occur
for~$\sigma\geq2$:
\begin{lem} \label{lem:NLS_sigma_ge_2}
    Let~$\psi$ be a standing-ring singular solution of the NLS~\eqref{eq:NLS},
    i.e.,~$\psi\sim\psi_F$ for $\abs{r-\rmax}\le\rho_c\cdot L(t)$, where
    \begin{subequations}
    \begin{equation}\label{eq:abs_psiF}
    |\psi_F(t,r)|=\frac{1}{L^{1/\sigma}(t)}|F\left(\rho\right)|,\qquad \rho=\frac{r-r_{max}(t)}{L(t)},\qquad \lim_{t\to T_c}L(t)=0,
    \end{equation}
    and
    \begin{equation}
        0< \lim_{t\to T_c}\rmax(t) < \infty.
   \end{equation}
    \end{subequations}
    Then,~$\sigma\ge2$.
    \begin{proof} The power of the collapsing part~$\psi_F$ is
    \begin{eqnarray*}
        \norm{\psi_F}_2^2 &=&
            L^{-2/\sigma}\int_{r=\rmax-\rho_c\cdot L(t)}^{\rmax+\rho_c\cdot L(t)}
            \abs{F\left( \frac{r-\rmax}{L} \right)}^2r^{d-1}dr \\
        &=&
            L^{-2/\sigma}\int_{\rho=-\rho_c}^{\rho_c}
            \abs{F(\rho)}^2(L\rho+\rmax)^{d-1}(Ld\rho) \\
                &\sim& L^{1-2/\sigma}(t) \cdot \rmax^{d-1}
                    \int_{\rho=-\rho_c}^{\rho_c} \abs{F(\rho)}^2d\rho.
    \end{eqnarray*}
    Since $
        \norm{\psi_F}_2^2
        \le \norm{\psi}_2^2
        = \norm{\psi_0}_2^2<\infty,
    $
    then $L^{1-2/\sigma}$ has to be bounded as $L\to0$, hence~$\sigma\ge2$.
    \end{proof}
\end{lem}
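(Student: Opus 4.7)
The plan is to exploit $L^2$ (power) conservation of the NLS and extract a sharp scaling law for the $L^2$ mass carried by the collapsing part $\psi_F$ inside the ring. Since $\psi$ solves \eqref{eq:NLS}, the quantity $\|\psi(t,\cdot)\|_2^2$ is independent of $t$, hence bounded. I would first isolate the ring region $|r-\rmax(t)| \le \rho_c L(t)$, on which the assumption $\psi \sim \psi_F$ is in force, and compute the contribution of $\psi_F$ to the total $L^2$ norm there. This automatically gives a lower bound for $\|\psi(t,\cdot)\|_2^2$, so whatever scaling in $L$ we extract must remain bounded as $t \to T_c$.

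The computation is a straightforward change of variables $\rho=(r-\rmax(t))/L(t)$, using the radial $L^2$ measure $r^{d-1}\,dr$. Substituting \eqref{eq:abs_psiF}, the factor $L^{-2/\sigma}$ from the self-similar amplitude pulls out, the Jacobian $dr = L\,d\rho$ contributes one factor of $L$, and the radial weight becomes $(\rmax + L\rho)^{d-1}$. This is the key place where the \emph{standing-ring} hypothesis enters: since $\rmax(t)$ has a positive finite limit and $L\rho$ is uniformly small on the integration interval, one has $(\rmax+L\rho)^{d-1} \sim \rmax^{d-1}$ with $\rmax^{d-1}$ bounded above and below by positive constants as $t \to T_c$. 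Combining everything yields
\[
\|\psi_F\|_2^2 \sim C\, L^{1-2/\sigma}(t), \qquad C=\rmax^{d-1}\int_{-\rho_c}^{\rho_c}|F(\rho)|^2\,d\rho > 0.
\]

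Boundedness of $\|\psi\|_2^2$ then forces $L^{1-2/\sigma}(t)$ to stay bounded as $L \to 0^+$, which requires the exponent $1-2/\sigma$ to be nonnegative, i.e.\ $\sigma \ge 2$. I expect no real obstacle here; the whole argument is a one-shot power-conservation estimate. The only subtle point worth highlighting is why the argument is genuinely specific to \emph{standing} rings and fails for shrinking rings: if $\rmax(t) \to 0$, the factor $\rmax^{d-1}$ can tend to zero and compensate a negative power of $L$, which is exactly what permits the shrinking-ring solutions in regimes B and C of Figure~\ref{fig:NLS_class_diagram}. Thus the proof should be written so that the standing-ring assumption is invoked precisely at the step where $(\rmax+L\rho)^{d-1}$ is replaced by a positive constant.
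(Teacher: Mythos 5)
Your proposal is correct and is essentially identical to the paper's own proof: the same power-conservation argument, the same change of variables $\rho=(r-\rmax)/L$, and the same use of the standing-ring hypothesis to replace $(\rmax+L\rho)^{d-1}$ by a positive constant, yielding $\|\psi_F\|_2^2\sim C\,L^{1-2/\sigma}$ and hence $\sigma\ge2$. Your closing remark on why the argument breaks down for shrinking rings is a useful observation but does not change the substance of the argument.
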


Let
\[
P_{\rm collapse}=\liminf_{\varepsilon\to 0+}\lim_{t\to
T_c}\int_{|r-r_{max}(t)|<\varepsilon}|\psi|^2r^{d-1}dr
\]
be the amount of power that collapses into the standing-ring
singularity. We say that a singular standing-ring solution~$\psi$
undergoes a {\em strong collapse} if~$P_{\rm collapse}>0$, and a {\em weak
collapse} if~$P_{\rm collapse}=0$.

\begin{cor} \label{cor:standing_NLS_strength}
Under the conditions of Lemma~\ref{lem:NLS_sigma_ge_2},~$\psi_F$
undergoes a strong collapse when~$\sigma=2$, and a weak collapse
when~$\sigma>2$.
\end{cor}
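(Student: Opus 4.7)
The plan is to refine the mass computation from Lemma~\ref{lem:NLS_sigma_ge_2} and read off the two cases from the sign of the exponent~$1-2/\sigma$. First I would repeat the calculation in the proof of Lemma~\ref{lem:NLS_sigma_ge_2}, now on a general ring-width band~$\{|r-\rmax(t)|\le\rho_c L(t)\}$: for any fixed~$\rho_c>0$,
\begin{equation*}
   \int_{|r-\rmax(t)|<\rho_c L(t)}\abs{\psi_F}^2\,r^{d-1}\,dr
   \;\sim\;
   L^{1-2/\sigma}(t)\;\rmax^{d-1}(t)\int_{-\rho_c}^{\rho_c}\abs{F(\rho)}^2\,d\rho,
   \qquad t\to T_c.
\end{equation*}
Since~$\rmax(t)\to r^{\ast}\in(0,\infty)$ in the standing-ring regime, the time dependence on the right-hand side is entirely captured by the prefactor~$L^{1-2/\sigma}(t)$.

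For the case~$\sigma=2$ (strong collapse), the exponent vanishes, and the right-hand side tends to the positive constant $(r^{\ast})^{d-1}\int_{-\rho_c}^{\rho_c}\abs{F}^2\,d\rho$. For any fixed~$\varepsilon>0$, since $L(t)\to 0$, the band~$\{|r-\rmax(t)|<\rho_c L(t)\}$ is eventually contained in~$\{|r-\rmax(t)|<\varepsilon\}$, so
\begin{equation*}
   \lim_{t\to T_c}\int_{|r-\rmax(t)|<\varepsilon}\abs{\psi}^2\,r^{d-1}\,dr
   \;\ge\;(r^{\ast})^{d-1}\int_{-\rho_c}^{\rho_c}\abs{F(\rho)}^2\,d\rho.
\end{equation*}
This lower bound is independent of~$\varepsilon$; sending $\rho_c\to\infty$ (the integral is finite because~$F\in L^2(\Real)$) gives $P_{\rm collapse}\ge (r^{\ast})^{d-1}\norm{F}_{L^2(\Real)}^2>0$.

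For the case~$\sigma>2$ (weak collapse), $1-2/\sigma>0$, so for each fixed~$\rho_c$ the self-similar ring mass vanishes as $t\to T_c$. To handle the remaining ``outer tail'' $\{\rho_c L(t)<|r-\rmax(t)|<\varepsilon\}$, I would split $\psi=\psi_F+\psi_{\rm reg}$ and use that $\psi_{\rm reg}$ remains regular and locally bounded in~$L^2$ near $r=r^{\ast}$. The tail mass is then controlled by the mass of a non-singular limit profile on the interval $|r-r^{\ast}|<\varepsilon$, which by absolute continuity of the Lebesgue integral vanishes as $\varepsilon\to 0$. Combining these two estimates gives $P_{\rm collapse}=0$.

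The hard part will be the weak-collapse direction: one needs to know that outside the ring-width scale~$L(t)$ the radial density~$\abs{\psi}^2 r^{d-1}$ is dominated by a non-concentrating background. This is natural because the $L^2$ mass is conserved while only the $H^1$ norm blows up, but a fully rigorous treatment would require matching the inner self-similar description~$\psi\sim\psi_F$ to an outer asymptotic profile of~$\psi$, which is beyond the informal framework used here. The strong-collapse half is easier because the self-similar piece alone already supplies a uniform positive lower bound, with no need to control the tail.
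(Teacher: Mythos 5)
Your proposal is correct and takes essentially the same route as the paper: the corollary is read off from the sign of the exponent in the scaling $\norm{\psi_F}_2^2\sim L^{1-2/\sigma}\,\rmax^{d-1}\int|F|^2\,d\rho$ already established in the proof of Lemma~\ref{lem:NLS_sigma_ge_2}, which tends to a positive constant for $\sigma=2$ and to zero for $\sigma>2$. Your additional discussion of the outer tail in the weak-collapse case is a fair observation about what a rigorous treatment would require, but the paper works at the informal level where $P_{\rm collapse}$ is identified with the mass of the self-similar part $\psi_F$, so no such matching argument is attempted there.
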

\begin{proof} This follows directly from the proof of
Lemma~\ref{lem:NLS_sigma_ge_2}.\end{proof}

Let us further consider singular standing-ring solutions of the
NLS~(\ref{eq:NLS}). Following the analysis in~\cite[section
3.2]{SC_rings-07}, in the ring region of a standing-ring solution,
i.e., for~$r-\rmax=\mathcal{O}(L)$,
\[
    \left[ \psi_{rr} \right] \sim\frac{[\psi]}{L^2(t)},\qquad
    \left[ \frac{d-1}r\psi_{r} \right]
        \sim \frac{(d-1)[\psi]}{\rmax(T_c)\cdot L(t)}.
\]
Therefore, the~$\frac{d-1}r\psi_{r}$ term in equation~\eqref{eq:NLS}
becomes negligible compared with~$\psi_{rr}$ as~$t\to T_c$.

    Hence, near the singularity, equation~\eqref{eq:NLS} reduces to the
    one-dimensional supercritical NLS~\eqref{eq:1D_SC_NLS}, i.e.,
    \[
        \psi(t,r)\sim \phi\left( t,x=r-\rmax(t) \right),
    \]
    where~$\phi$ is a peak-type solution of the one-dimensional
    NLS~\eqref{eq:1D_SC_NLS}.

Therefore, we predicted in~\cite{SC_rings-07} that the blowup
dynamics of standing ring solutions of the NLS~\eqref{eq:NLS} with
$d>1$ and~$\sigma=2$ is the same as the blowup dynamics of
collapsing peak solutions of the one-dimensional critical NLS
with~$\sigma=2$, as was indeed confirmed numerically
in~\cite{SC_rings-07} and analytically
in~\cite{Raphael-06,Raphael-08}. Similarly, we now predict that the
blowup dynamics of standing-ring solutions of the NLS~\eqref{eq:NLS}
with $d>1$ and~$\sigma>2$ is the same as the blowup dynamics of
collapsing peak solutions of the supercritical one-dimensional
NLS~\eqref{eq:1D_SC_NLS} with the same nonlinearity
exponent~$\sigma$:
\begin{conj}    \label{conj:NLS_ring_equals_peak}
Let~$\psi$ be a singular standing-ring solution of the NLS
equation~\eqref{eq:NLS}
    with~$d>1$ and~$\sigma>2$.  Then,
    \begin{enumerate}
        \item The solution
            is self-similar in the ring region, i.e.,
            $|\psi|\sim|\psi_F|$ for $r-\rmax=\mathcal{O}(L)$,
            where~$\psi_F$ is given by~\eqref{eq:abs_psiF}.
        \item The self-similar profile~$\psi_F$ is given by
            \begin{equation}\label{eq:psiF}
            \psi_F(t,r)= \phi_S\left( t,x=r-\rmax(t) \right),
            \end{equation}
            where~$\phi_S(t,x)$, see~(\ref{eq:psiS}), is the
            asymptotic peak-type profile of the one-dimensional
            NLS~(\ref{eq:1D_SC_NLS}) with the same~$\sigma$.
            In particular, $F=S$ is the admissible solution of
            equation~\eqref{eq:ODE4S} with
            \[
                \LC=\LC_S(\sigma),\qquad S_0=S_0(\sigma).
            \]
        \item The blowup rate of~$\psi$ is a square root, i.e.,
            \begin{equation}\label{eq:NLS_conj_blowup_rate}
                L(t)\sim \LC_S(\sigma)\sqrt{T_c-t}, \qquad t\longrightarrow T_c,
            \end{equation}
            where~$\LC_S(\sigma)$,is the parameter~$\LC=\LC_S$ of the
            self-similar profile~$S$, see~\eqref{eq:unique_parms_ODE4S}.
    \end{enumerate}
\end{conj}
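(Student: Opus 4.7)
The plan is to make the authors' informal reduction argument quantitative by rescaling around the moving ring peak and showing that the rescaled equation converges to the one-dimensional profile equation~\eqref{eq:ODE4S}. Introduce the self-similar variables
\begin{equation*}
\xi = \frac{r - \rmax(t)}{L(t)}, \qquad \tau = \int_0^t \frac{ds}{L^2(s)}, \qquad \psi(t,r) = \frac{1}{L^{1/\sigma}(t)}\, V(\tau,\xi)\, e^{i\tau + i\frac{L_t}{4L}(r-\rmax(t))^2},
\end{equation*}
and substitute into the NLS~\eqref{eq:NLS}. The $\psi_t$ and $\psi_{rr}$ terms produce the standard quasi-self-similar equation for $V$, so the residual part of the equation comes entirely from (i) the geometric Laplacian contribution $\frac{d-1}{r}\psi_r$ and (ii) the drift $\dot{\rmax}(t)\,\psi_r$ induced by the motion of the ring peak.

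The second step is to estimate these two residual pieces. For $|\xi|\le \rho_c$ one has $r = \rmax(t) + L(t)\xi = \rmax(T_c) + o(1)$, so that
\begin{equation*}
\left|\frac{d-1}{r}\psi_r\right| \lesssim \frac{d-1}{\rmax(T_c)}\cdot\frac{|V_\xi|}{L^{1+1/\sigma}(t)}, \qquad |\psi_{rr}| \sim \frac{|V_{\xi\xi}|}{L^{2+1/\sigma}(t)},
\end{equation*}
giving a ratio of order $L(t)/\rmax(T_c) \to 0$, since by assumption $\rmax(T_c)\in(0,\infty)$. An analogous estimate, assuming $\dot{\rmax}(t) = o(1/L(t))$, controls the drift term. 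Passing to the formal limit $\tau\to\infty$, the equation for $V(\tau,\xi)$ collapses to the one-dimensional self-similar profile equation~\eqref{eq:ODE4S}, with the modulation identity $LL_t \to -\LC^2/2$ playing the role of the eigenvalue. Invoking the uniqueness of the admissible solution stated in~\eqref{eq:unique_parms_ODE4S} then forces $F \equiv S(\,\cdot\,;\sigma)$ and $\LC = \LC_S(\sigma)$, which integrates to the square-root blowup law~\eqref{eq:NLS_conj_blowup_rate}. This yields parts (1)--(3) of the conjecture simultaneously.

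The main obstacle is rigor: the reduction above is formal, and the ingredients needed to upgrade it to a proof are precisely those that are still missing from the supercritical theory even in one dimension (Section~\ref{sec:theory_for_1D_NLS_collapse}). One would need (a)~modulation-theoretic orthogonality conditions that rigorously define $L(t)$ and $\rmax(t)$; (b)~a coercive linearized energy around $S(\xi;\sigma)$ controlling the radiation in a suitable weighted norm, which is delicate because the spectrum of the linearization about a supercritical self-similar profile is not well understood; and (c)~a bootstrap ensuring $\rmax(t)$ stays bounded away from $0$ and $\infty$ and $\dot{\rmax}(t) L(t) \to 0$. For the critical case $\sigma=2$, the work of Rapha\"el~\cite{Raphael-06,Raphael-08} overcomes analogous obstacles, but the supercritical case requires qualitatively new inputs because even the 1D asymptotic profile $\phi_S$ is only known formally. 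For this reason the statement is formulated as a conjecture, with the numerical confirmation supplied in Section~\ref{sec:simulations_for_NLS_standing} serving as the supporting evidence.
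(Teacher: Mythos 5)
Your proposal follows essentially the same route as the paper: the statement is a conjecture, and the paper supports it only by the informal scaling argument that $\bigl[\tfrac{d-1}{r}\psi_r\bigr]/[\psi_{rr}] = \mathcal{O}(L/\rmax(T_c)) \to 0$ in the ring region (so the equation reduces to the one-dimensional NLS~\eqref{eq:1D_SC_NLS}), together with the numerical confirmation of Section~\ref{sec:simulations_for_NLS_standing}. Your rescaled formulation, the explicit treatment of the drift term $\dot{\rmax}\,\psi_r$, and your honest catalogue of the missing rigorous ingredients are a slightly more detailed rendering of the same argument, and correctly identify why the statement is left as a conjecture.
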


Conjecture~\ref{conj:NLS_ring_equals_peak} implies that the parameter~$\LC$ of
the blowup-rate~\eqref{eq:NLS_conj_blowup_rate} of $\psi$ is equal to the
parameter $\LC$ of the blowup rate~\eqref{eq:intro_square_root_blowuprate} of
$\phi$.
In particular, $\LC$ depends on the nonlinearity exponent~$\sigma$, but
is independent of the dimension~$d$ and of the initial
condition~$\psi_0$.

\subsection{Simulations}\label{sec:simulations_for_NLS_standing} We solve the NLS~\eqref{eq:NLS}
with~$d=2$ and~$\sigma=3$ for the initial
condition
\begin{equation}\label{eq:NLS_d=2_sigma=3_IC}
\psi_0=2e^{-2(r-5)^2},
\end{equation}
and observe that the solution blows-up with a ring profile. In
Figure~\ref{fig:NLS_d=2_sigma=3_standing_ring} we plot the ring
radius\[ \rmax(t)=\arg\max_r|\psi|\]as a function of the focusing
factor~$1/L(t)$, as the solution blows up over 10 orders of
magnitude. Since~$\lim_{t\to T_c} r_{max}(t) = 5.0011$, the ring is
standing and is not shrinking or expanding.
\begin{figure}
    \begin{center}
    \scalebox{.8}{\includegraphics{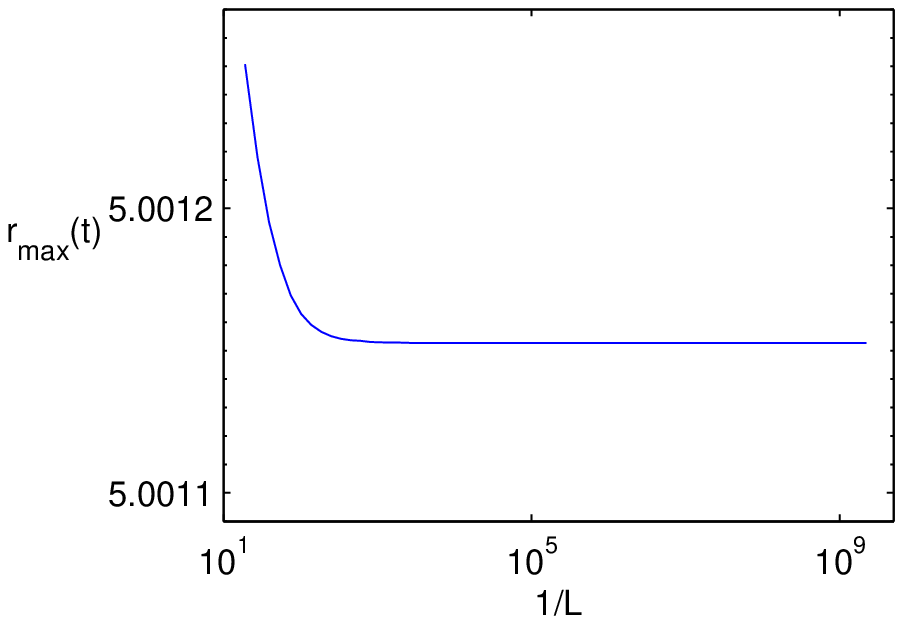}}
    \mycaption{
        Ring radius~$r_{max}(t)$ as a function of the focusing level~$1/L(t)$ for the
        solution of the NLS~(\ref{eq:NLS}) with~$d=2$ and~$\sigma=3$, and
        the initial condition~\eqref{eq:NLS_d=2_sigma=3_IC}.
    \label{fig:NLS_d=2_sigma=3_standing_ring}
    }
    \end{center}
\end{figure}

We now test Conjecture~\ref{conj:NLS_ring_equals_peak} numerically
item by item.
\begin{enumerate}
\item In Figure~\ref{fig:NLS_d=1_sigma=3}A we plot the rescaled solution
\begin{equation}\label{eq:normalization_psi}
    \psi_{\rm rescaled}=
        L^{\frac1\sigma}(t)
            \left| \psi \left(\frac{r-r_{max}(t)}{L(t)}\right)\right|,    \qquad
    L(t) = \left(
        \frac{S_0(\sigma)} {\|\psi(t)\|_\infty}
    \right)^\sigma,
\end{equation}
at~$1/L=10^4$ and~$1/L=10^8$, and observe that the two lines are
indistinguishable. Therefore, we conclude that the standing-ring
solutions blowup with the self-similar $\psi_F$
profile~\eqref{eq:abs_psiF}.
\item To verify that the self-similar blowup profile~$\psi_F$ is, up to a shift in~$r$,
the asymptotic blowup peak-profile~$\phi_S$ of the
one-dimensional NLS~(\ref{eq:1D_SC_NLS}), we superimpose in
Figure~\ref{fig:NLS_d=2_sigma=3}A the self-similar profile of
the solution of the one-dimensional NLS~(\ref{eq:1D_SC_NLS})
from Figure~\ref{fig:NLS_d=1_sigma=3}A and the admissible
solution~$S(x,\sigma=3)$, and observe that, indeed, the four
curves are indistinguishable.
\item Figure~\ref{fig:NLS_d=2_sigma=3}B shows
that
\[
L(t)\sim 1.714\cdot(T_c-t)^{0.5009}.
\]
Therefore, the blowup rate is a square-root or slightly faster.
Figure~\ref{fig:NLS_d=2_sigma=3}C shows that
$
{\displaystyle\lim_{T_c\to t}}LL_t\approx-1.385,
$
indicating that the blowup rate is square-root (with no loglog
correction), i.e.,
\begin{equation}    \label{eq:standing_numeric_BU_rate}
    L(t)\sim \LC^{\rm blowup-rate}_{2D}\sqrt{T_c-t},\qquad
    \LC^{\rm blowup-rate}_{2D}=\sqrt{2\cdot1.385}\approx  1.664.
\end{equation}
Thus, there is an excellent match between the
parameter~$\LC=\LC_S(\sigma=3)\approx  1.664$ of the admissible
S~profile, see~(\ref{eq:ODE4s_parms_sigma=3}), the value
of~$\LC^{\rm blowup-rate}_{2D}\approx  1.664$ extracted from the
blowup rate of the solution of the two-dimensional NLS, and the
value of~$\LC^{\rm blowup-rate}_{1D}\approx  1.664$ extracted from
the blowup rate of the solution of the one-dimensional NLS,
see~\eqref{eq:kappa_blowup_1D}.
\end{enumerate}
\begin{figure}
    \begin{center}
    \scalebox{.8}{\includegraphics{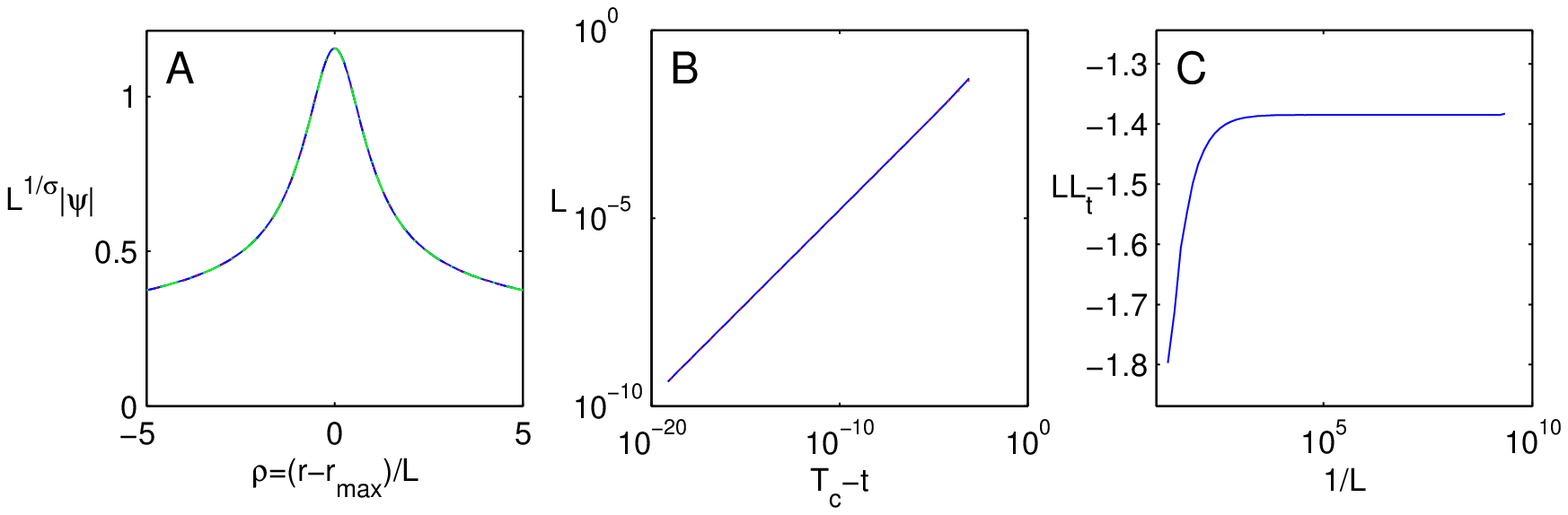}}
    \mycaption{
        NLS solution of Figure~\ref{fig:NLS_d=2_sigma=3_standing_ring}.
        A:~Rescaled solution according to~(\ref{eq:normalization_psi}) at focusing
        levels $1/L=10^4$ (solid) and~$1/L=10^8$ (dashed), dotted curve is the asymptotic
        profile~$S(\xi,\sigma=3)$, and the dashed curve is the rescaled solution of the one-dimensional NLS at~$1/L=10^8$, taken from
        Figure~\ref{fig:NLS_d=1_sigma=3}A.  All four curves are indistinguishable.
        B:~$L$ as a function of~$(T_c-t)$ on a logarithmic scale.
        Dotted curve is the fitted curve
        $1.709( T_c-t)^{0.5007}$.
        C:~$LL_t$ as a function of $1/L$.
        \label{fig:NLS_d=2_sigma=3}
    }
    \end{center}
\end{figure}

\subsection{Robustness of~$\psi_F$ and universality
of~$\kappa$}\label{sec:robustness} The initial
condition~\eqref{eq:NLS_d=2_sigma=3_IC} in
Figures~\ref{fig:NLS_d=2_sigma=3_standing_ring}
and~\ref{fig:NLS_d=2_sigma=3} is different from the asymptotic
profile~$\psi_F$. Since the solution $\psi$ blows up with the
asymptotic profile~$\psi_F$, this indicates that $\psi_F$ is an
attractor. The initial condition~\eqref{eq:NLS_d=2_sigma=3_IC},
however, is already ring-shaped. Therefore, we now show that initial
conditions which are not ring-shaped can also blowup with
the~$\psi_F$ profile.

In~\cite{NGO-08,SG_beams_collapse-06}, we developed a nonlinear
Geometrical Optics (NGO) method which showed that high-power
super-Gaussian initial conditions evolve into a ring profile. To see
this, in Figure~\ref{fig:SG_peak2ring} we solve the
NLS~\eqref{eq:intro_NLS} with~$d=2$ and~$\sigma=3$, and the
super-Gaussian initial condition~$\left.\psi_0(r)=2e^{-r^4}\right.$,
and observe that the NLS solution, indeed, evolves into a ring.
\begin{figure}
    \begin{center}
    \scalebox{.8}{\includegraphics{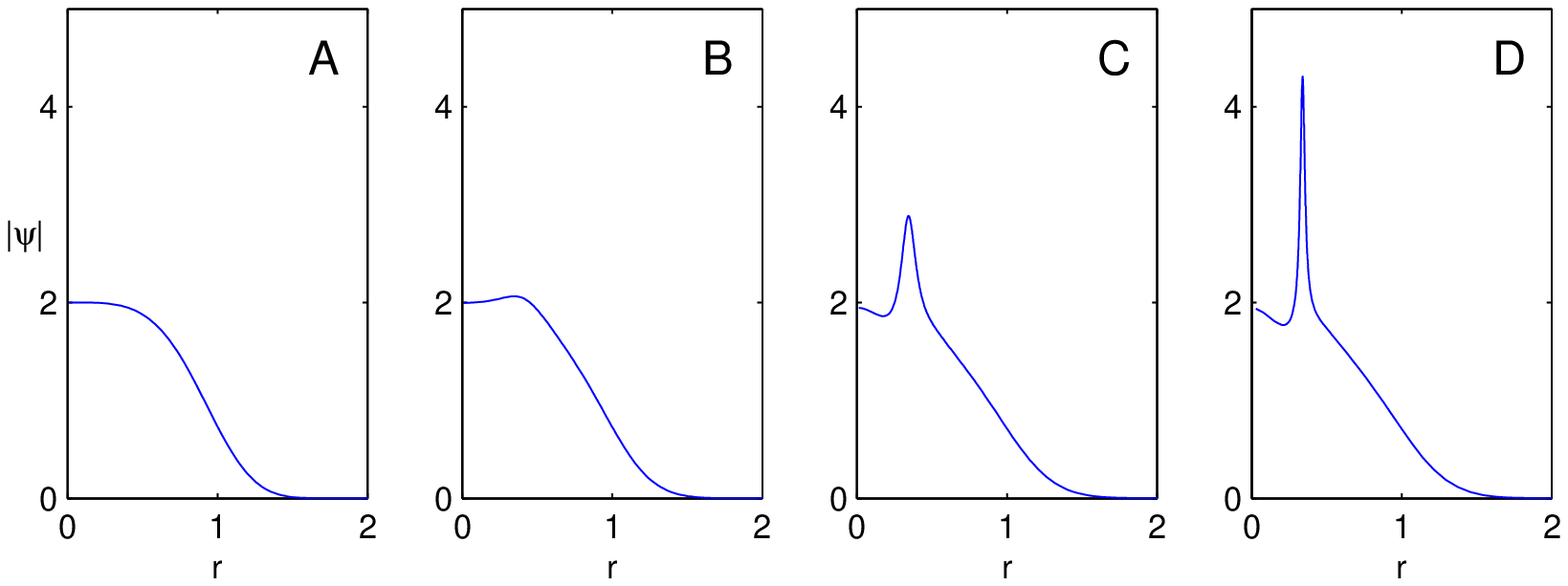}}
    \mycaption{
        Solution of the NLS~(\ref{eq:NLS}) with~$d=2$ and~$\sigma=3$ and
        the initial condition~$\psi_0=2\,e^{-r^4}$ at
        A:~$t=0$. B:~$t=0.0105$. C:~$t=0.0188$. D:~$t=0.0198$.
        \label{fig:SG_peak2ring}
    }
    \end{center}
\end{figure}
Since~$\left.{\displaystyle\lim_{t\to T_c}}r_{max}(t)=0.33\right.$,
see Figure~\ref{fig:NLS_d=2_sigma=3_SG}A, this singular solution is
a standing ring.  Therefore, we see that initial conditions which
are not rings can also blowup with the~$\psi_F$ standing ring
profile.

We now consider the blowup rate of the above solution, since~$\left.
    {\displaystyle \lim_{t\to T_c}}LL_t = -1.384\right.
$, see Figure~\ref{fig:NLS_d=2_sigma=3_SG}B, this implies that \[
    L(t)\sim \LC_{2D}^{\rm blowup-rate} \sqrt{T_c-t},\qquad
    \LC_{2D}^{\rm blowup-rate} \approx \sqrt{2\cdot1.384} = 1.664.
\]
This value of~$\LC_{2D}^{\rm blowup-rate}$ identifies with the one
obtained for the ring-type initial
condition~\eqref{eq:NLS_d=2_sigma=3_IC},
see~\eqref{eq:standing_numeric_BU_rate}.  We thus see that the
parameter $\LC$ of the blowup rate~\eqref{eq:NLS_conj_blowup_rate}
is, indeed, independent of the initial condition.
\begin{figure}
    \begin{center}
    \scalebox{.8}{\includegraphics{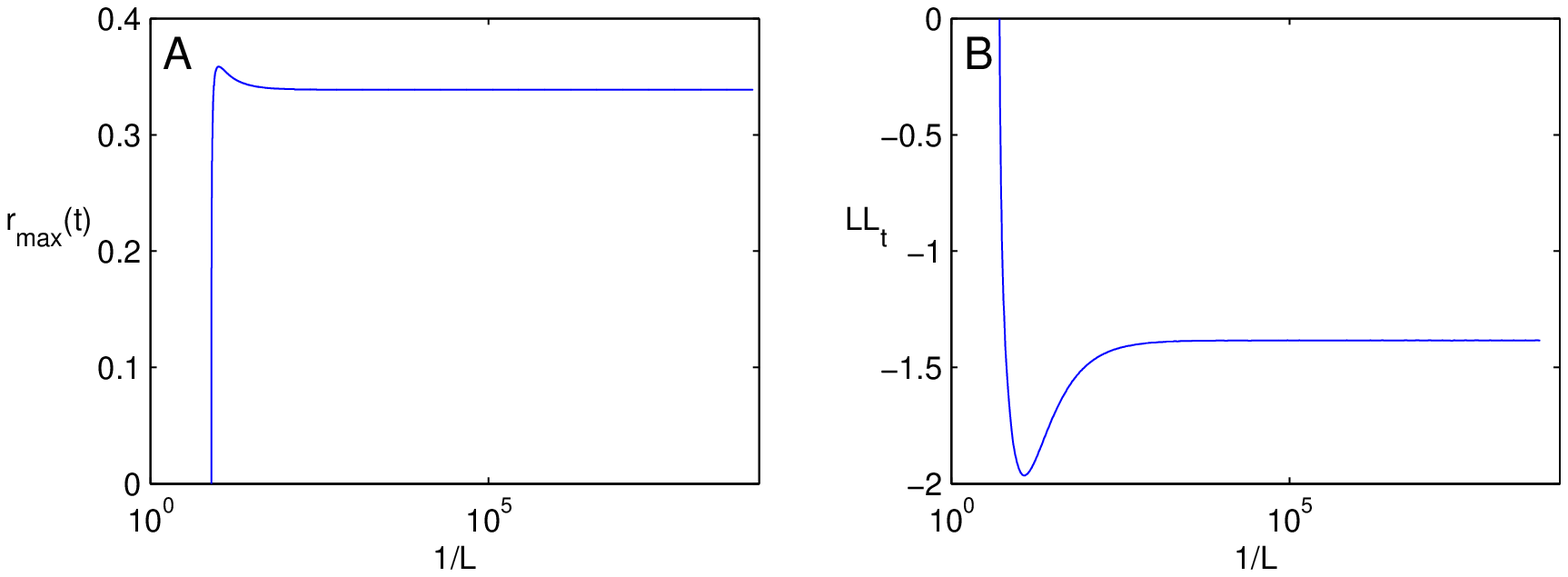}}
    \mycaption{
        NLS solution of Figure~\ref{fig:SG_peak2ring}.
        A:~Location of maximum~$r_{max}(t)$ as a function of the focusing level~$1/L(t)$
        B:~$LL_t$ as a function of the focusing level~$1/L(t)$.
        \label{fig:NLS_d=2_sigma=3_SG}
    }
    \end{center}
\end{figure}

\begin{rmk}
A different type of initial condition that blows-up with
the~$\psi_F$ profile and with the same value of~$\LC_{2D}^{\rm
blowup-rate}$ is given in
Section~\ref{sec:non_standing_simulations}.
\end{rmk}
\subsection{Instability with respect to
symmetry-breaking
perturbations}\label{sec:instablilty_symmetry_breaking} In
Section~\ref{sec:robustness} we saw that the standing-ring
asymptotic profile~$\psi_F$ is an attractor for a large class of
radially-symmetric initial conditions.  In general, NLS solutions
with a ring structure are stable under radial perturbation, but
unstable under symmetry-breaking
perturbations~\cite{Gprofile-05,SC_rings-07,Vortex_rings-08}. We now
show that~$\psi_F$ is also unstable with respect to
symmetry-breaking perturbations. To see that, let us consider the
two-dimensional NLS
\begin{equation}\label{eq:polar_2D_NLS}
i\psi_t(t,r,\theta)+\psi_{rr}+\frac1r\psi_r+\frac1{r^2}\psi_{\theta\theta}+|\psi|^{2\sigma}\psi=0,
\end{equation}
with the initial condition
\[
\psi_0(r,\theta)=f(r)\left(1+\varepsilon h(\theta)\right).
\]
We chose~$f(r)$ so that when~$\varepsilon=0$, the solution blows up
with the~$\psi_F$ profile at~$r=r_{max}$.  We now consider the
case~$0<\varepsilon\ll1$. Since for a standing ring
the~$\frac1r\psi_r$ term becomes negligible compared
with~$\psi_{rr}$, see Section~\ref{sec:theory_for_NLS_standing},
equation~\eqref{eq:polar_2D_NLS} can be approximated in the
ring-peak region~($r\approx r_{max}$) with
\[
i\psi_t+\psi_{rr}+\frac1{r_{max}^2}\psi_{\theta\theta}+|\psi|^{2\sigma}\psi=0.
\]
This is the two-dimensional focusing NLS. Therefore, the solution
will localize at local maximum points in the~$(r,\theta)$ plane,
thereby breaking the radial symmetry.

To see this numerically, we solve the two-dimensional
NLS~\eqref{eq:polar_2D_NLS} with~$\sigma=3$ and with the initial
condition
\begin{equation}\label{eq:non_radial_NLS_IC}
\psi_0(r,\theta)=2e^{-2(r-5)^2}\left[1+\varepsilon^2 e^{-\left(\frac\theta\varepsilon\right)^2}\right],\qquad \varepsilon=\frac1{10},\quad\theta=[-\pi,\pi].
\end{equation}
This initial condition is the standing-ring initial
condition~\eqref{eq:NLS_d=2_sigma=3_IC}, with an~$\mathcal{O}(0.01)$
small bump at~$\left.\theta=0\right.$, see
Figure~\ref{fig:azimuthal_instability}A. As predicted, as the
solution self-focuses, it localizes around the small initial bump
at~$\left.\theta=0\right.$ (see Figure
\ref{fig:azimuthal_instability}B and
\ref{fig:azimuthal_instability}C), resulting in breakup of radial
symmetry.
\begin{figure}
    \begin{center}
    \scalebox{.8}{\includegraphics{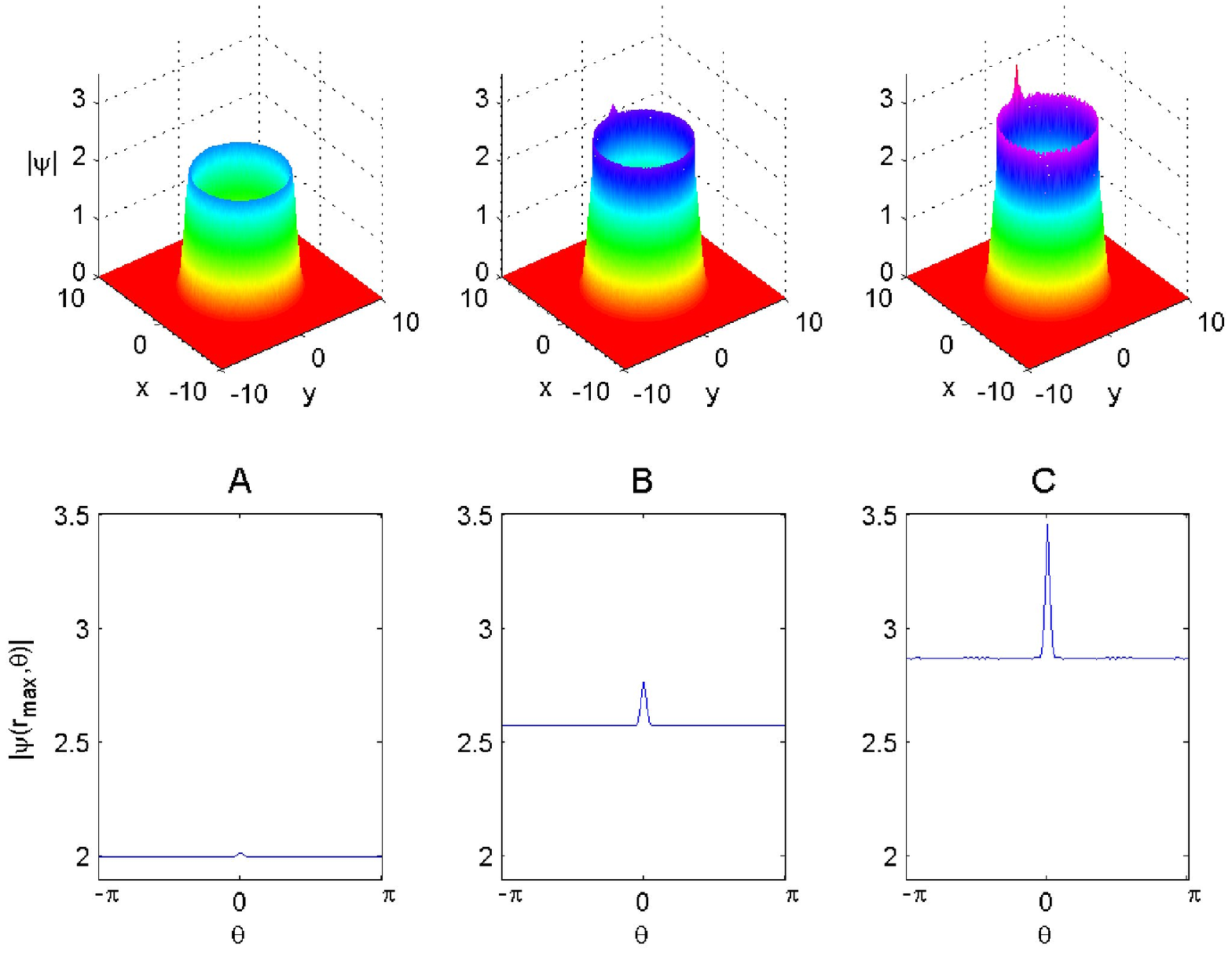}}
    \mycaption{
        Solution of the NLS~\eqref{eq:polar_2D_NLS} with~$\sigma=3$ and with the initial
        condition~\eqref{eq:non_radial_NLS_IC}.
        A:~$t=0$. B:~$t=0.01400$, C:~$t=0.01472$.  Top: Surface plot. Bottom: Amplitude along the ring peak~$|\psi(t,r_{max}(t),\theta)|$ as function of~$\theta$.
        \label{fig:azimuthal_instability}
    }
    \end{center}
\end{figure}
\section{Existence of non-standing ring solutions for~$\sigma>2$ ?}\label{sec:NLS_non-standing_rings}
Lemma~\ref{lem:NLS_sigma_ge_2} does not exclude the possibility that
there exist non-standing rings for~$\sigma>2$. The main reason that
this question arises is as follows. In~\cite{SC_rings-07} we
discovered ring solutions of the supercritical NLS for~$d>1$
and~$\frac2d<\sigma\le2$ of the form~$\psi\sim\psi_Q$,
see~\eqref{eq:psiQ}. Therefore, it is natural to attempt to
extrapolate these results to the regime~$\sigma>2$. Since~$\alpha<0$
when~$\sigma>2$, the ring radius~$r_{max}(t)$ goes to infinity
as~$t\to T_c$, hence~$\psi_Q$ is an {\em expanding-ring} profile for
$\sigma>2$, if such a solution exists. Note that although the ring
is expanding to an infinite radius, the power of the collapsing
part~$\psi_Q$ remains bounded, as
\[
    \norm{\psi_Q}_2^2 =
    \int_{r=\rmax-\rho_c\cdot L(t)}^{\rmax+\rho_c\cdot L(t)}
        |\psi_Q|^2r^{d-1}dr
        \sim r_0^{d-1}\int_{\rho=-\rho_c}^{\rho_c} |Q|^2 d\rho,
        \qquad t\to T_c.
\]
Therefore, these expanding rings, if they exist, do not violate power conservation.

In~\cite{SC_rings-07} we solved the NLS~\eqref{eq:NLS} with
$\sigma=2.1>2$ and~$d=2$ and the ring initial
condition~$\left.\psi_0=\sqrt[4]{3} \sqrt{\sech(2(r - 5))}\right.$.
The solution turned out to be a singular standing ring, rather than
an expanding one. Moreover, the blowup profile was different
from~$\psi_Q$. In retrospect, this NLS solution was a standing-ring
with the~$\psi_F$ profile, see Section~\ref{sec:NLS_standing}.
Nevertheless, this still leaves open the question of whether there
exist expanding-ring~$\psi_Q$ solutions for $\sigma>2$.

\subsection{Analysis}
We now prove that singular ring solutions with the~$\psi_Q$ profile
do not exist for~$\sigma>2$.
\begin{lem} \label{lem:no_expansion}
When~$\sigma>2$, there are no singular NLS solution such
that~$\psi\sim\psi_Q$, see~\eqref{eq:psiQ}, and
\begin{equation}\label{eq:blowup_rate_assumption}
L(t)\sim \LC(T_c-t)^p,\qquad L_t\sim p\,\LC(T_c-t)^{p-1},\qquad L_{tt}\sim
p(p-1)\,\LC(T_c-t)^{p-2}.
\end{equation}
\end{lem}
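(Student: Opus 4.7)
The plan is to substitute the ansatz $\psi \sim \psi_Q$ directly into the NLS~\eqref{eq:NLS} and show that the self-consistency conditions cannot all be simultaneously satisfied when $\sigma > 2$. First I would write $\psi_Q(t,r) = L^{-1/\sigma}(t)\,Q(\rho)\,e^{i\Theta(t,r)}$ with $\rho$ and $\Theta$ as in~\eqref{eq:psiQ}, compute $\psi_t$, $\psi_r$, $\psi_{rr}$ term by term, and substitute. The assumed power-law blowup rate~\eqref{eq:blowup_rate_assumption} gives $\mu := L_t/L \sim -p/(T_c-t)$, and $r_{\max} = r_0 L^\alpha$ together with $\alpha = (2-\sigma)/(\sigma(d-1)) < 0$ forces $r_{\max} \to \infty$ and $p = 1/(1+\alpha)$. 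A convenient intermediate computation is $\Theta_r = \alpha\mu\,r_{\max}/2 + \mu L\rho/2$ and $\dot r_{\max} = \alpha\mu r_{\max}$.

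Next I would separate the resulting equation into real and imaginary parts. Using $p(1+\alpha) = 1$, the quantities $L^2\Theta_r^2$ and $L^2\Theta_t$ tend to well-defined nonzero constants depending on $b := r_0^2 \kappa^{2+2\alpha}/4$. The leading-order imaginary-part equation should collapse to the algebraic identity
$$\frac{1}{2}\left(1-\frac{2}{\sigma}\right) + \frac{\alpha(d-1)}{2} = 0,$$
which simply reproduces~\eqref{eq:psiQ_alpha}. The leading-order real-part equation then reduces to an ODE of the form $Q'' - \lambda Q + |Q|^{2\sigma}Q = 0$, where $\lambda = 1 - \alpha p^2 b$. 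So far, everything goes through for any sign of $\alpha$, and the obstruction has not yet appeared.

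The contradiction for $\sigma>2$ should emerge at the next asymptotic order. The convective term $(d-1)r^{-1}\psi_r$, expanded around $r=r_{\max}$, is of relative order $L/r_{\max} = L^{1-\alpha}/r_0$ and drives a source for the first correction $Q_1$ to the leading profile $Q_0$. The Fredholm solvability condition for $Q_1$ - orthogonality of the source to the translational zero-mode $Q_0'$ - produces a sign-definite relation between $r_0^2$, $\kappa$, $\alpha$, and $\sigma$. For shrinking rings $(\alpha>0)$ this fixes $r_0$ consistently, recovering the picture of case~C. For $\alpha<0$, the sign of the resulting expression for $r_0^2$ flips and no admissible real $r_0$ can be chosen, yielding the contradiction.

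The main obstacle is the asymptotic bookkeeping: the sign of $\alpha$ rearranges the relative ordering of the competing scales $L$, $L^{1-\alpha}$, and $(T_c-t)^{2p-1}$, so one must carefully identify the exact order at which the obstruction appears and verify that no subleading term coming from $L^2\Theta_t$, $L^2\Theta_r^2$, or the nonlinear coupling cancels it. I expect the cleanest final form of the contradiction to be an identity of the shape $r_0^2 = C(\sigma,d)$ where $C$ is positive for $\sigma<2$, vanishes at $\sigma=2$ (consistently with the standing-ring case D), and is negative for $\sigma>2$.
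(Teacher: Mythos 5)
Your substitution step coincides with the paper's starting point, but the argument does not close, and the step on which everything hinges is the one you do not carry out. The decisive claim---that a next-order Fredholm solvability condition produces a sign-definite identity of the form $r_0^2=C(\sigma,d)$ which becomes negative for $\alpha<0$---is asserted rather than computed, and there is concrete reason to doubt it: in the shrinking-ring regime $2/d<\sigma<2$ (case~C) the parameter $r_0$ is not universal but depends on the initial condition, so a solvability condition that pins $r_0^2$ to a constant $C(\sigma,d)$ would contradict case~C rather than ``recover'' it. Moreover, a purely formal expansion of the ansatz, with no a priori information about genuine singular NLS solutions, can at best exhibit an inconsistency in the profile equations; you have not exhibited one, and you explicitly defer the ``asymptotic bookkeeping'' that would be the entire content of the proof.

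More importantly, you walk past the obstruction that is already sitting in your own leading-order computation. You note that consistency of the time-dependent coefficients forces $p(1+\alpha)=1$; since $\alpha<0$ for $\sigma>2$ and $p>0$, this gives $p=1/(1+\alpha)>1$, i.e.\ a super-linear blowup rate. That is already fatal, and it is exactly how the paper argues (Lemmas~\ref{lem:p_upper_bound} and~\ref{lem:p_lower_bound}): requiring the coefficient $C(t)=\frac14L^{1+2\alpha}L_{tt}-(1-\alpha)L^{2\alpha}L_t^2\sim c\,(T_c-t)^{2p(1+\alpha)-2}$, with $c\neq0$ when $\alpha<0$, to remain bounded yields $p>1$; while Merle's a priori bound $\int_0^{T_c}(T_c-t)\|\nabla\psi\|_2^2\,dt<\infty$, combined with Hamiltonian conservation and the ansatz (which give $\|\nabla\psi\|_2^2\sim L^{-2}\sim(T_c-t)^{-2p}$), yields $p<1$. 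No second-order expansion or solvability analysis is needed. To repair your proof you must either import such an a priori estimate on true singular solutions---at which point the contradiction already follows from your leading-order relation $p=1/(1+\alpha)>1$---or actually perform and sign-check the next-order computation you only sketch.
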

\begin{proof}
The result shall follow directly from Lemmas~\ref{lem:p_upper_bound}
and~\ref{lem:p_lower_bound}.
\end{proof}

\begin{lem}\label{lem:p_upper_bound}
    Under the assumptions of Lemma~\ref{lem:no_expansion}, $p<1$.
\end{lem}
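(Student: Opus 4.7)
The plan is to exploit the fact that for $\sigma>2$, the $\psi_Q$ ansatz forces $\alpha=(2-\sigma)/(\sigma(d-1))<0$, and hence $r_{max}(t)=r_0L^\alpha(t)\to\infty$ as $L\to0$. This should make the first-order radial term $\frac{d-1}{r}\psi_r$ in~\eqref{eq:NLS} negligible in the ring region, reducing the equation locally to the one-dimensional supercritical NLS~\eqref{eq:1D_SC_NLS}, whose singular peak-type blowup rate was shown in Section~\ref{sec:1D_NLS_collapse} to be the square root $L(t)\sim\kappa_S(\sigma)\sqrt{T_c-t}$, i.e., $p=1/2<1$.

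Concretely, I would first verify that $|(d-1)\psi_r/r|\ll|\psi_{rr}|$ throughout the ring region $|r-r_{max}|=O(L)$. Away from the peak, the amplitude piece $L^{-1/\sigma-1}Q'(\rho)$ dominates $\psi_r$, so the ratio is $O(L/r_{max})=O(L^{1-\alpha})$, which vanishes as $L\to0$ because $1-\alpha>1$. At the peak $\rho=0$, where $Q'(0)=0$ and $\psi_r$ is governed by the phase gradient $S_r(r_{max})=\alpha L_t r_{max}/(2L)$, the analogous estimate uses the assumed asymptotics of $L$ and $L_t$ to reach the same conclusion. This justifies dropping $(d-1)\psi_r/r$ in the equation satisfied by $\psi$ near the ring.

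I would then rewrite the phase of $\psi_Q$ in the ring frame $x=r-r_{max}(t)$, noting that
\begin{equation*}
S=\tau+\frac{L_t}{4L}x^2+\frac{\alpha L_t r_{max}}{2L}x+\frac{\alpha L_t r_{max}^2}{4L}.
\end{equation*}
The quadratic-in-$x$ part matches the chirp $L_tx^2/(4L)$ of the one-dimensional profile $\phi_S$ from~\eqref{eq:psiS} exactly, the linear-in-$x$ part is precisely a Galilean boost with velocity $v(t)=\alpha L_t r_{max}/L$, and the remaining term is an $x$-independent phase. Since the one-dimensional NLS~\eqref{eq:1D_SC_NLS} is Galilean invariant, the boost does not affect the amplitude $|\psi|$ or its width $L(t)$. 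Hence the dynamics of $L(t)$ agrees with that of $\phi_S$, giving $L(t)\sim\kappa_S(\sigma)\sqrt{T_c-t}$ and therefore $p=1/2<1$.

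The main obstacle is the time-dependence of the boost velocity $v(t)$: replacing $v$ by a genuine Galilean symmetry introduces extra $\dot v(t)$ correction terms in the reduced one-dimensional equation. Under the assumed asymptotics $L\sim\kappa(T_c-t)^p$, $L_t\sim p\kappa(T_c-t)^{p-1}$, $L_{tt}\sim p(p-1)\kappa(T_c-t)^{p-2}$, these correction terms scale with strictly smaller blowup rates than $\psi_{xx}$ and $|\psi|^{2\sigma}\psi$, and can therefore be treated as perturbations of the one-dimensional self-similar dynamics. Combining this with Lemma~\ref{lem:p_lower_bound}, which (as I anticipate from the virial-type matching $\frac{d^2}{dt^2}\int r^2|\psi|^2\,dx\sim-\|\psi\|_{2\sigma+2}^{2\sigma+2}$) will force $p\geq 1/(1+\alpha)>1$, then yields the contradiction underlying Lemma~\ref{lem:no_expansion}.
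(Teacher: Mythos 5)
Your approach is genuinely different from the paper's, and it has a gap that I do not think can be repaired. The paper's proof is essentially static: it combines Merle's a priori bound \eqref{eq:Merle_bound}, conservation of the Hamiltonian (so that $\|\nabla\psi\|_2^2\sim\frac{1}{\sigma+1}\|\psi\|_{2\sigma+2}^{2\sigma+2}$), and a direct computation showing $\|\psi_Q\|_{2\sigma+2}^{2\sigma+2}\sim r_0^{d-1}L^{-2}$ (using only $\alpha<0$), whence $\|\nabla\psi\|_2^2\sim (T_c-t)^{-2p}$ and integrability forces $p<1$. You instead try to prove the much stronger statement $p=1/2$ by a dynamical reduction to the one-dimensional NLS, dropping $\frac{d-1}{r}\psi_r$ and gauging away the linear-in-$x$ phase as a ``Galilean boost.''

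The gap is in the step where you declare the corrections from the time-dependent boost and from the $x$-independent phase to be negligible perturbations. That claim is circular --- whether those terms are small depends on the value of $p$, which is exactly what is being determined --- and it is in fact false at the value $p=1/2$ you want to conclude. Substituting $\psi_Q$ into the NLS, as in \eqref{eq:QrawODE}, the constant-in-$\rho$ phase contribution enters through the coefficient $C(t)\sim c\,(T_c-t)^{2(1+\alpha)p-2}$ with $c\neq0$; since $\alpha<0$, at $p=1/2$ the exponent is $\alpha-1<0$, so this term \emph{diverges} relative to $Q_{\rho\rho}$ and $|Q|^{2\sigma}Q$ rather than being subdominant. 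The same happens with the boost: with $v(t)=\alpha L_t r_{max}/L$, the acceleration term generated by removing a time-dependent boost is of relative size $(T_c-t)^{(3+\alpha)p-2}$ in the ring region, which again blows up at $p=1/2$ for $\alpha<0$. (Indeed, it is precisely the requirement that $C(t)$ remain bounded that the paper uses in Lemma~\ref{lem:p_lower_bound} to get $p\ge 1/(1+\alpha)>1$; your perturbative dismissal of these terms is inconsistent with the hypotheses of Lemma~\ref{lem:no_expansion} themselves.) Moreover, Galilean invariance only holds for constant velocity, and concluding that the amplitude follows the one-dimensional $S$-profile dynamics would contradict the assumed $\psi_Q$ ansatz rather than derive a property of it. If you want a proof of $p<1$ that works under the stated assumptions, the route through Merle's bound \eqref{eq:Merle_bound} together with the Hamiltonian and the scaling $\|\psi_Q\|_{2\sigma+2}^{2\sigma+2}\sim L^{-2}$ is both shorter and avoids any reduction-of-dynamics argument.
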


\begin{proof}
We first recall that, as shown by Merle~\cite{Merle-89}, for every singular
solution $\psi$ of the supercritical NLS
\begin{equation} \label{eq:Merle_bound}
    \int_0^{T_c}(T_c-t)\|\nabla\psi\|_2^2\,dt<\infty.
\end{equation}
To find the limiting behavior of~$\|\nabla \psi(t)\|_2^2$ as~$t\to
T_c$, note that by the conservation of the Hamiltonian
\begin{equation}\label{eq:Hamiltonian_terms_magnitude}
    \|\nabla\psi\|^2_2 \sim
        \frac{1}{\sigma+1}
        \|\psi\|_{2\sigma+2}^{2\sigma+2},\qquad t\to T_c.
\end{equation}
In addition,
\begin{equation}\label{eq:magnitude_of_2sigma} \allowdisplaybreaks
    \|\psi\|_{2\sigma+2}^{2\sigma+2}
        \sim \|\psi_Q\|_{2\sigma+2}^{2\sigma+2}
        = \frac{1}{L^{\frac{2\sigma+2}\sigma}}
            \int |Q(\rho)|^{2\sigma+2}(L\rho+r_0L^\alpha)^{d-1}L\,d\rho
        \sim \frac{r_0^{d-1}}{1+\sigma}\frac{1}{L^2(t)}
            \int |Q|^{2\sigma+2}d\rho,
\end{equation}
where in the last equality we used the value of~$\alpha$ given
by~\eqref{eq:psiQ_alpha}, and, in particular, that~$\alpha<0$.
Therefore,
by~\eqref{eq:blowup_rate_assumption},~\eqref{eq:Hamiltonian_terms_magnitude},~\eqref{eq:magnitude_of_2sigma}
\[
\|\nabla\psi\|_2^2\sim \|\psi\|_{2\sigma+2}^{\sigma+1}\sim \frac{1}{L^2(t)}\sim \frac{1}{(T_c-t)^{2p}},\qquad
t\to T_c.
\]
Hence, the bound~\eqref{eq:Merle_bound} implies that~$p<1$.
\end{proof}

\begin{lem}\label{lem:p_lower_bound}
Under the conditions of Lemma~\ref{lem:no_expansion},~$p>1$.
\end{lem}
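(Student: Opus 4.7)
The plan is to proceed in parallel with the proof of Lemma~\ref{lem:p_upper_bound}, but replacing Merle's kinetic-energy estimate with the variance (virial) identity applied to the second moment $V(t):=\int_{\mathbb{R}^d}|x|^2|\psi|^2\,dx$. The idea is to compute $V_{tt}(t)$ both directly from the $\psi_Q$ ansatz and from the virial identity, and then match the leading $(T_c-t)$-powers on the two sides, forcing $p=1/(1+\alpha)>1$.

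First I would compute $V(t)$ directly from~\eqref{eq:psiQ}. Substituting $r=r_{\max}(t)+L(t)\rho$ in the ring region and using exactly the reduction from the proof of Lemma~\ref{lem:p_upper_bound}---the only difference being the additional factor $r^2\sim r_{\max}^2=r_0^2L^{2\alpha}(t)$, while all remaining $L$-exponents cancel thanks to~\eqref{eq:psiQ_alpha}---I obtain
\[
V(t)\sim C_0\,r_0^{d+1}\,L^{2\alpha}(t),\qquad C_0>0,
\]
which diverges as $t\to T_c$ since $\alpha<0$ when $\sigma>2$. Differentiating twice and invoking~\eqref{eq:blowup_rate_assumption} gives
\[
V_{tt}(t)\sim 2\alpha\,C_0\,r_0^{d+1}\,p\,(2\alpha p-1)\,\kappa^{2\alpha}\,(T_c-t)^{2\alpha p-2}.
\]
Independently, the virial identity for the NLS,
\[
V_{tt}(t)=8H[\psi_0]-\frac{4(d\sigma-2)}{\sigma+1}\,\|\psi(t)\|_{2\sigma+2}^{2\sigma+2},
\]
combined with the supercritical estimate $\|\psi\|_{2\sigma+2}^{2\sigma+2}\sim c\,L^{-2}$ already established in the proof of Lemma~\ref{lem:p_upper_bound}, yields
\[
V_{tt}(t)\sim -B\,(T_c-t)^{-2p},\qquad B>0.
\]

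Matching the two $(T_c-t)$-powers forces $2\alpha p-2=-2p$, i.e., $p=1/(1+\alpha)$. For $\sigma>2$ with $d\ge 2$ one has $\alpha=(2-\sigma)/(\sigma(d-1))\in(-1,0)$ (the lower bound $\alpha>-1$ reduces to $2>\sigma(2-d)$, automatic for $d\ge 2$), so $1+\alpha\in(0,1)$ and hence $p=1/(1+\alpha)>1$, as required.

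The main obstacle is the sign mismatch between the two expressions for $V_{tt}$: the direct computation produces a positive quantity whereas the virial identity yields a strictly negative one (since $d\sigma>2$ and $\|\psi\|_{2\sigma+2}^{2\sigma+2}\to\infty$). This signals that $V(t)$ cannot be exhausted by its leading ring contribution alone---subleading corrections carrying the dominant negative curvature must be present. A cleaner route that avoids the sign issue is to work with one-sided versions of the identities: the virial inequality $V_{tt}\le 8H[\psi_0]$ integrates to the uniform quadratic upper bound $V(t)\le V(t_0)+V_t(t_0)(t-t_0)+4H[\psi_0](t-t_0)^2$ on $[t_0,T_c)$, while the direct ansatz yields the lower bound $V(t)\gtrsim r_0^{d+1}L^{2\alpha}(t)$; comparing the two growth rates as $t\to T_c$ delivers $p\ge 1/(1+\alpha)>1$ without any sign ambiguity.
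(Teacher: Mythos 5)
Your main route does not establish the lemma, and you have identified the obstruction yourself: the ring-region ansatz gives a \emph{positive} $V_{tt}$ growing like $(T_c-t)^{2\alpha p-2}$, while the virial identity gives $V_{tt}=8H[\psi_0]-\frac{4(d\sigma-2)}{\sigma+1}\|\psi\|_{2\sigma+2}^{2\sigma+2}$, which under the estimate $\|\psi\|_{2\sigma+2}^{2\sigma+2}\sim cL^{-2}$ is \emph{negative} and of size $(T_c-t)^{-2p}$. When the two candidate leading behaviors have opposite signs, ``matching the exponents'' is not a legitimate step --- no choice of $p$ reconciles them --- so the relation $p=1/(1+\alpha)$ is not actually derived. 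The underlying flaw is that $\psi\sim\psi_Q$ is assumed only in the ring region $|r-r_{\max}|\lesssim L$, so the ansatz yields only a \emph{lower bound} on $V(t)$, and lower bounds cannot be differentiated twice; the negative curvature of $V$ must be carried by the part of the solution not described by $\psi_Q$. Your fallback one-sided argument is formally sound but proves something different from the statement: since $V_{tt}\le 8H[\psi_0]$ makes $V$ bounded on $[t_0,T_c)$ (as $T_c<\infty$), while the ring region alone contributes $V\gtrsim r_0^{d+1}L^{2\alpha}\to\infty$, you get an outright contradiction with the existence of an expanding $\psi_Q$ ring for \emph{any} $p>0$, not the inequality $p\ge 1/(1+\alpha)$. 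That does give Lemma~\ref{lem:no_expansion} directly (and the present lemma only vacuously), and it requires the additional hypothesis of finite variance, $\int |x|^2|\psi_0|^2\,dx<\infty$, which the paper does not assume; without it neither the virial identity nor $V(t_0)<\infty$ is available.

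For comparison, the paper's proof uses neither the variance nor Merle-type bounds: substituting $\psi_Q$ into the NLS yields the ODE~\eqref{eq:QrawODE} for $Q$ with time-dependent coefficients, and self-consistency (since $Q$ depends only on $\rho$) requires each coefficient to tend to a constant as $t\to T_c$. Under~\eqref{eq:blowup_rate_assumption} the coefficient $C(t)\sim c\,(T_c-t)^{2\alpha p+2p-2}$ with $c\neq0$ (this is where $\alpha<0$ from~\eqref{eq:psiQ_alpha} enters), so boundedness forces $2\alpha p+2p-2\ge0$, i.e.\ $(1-p)/p\le\alpha<0$, hence $p>1$. If you wish to keep your virial approach, the honest way to present it is as an independent, direct (formal) proof of Lemma~\ref{lem:no_expansion} for finite-variance data, bypassing the $p<1$ versus $p>1$ dichotomy, rather than as a proof of this lemma; as a proof of the stated inequality it has a genuine gap.
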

\begin{proof}Substitution of~$\psi_Q$, see~\eqref{eq:psiQ}, into the
NLS~\eqref{eq:NLS} gives the following ODE for~$Q$,
\begin{subequations}
\label{eq:QrawODE}
\begin{equation}
\label{eq:QrawODEa}Q_{\rho\rho}(\rho)+\frac{(d-1)L}{L\rho+r_0L^\alpha}Q_\rho-Q+|Q|^{2\sigma}Q-\left[A(t)\rho^2+\alpha r_0 B(t)\rho+\alpha r_0^2 C(t)\right]Q+iD(t)Q=0
,\end{equation} where
\begin{align*}
    & A(t)=\frac14L^3L_{tt}, &&
    B(t)=\frac12L^{2+\alpha}L_{tt}-2(1-\alpha)L^{1+\alpha}L_t^2, &\\
    & C(t)=\frac14L^{1+2\alpha}L_{tt}-(1-\alpha)L^{2\alpha}L_t^2, &&
    D(t)=\frac{\sigma d-2}{2\sigma}\frac{LL_t}{\rho+r_0L^{\alpha-1}}\rho.
\end{align*}
\end{subequations}

Since~$Q$ depends only on~$\rho$, each of the time-dependent terms
of~\eqref{eq:QrawODE} should go to a constant as~$t\to T_c$.  In
particular,~$C(t)$ should go the constant as~$t\to T_c$. Under
assumption~\eqref{eq:blowup_rate_assumption},
\begin{eqnarray}
&C(t)\sim c_C(T_c-t)^{2\alpha p+2p-2},\qquad
&c=\LC^{2+2\alpha}p\left[\left(\alpha-\frac34\right)p-\frac14\right].\label{eq:C_magnitude}
\end{eqnarray}
Since~$\lim_{t\to T_c}L(t)=0$, then~$p>0$,
see~\eqref{eq:blowup_rate_assumption}.  In addition,
for~$\sigma>2$,~$\alpha<0$, see~\eqref{eq:psiQ_alpha}.  This implies
that~$c<0$ and in particular~$c\ne0$. Since~$C(t)$ should goes to a
constant as~$t\to T_c$ then, by~\eqref{eq:C_magnitude},
$\left.2\alpha p+2p-2\ge0\right.$. Therefore,
\[
\frac{1-p}{p}\le \alpha<0.
\]
Hence,~$p>1$.
\end{proof}

\subsection{Simulations}\label{sec:non_standing_simulations}
The result of Lemma~\ref{lem:no_expansion} that expanding-ring
singular solutions with the profile~$\psi_Q$ do not exist, is based
on formal arguments rather than on a rigorous analysis. Therefore,
we now provide a numerical support for this result.  To do that, we
solve the NLS~\eqref{eq:NLS} with~$d=2$ and~$\sigma=3$ and with the
expanding ring profile initial condition
\begin{subequations}\label{eq:IC_expanding}
\begin{equation}
\psi_0=\psi_Q(t=0)=(1+\sigma)^{\frac1{2\sigma}}\mbox{sech}^{\frac1\sigma}(\sigma(r-10))e^{-i\alpha r^2-i(1-\alpha)(r-10)^2},
\end{equation}
where
\begin{equation}
\alpha = \frac{2-3}{3(2-1)}=-\frac13.
\end{equation}
\end{subequations}

If a~$\psi_Q$ solution indeed exists, then~$\psi$ would be a
singular ring solution whose radius goes to infinity. In
Figure~\ref{fig:NLS_d=2_sigma=3_expanding}A we plot the ring
radius~$r_{max}(t)$ as a function of the focusing factor~$1/L$, as
the solution blows up over 10 orders of magnitude. Initially, the
ring radius, indeed, expands from~$r_{max}(0)=10$
to~$r_{max}(t)\approx 12.11$.  This expansion is due to the
defocusing (expanding) phase term~$e^{-i\alpha r^2}$ of the initial
condition. However, the ring stops to expand when~$1/L\approx 20$,
and becomes a singular standing ring with
radius~$r_{max}(T_c)\approx 12.11$. Since the initial condition was
an expanding ring, this simulation provides a strong support to the
result of Lemma~\ref{lem:no_expansion}.

We now consider the blowup rate of the above solution,
Figure~\ref{fig:NLS_d=2_sigma=3_expanding}B shows that~$\left.
    {\displaystyle \lim_{t\to T_c}}LL_t = -1.384,\right.
$ implying that \[
    L(t)\sim \LC_{2D}^{\rm blowup-rate} \sqrt{T_c-t},\qquad
    \LC_{2D}^{\rm blowup-rate} \approx \sqrt{2\cdot1.384} = 1.664.
\]
This value of~$\LC_{2D}^{\rm blowup-rate}$ identifies with the one
obtained for a~$\psi_F$ collapse,
see~\eqref{eq:standing_numeric_BU_rate}.  Therefore, this
simulations provides an additional support to the robustness
of~$\psi_F$ and to the universality of~$\kappa$ (see
Section~\ref{sec:robustness}).
\begin{figure}
    \begin{center}
    \scalebox{.8}{\includegraphics{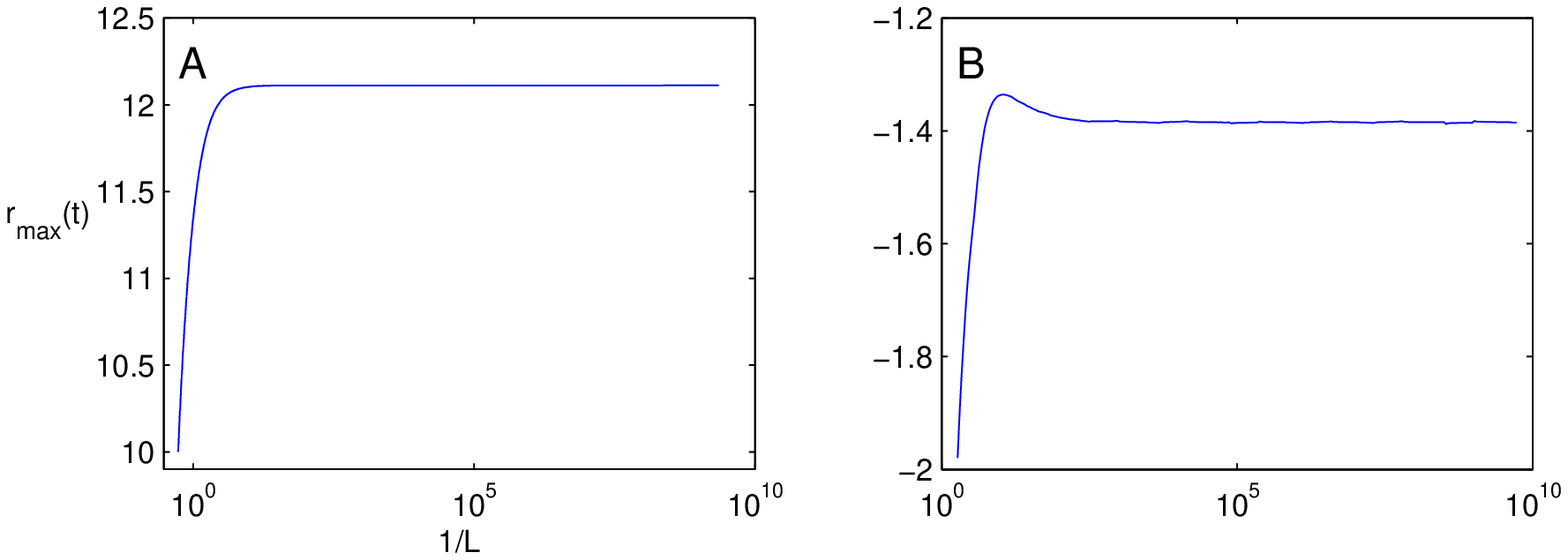}}
    \mycaption{
        Solution of the NLS~(\ref{eq:NLS}) with~$d=2$ and~$\sigma=3$ and
        the initial condition~\eqref{eq:IC_expanding}.
        A: Ring radius~$r_{max}(t)$ as a function of the focusing level~$1/L(t)$.
        B: $LL_t$ as a function of $1/L$.
    \label{fig:NLS_d=2_sigma=3_expanding}
    }
    \end{center}
\end{figure}

\section{Singular standing vortex solutions of the NLS
($\sigma>2$)}\label{sec:vortex} We now consider vortex solutions of
the two-dimensional NLS
\begin{equation}\label{eq:2DNLS}
i\psi_t(t,x,y)+\Delta \psi+|\psi|^{2\sigma}\psi=0,\qquad \psi(0,x,y)=\psi_0(x,y),\qquad \Delta=\partial_{xx}+\partial_{yy},
\end{equation}
i.e., solutions of the form
\begin{equation}\label{eq:vortex_form}
\psi(t,r,\theta)=A(t,r)e^{im\theta},\qquad m\in\mathbb{Z},
\end{equation}
where~$r=\sqrt{x^2+y^2}$ and~$\theta=\tan^{-1}(x/y)$.

In~\cite{Vortex_rings-08} we proved that if the initial condition is
a radially-symmetric vortex, then the solution remains a vortex:
\begin{lem} \label{lem:vortex_IC}
Let~$\psi$ be a solution of the NLS~(\ref{eq:2DNLS}) with the
initial
condition~$\left.\psi_0(r,\theta)=A_0(r)e^{im\theta}\right.$.
Then,~$\psi(t,r,\theta)=A(t,r)e^{im\theta}$, where~$A(t,r)$ is the
solution of
\begin{equation}
iA_t(t,r)+A_{rr}+\frac1rA_r-\frac{m^2}{r^2}A+|A|^{2\sigma}A=0,\qquad
A(0,r)=A_0(r). \label{eq:vortex_NLS}
\end{equation}
\end{lem}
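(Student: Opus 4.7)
The plan is to prove this by an ansatz-substitution argument combined with uniqueness of solutions of the Cauchy problem for the NLS~\eqref{eq:2DNLS}. The idea is to construct a candidate solution of the form $A(t,r)e^{im\theta}$ and then appeal to uniqueness to conclude that it must coincide with $\psi$.

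First I would write the 2D Laplacian in polar coordinates as
\[
\Delta = \partial_{rr} + \frac{1}{r}\partial_r + \frac{1}{r^2}\partial_{\theta\theta},
\]
and substitute the ansatz $\widetilde\psi(t,r,\theta) := A(t,r)e^{im\theta}$ into~\eqref{eq:2DNLS}, where $A$ is the solution of the radial vortex equation~\eqref{eq:vortex_NLS} with initial data $A_0(r)$. The factor $e^{im\theta}$ passes through $\partial_t$, $\partial_{rr}$ and $\frac{1}{r}\partial_r$ unchanged; the angular derivative produces $\partial_{\theta\theta}e^{im\theta} = -m^2 e^{im\theta}$; and the nonlinearity gives $|\widetilde\psi|^{2\sigma}\widetilde\psi = |A|^{2\sigma}A\, e^{im\theta}$. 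Factoring out $e^{im\theta}$ yields exactly equation~\eqref{eq:vortex_NLS} for $A$, so $\widetilde\psi$ is a solution of the 2D NLS with the prescribed initial data $A_0(r)e^{im\theta}$.

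Next, since the initial condition of $\psi$ is also $A_0(r)e^{im\theta}$, the uniqueness of solutions of the Cauchy problem for the NLS in the appropriate function space forces $\psi \equiv \widetilde\psi$, which is precisely the claimed form. Alternatively, one may phrase the same argument symmetry-theoretically: the NLS is invariant under rotations $R_\phi:(r,\theta)\mapsto(r,\theta+\phi)$, and the initial data transforms as $R_\phi\psi_0 = e^{im\phi}\psi_0$; by uniqueness the solution inherits the same covariance $R_\phi\psi(t) = e^{im\phi}\psi(t)$, which is equivalent to saying $\psi(t,r,\theta) = A(t,r)e^{im\theta}$ for some $A$, and substituting back into~\eqref{eq:2DNLS} then identifies $A$ as the solution of~\eqref{eq:vortex_NLS}.

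The main obstacle is that the uniqueness step rests on well-posedness of the Cauchy problem for the supercritical NLS in a space that contains the vortex initial data. Since standing-ring/vortex solutions of interest are smooth and localized away from $r=0$ (or are singular only at a single time $T_c$), local-in-time uniqueness in, say, $H^1$ (or $H^2$ when needed) is available from the standard well-posedness theory, and this suffices to run the argument on $[0,T_c)$. The verification that $A(t,r)e^{im\theta}$ lies in the appropriate space for all $t\in[0,T_c)$ is routine once $A$ is shown to be well-defined, and no further difficulty arises.
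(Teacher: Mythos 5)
Your argument is correct: the paper itself gives no proof of this lemma, deferring it to the cited reference~\cite{Vortex_rings-08}, and the standard proof there is exactly your substitution-plus-uniqueness argument (the phase $e^{im\theta}$ commutes with $\partial_t$, the radial derivatives, and the nonlinearity since $|e^{im\theta}|=1$, while $\partial_{\theta\theta}$ contributes the $-m^2/r^2$ term). Your rotational-covariance reformulation is an equivalent packaging of the same uniqueness step, so nothing further is needed.
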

Note that the phase singularity at $r=0$ implies that $A(r=0)=0$.
Hence, all vortex solutions are ring-type solutions.
Specifically, all the singular solutions of~\eqref{eq:vortex_NLS} are ring-typed and
not peak-typed.

In~\cite{Vortex_rings-08} we showed by formal calculations and
numerical simulations that equation~\eqref{eq:2DNLS} admits singular
shrinking-vortex solutions for~$1\le\sigma<2$, and singular
standing-vortex solutions for~$\sigma=2$. Moreover, we showed that
the blowup rate and profile of the standing-vortex solutions is the
same as in the two-dimensional non-vortex case. We now show that
this is also true for~$\sigma>2$, namely, that the analysis
conducted in Section~\ref{sec:theory_for_NLS_standing} for
non-vortex standing-ring collapse, applies also for singular
standing-vortex solutions.

\subsection{Analysis}
\begin{lem} \label{lem:NLS_sigma_ge_2_vortex}
    Let~$\psi$ be a singular standing-ring vortex solution of the NLS~\eqref{eq:2DNLS},
    i.e.,~$\psi\sim\psi_F(t,r)e^{im\theta}$, where
    $\psi_F$ is given by~\eqref{eq:abs_psiF}.
    Then,~$\sigma\ge2$.
    \begin{proof} The proof is identical to the proof of
    Lemma~\ref{lem:NLS_sigma_ge_2}.  Indeed, the proof of Lemma~\ref{lem:NLS_sigma_ge_2} relies only
on~$|\psi|$, hence is not affected by the phase term~$e^{im\theta}$.
\end{proof}
\end{lem}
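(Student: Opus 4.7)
The plan is to essentially replay the argument of Lemma~\ref{lem:NLS_sigma_ge_2}, observing that the vortex phase $e^{im\theta}$ is unimodular and therefore irrelevant to any $L^2$-norm bound. The strategy is to lower-bound the conserved power $\|\psi\|_2^2 = \|\psi_0\|_2^2 < \infty$ by the power in the ring region, then extract the dependence on $L(t)$ and force $\sigma \ge 2$.

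First, I would write out the $L^2$ norm of the collapsing part in two-dimensional polar coordinates, using the ansatz $\psi \sim \psi_F(t,r) e^{im\theta}$ on the ring region $|r-\rmax| \le \rho_c L(t)$. Since $|e^{im\theta}|=1$, the $\theta$-integration merely contributes a factor of $2\pi$, so
\[
\|\psi_F e^{im\theta}\|_2^2
= 2\pi \int_{\rmax-\rho_c L}^{\rmax+\rho_c L} |\psi_F(t,r)|^2 \, r\, dr.
\]
Substituting the profile~\eqref{eq:abs_psiF} and changing variables to $\rho = (r-\rmax)/L$ exactly reproduces the computation in the proof of Lemma~\ref{lem:NLS_sigma_ge_2}, now with the extra $2\pi$ factor and with the weight $(L\rho + \rmax)$ rather than $(L\rho+\rmax)^{d-1}$.

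Next, I would extract the leading asymptotics as $L \to 0$. Because $\rmax(t) \to r_\infty \in (0,\infty)$ and $L\rho$ is negligible compared to $\rmax$ inside the integration window, the weight tends to the constant $\rmax$ and one obtains
\[
\|\psi_F e^{im\theta}\|_2^2
\sim 2\pi\, L^{1-2/\sigma}(t) \cdot \rmax \int_{-\rho_c}^{\rho_c} |F(\rho)|^2 \, d\rho,
\qquad t\to T_c.
\]
Since the power in the ring region is bounded above by the conserved total power $\|\psi_0\|_2^2 < \infty$, and since $\lim_{t\to T_c} L(t)=0$, the factor $L^{1-2/\sigma}$ must remain bounded as $L \to 0$, which forces the exponent to satisfy $1 - 2/\sigma \ge 0$, i.e., $\sigma \ge 2$.

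There is really no obstacle here: the argument is purely modulus-based, and the only difference from the scalar case is the innocuous replacement of the spherical-shell weight $r^{d-1}$ in general dimension~$d$ by the 2D weight $r$ together with the $2\pi$ from the $\theta$-integral. Accordingly, I would present this as a short proof that reduces immediately to Lemma~\ref{lem:NLS_sigma_ge_2}, noting that the phase factor $e^{im\theta}$ cannot influence $|\psi|$ and that the conservation of $L^2$ norm is the only ingredient used.
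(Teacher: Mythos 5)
Your proposal is correct and follows exactly the paper's route: the paper likewise observes that the proof of Lemma~\ref{lem:NLS_sigma_ge_2} depends only on $|\psi|$, so the unimodular factor $e^{im\theta}$ changes nothing and the power computation forces $L^{1-2/\sigma}$ to stay bounded, hence $\sigma\ge2$. Your version merely spells out the $\theta$-integration and the $2\pi$ factor explicitly, which the paper leaves implicit.
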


We now show that the blowup dynamics of standing vortex solutions is
the same as the blowup dynamics of collapsing solutions of the
one-dimensional NLS~\eqref{eq:1D_SC_NLS}.  Indeed, in the ring
region of a standing vortex solution,
\[
    \left[ A_{rr} \right] \sim\frac{[A]}{L^2(t)},\qquad
    \left[ \frac{d-1}rA_{r} \right]
        \sim \frac{(d-1)[A]}{\rmax(T_c)\cdot L(t)},\qquad
    \left[ \frac{m^2}{r^2}A \right]
        \sim \frac{m^2[A]}{\rmax^2(T_c)}.
\]
Therefore, as~$t\to T_c$, both the~$\frac{d-1}rA_{r}$
and~$\frac{m^2}{r^2}A$ terms in equation~\eqref{eq:NLS} become
negligible compared with~$A_{rr}$.

 Hence, as in the non-vortex case, near the singularity,
    equation~\eqref{eq:NLS} reduces to the
    one-dimensional NLS~\eqref{eq:1D_SC_NLS}, i.e.,
    \begin{equation}
        A(t,r)\sim \phi\left( t,x=r-\rmax(t) \right),
    \end{equation}
    where~$\phi$ is a peak-type solution of the one-dimensional
    NLS~\eqref{eq:1D_SC_NLS}.  Therefore, we expect that the blowup dynamics of standing-vortex solutions
of the NLS~\eqref{eq:NLS} with $d=2$ and~$\sigma>2$ to be the same
as the blowup dynamics of collapsing peak solution of the
one-dimensional NLS equation~\eqref{eq:1D_SC_NLS} with the same
nonlinearity exponent~$\sigma$:
\begin{conj}    \label{conj:NLS_ring_equals_peak_vortex}
Let~$\psi(t,r,\theta)=A(t,r)e^{im\theta}$ be a singular
standing-vortex solution of the NLS equation~\eqref{eq:2DNLS}
with~$\sigma>2$.  Then,~$\psi$ blows up with the asymptotic
self-similar profile
\[
\psi\sim e^{im\theta}\cdot\psi_F(t,r),
\]
where~$\psi_F$ is given by~\eqref{eq:psiF}.
In addition, items 1-3 of Conjecture~\ref{conj:NLS_ring_equals_peak}
hold.
\end{conj}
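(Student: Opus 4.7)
The plan is to run the same argument that supported Conjecture~\ref{conj:NLS_ring_equals_peak}, with only one additional term to handle. First, by Lemma~\ref{lem:vortex_IC}, the ansatz $\psi(t,r,\theta)=A(t,r)e^{im\theta}$ reduces the two-dimensional NLS~\eqref{eq:2DNLS} to the radial vortex equation~\eqref{eq:vortex_NLS} for $A(t,r)$. Thus it suffices to analyze $A$ in the ring region $|r-r_{max}(t)|=\mathcal{O}(L(t))$, which by the standing-ring assumption stays bounded away from $r=0$, since $r_\infty:=\lim_{t\to T_c}r_{max}(t)\in(0,\infty)$.

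Next I would repeat the scaling estimates of Section~\ref{sec:theory_for_NLS_standing}, treating the centrifugal term $-m^2 A/r^2$ as the only new ingredient. Substituting $|A|\sim L^{-1/\sigma}|F((r-r_{max})/L)|$ into~\eqref{eq:vortex_NLS} yields, in the ring region,
\[
[A_{rr}]\sim\frac{[A]}{L^2(t)},\qquad \left[\frac{1}{r}A_r\right]\sim\frac{[A]}{r_\infty\,L(t)},\qquad \left[\frac{m^2}{r^2}A\right]\sim\frac{m^2\,[A]}{r_\infty^2}.
\]
Since $L(t)\to 0$, both the first-derivative term and the centrifugal term are of lower order in $L$ than $A_{rr}$, hence become negligible as $t\to T_c$. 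At leading order, equation~\eqref{eq:vortex_NLS} therefore reduces to the one-dimensional supercritical NLS~\eqref{eq:1D_SC_NLS} in the shifted variable $x=r-r_{max}(t)$.

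To close, I would invoke the review in Section~\ref{sec:theory_for_1D_NLS_collapse}: peak-type blowup of the one-dimensional NLS with $\sigma>2$ is self-similar with the admissible profile $\phi_S$ of~\eqref{eq:psiS}, at the square-root blowup rate $L(t)\sim\kappa_S(\sigma)\sqrt{T_c-t}$ with the same universal constant $\kappa_S(\sigma)$. Matching then gives $A(t,r)\sim\phi_S(t,r-r_{max}(t))$, so that $\psi\sim e^{im\theta}\psi_F$ with $\psi_F$ as in~\eqref{eq:psiF}, and items 1--3 of Conjecture~\ref{conj:NLS_ring_equals_peak} transfer verbatim to the vortex setting. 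The main obstacle, exactly as in the non-vortex case, is that this reduction is only formal: there is no rigorous justification for discarding the lower-order terms uniformly in the self-similar variable, nor for the existence of a standing-vortex singular solution to begin with. For this reason the statement is phrased as a conjecture, and its validity is to be assessed numerically.
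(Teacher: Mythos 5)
Your proposal follows essentially the same route as the paper's own supporting analysis in Section~\ref{sec:vortex}: reduce via Lemma~\ref{lem:vortex_IC} to equation~\eqref{eq:vortex_NLS}, note that in the ring region $[A_{rr}]\sim[A]/L^2$ dominates both $[\frac{1}{r}A_r]\sim[A]/(r_{\max}L)$ and the centrifugal term $[\frac{m^2}{r^2}A]\sim m^2[A]/r_{\max}^2$, and then transfer the one-dimensional peak-type theory of Section~\ref{sec:theory_for_1D_NLS_collapse} to conclude $\psi\sim e^{im\theta}\psi_F$. Your closing caveat that the reduction is formal and the statement is therefore a conjecture to be tested numerically also matches the paper's stance.
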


\subsection{Simulations} We solve equation~\eqref{eq:vortex_NLS} with $m=1$ and
$\sigma=3$, with the initial condition
\begin{equation}\label{eq:vortex_simulation_IC}
A_0=2\tanh(4r^2)e^{-2(r-5)^2}.
\end{equation}
In Figure~\ref{fig:NLS_d=2_sigma=3_m=1_standing_ring} we plot the
ring radius~$r_{max}(t)$ as a function of the focusing
factor~$1/L(t)$, as the solution blows up over 10 orders of
magnitude. Since~$\lim_{t\to T_c} r_{max}(t) = 5.0011$, the vortex
is standing.
\begin{figure}
    \begin{center}
    \scalebox{.8}{\includegraphics{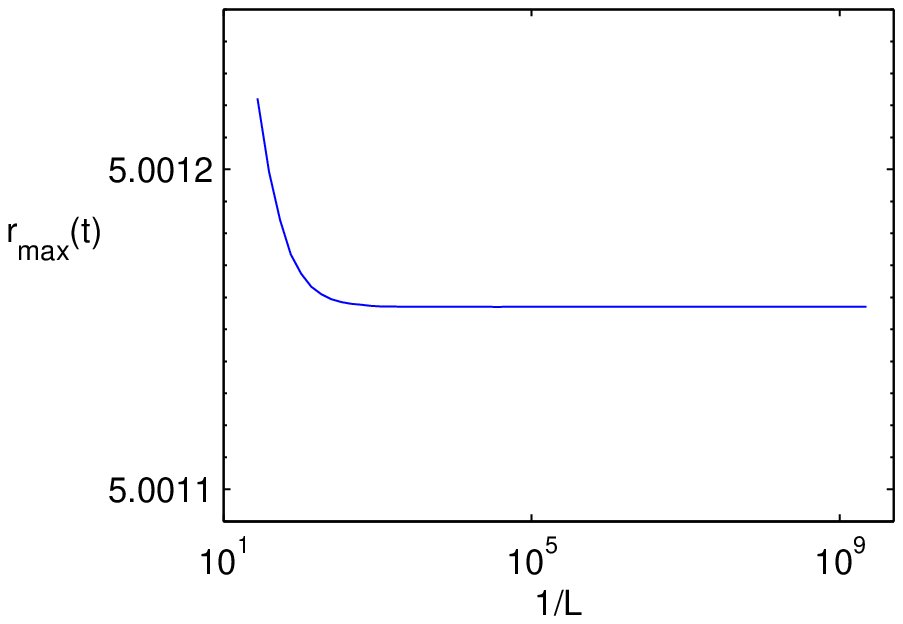}}
    \mycaption{
        Ring radius~$r_{max}(t)$ as a function of the focusing level~$1/L(t)$ for the
        solution of the two-dimensional NLS~(\ref{eq:NLS}) with~$m=1$,~$\sigma=3$ and
        the initial condition~\eqref{eq:vortex_simulation_IC}.
    \label{fig:NLS_d=2_sigma=3_m=1_standing_ring}
    }
    \end{center}
\end{figure}

We now test Conjecture~\ref{conj:NLS_ring_equals_peak_vortex}
numerically item by item.
\begin{enumerate}
\item In Figure~\ref{fig:NLS_d=2_sigma=3_m=1} we plot the rescaled solution, see
        equation~(\ref{eq:normalization_psi}), at~$1/L=10^4$ and~$1/L=10^8$, and
        observe that, indeed, the standing ring solution undergoes a
        self-similar collapse with the profile~\eqref{eq:psiF}.
\item To verify that the self-similar collapse profile is, up to a
        shift in~$r$ and multiplication by~$e^{im\theta}$, the asymptotic collapse profile~$\phi_S$
            of the one-dimensional NLS~(\ref{eq:1D_SC_NLS}), we
superimpose the rescaled solution of the one-dimensional
NLS~(\ref{eq:1D_SC_NLS}) from Figure~\ref{fig:NLS_d=1_sigma=3}A,
as well as the admissible solution~$S(x,\sigma=3)$, onto the
rescaled solutions of Figure~\ref{fig:NLS_d=2_sigma=3_m=1}A and
observe that, indeed, the four curves are indistinguishable.
\item Figure~\ref{fig:NLS_d=2_sigma=3_m=1}B shows
that
\[
L(t)\sim 1.701\cdot(T_c-t)^{0.50068}.
\]
Therefore, the blowup rate is square root or slightly faster.
Figure~\ref{fig:NLS_d=2_sigma=3_m=1}C shows that
\[
\lim_{T_c\to t}LL_t\approx-1.384,
\]
indicating that the blowup rate is square-root, i.e.,
\[
L(t)\sim \LC^{\rm blowup-rate}_{\rm 2D-vortex}\sqrt{T_c-t},\qquad
\LC^{\rm blowup-rate}_{\rm 2D-vortex}=\sqrt{2\cdot1.384}\approx
1.664.
\]
In addition, there is an excellent match between the
parameter~$\LC=\LC_S(\sigma=3)\approx1.664$ of the admissible S
profile, see~(\ref{eq:ODE4s_parms_sigma=3}), and the value
of~$\LC^{\rm blowup-rate}_{\rm 2D-vortex}\approx  1.664$ extracted
from the blowup rate of the two-dimensional vortex solution.
\end{enumerate}
\begin{figure}
    \begin{center}
    \scalebox{.8}{\includegraphics{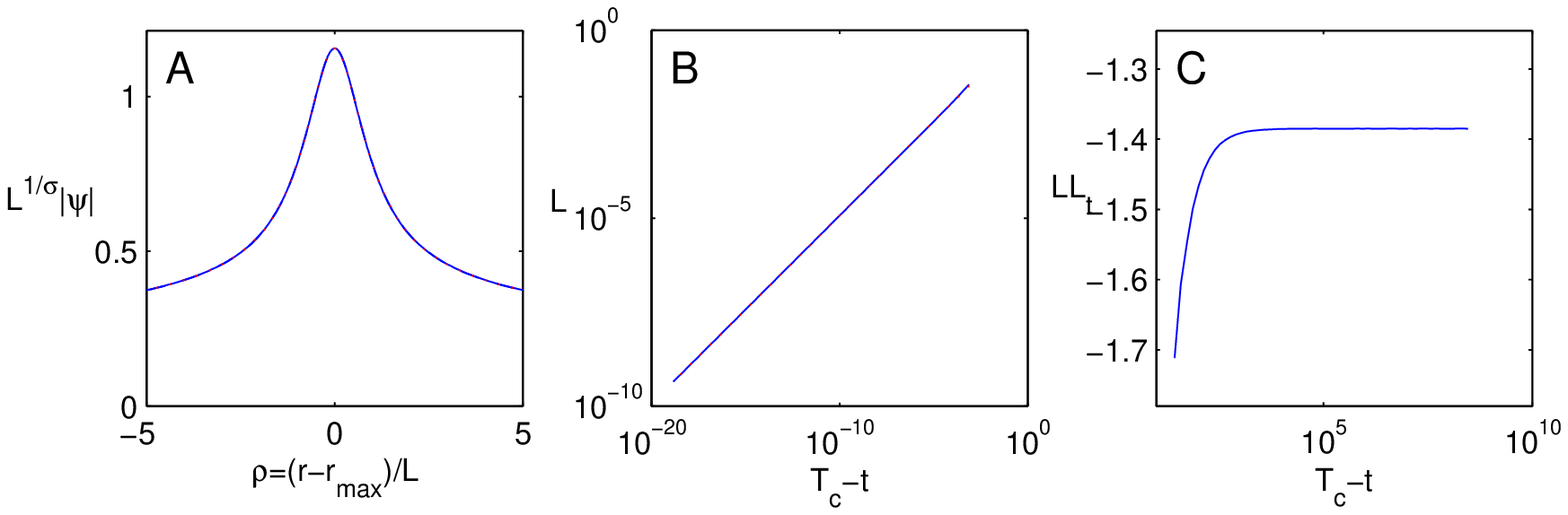}}
    \mycaption{
        Solution of equation~(\ref{eq:vortex_NLS}) with $\sigma=3$, $m=1$, and the
        initial condition~\eqref{eq:vortex_simulation_IC}.
        A:~Rescaled solution according to~(\ref{eq:normalization_psi}) at focusing
        levels $1/L=10^4$ (solid) and~$1/L=10^8$ (dashed), dotted curve is the asymptotic
        profile~$S$, and the dashed curve is the rescaled solution of the
        one-dimensional NLS at~$1/L=10^8$, taken from
        Figure~\ref{fig:NLS_d=1_sigma=3}A.
        All four curves are indistinguishable.
        B:~$L$ as a function of~$(T_c-t)$ on a logarithmic scale.
        Dotted curve is the fitted curve
        $1.71( T_c-t)^{0.5007}$.
        The two curves are indistinguishable.
       C:~$LL_t$ as a function of $1/L$.
       \label{fig:NLS_d=2_sigma=3_m=1}
    }
    \end{center}
\end{figure}

\section{Singular peak-type solutions of the one-dimensional supercritical BNLS}\label{sec:1D_BNLS_collapse}
In Section~\ref{sec:1D_NLS_collapse} we reviewed the theory of
singular peak-type solutions of the one-dimensional NLS. In this
section, we present the analogous findings for the one-dimensional
BNLS. We will make use of these results in the study of singular
standing-ring solutions of the BNLS in
Section~\ref{sec:BNLS_standing}.

\subsection{Analysis}\label{sec:theory_for_1D_BNLS_collapse}
Let us consider the one-dimensional supercritical focusing BNLS
\begin{equation}\label{eq:1D-BNLS}
    i\phi_t(t,x)-\phi_{xxxx}+|\phi|^{2\sigma}\phi=0,\qquad \sigma>4.
\end{equation}
At present, there is no theory for singular peak-type solutions of
equation~\eqref{eq:1D-BNLS}.
A recent numerical study~\cite{Baruch_Fibich_Mandelbaum:2009} suggests that peak-type
singular solutions of the supercritical BNLS~(\ref{eq:1D-BNLS}) collapse with a
self-similar asymptotic profile~$\phi_B$, .i.e.,~$\phi\sim\phi_B$, where
\begin{equation}    \label{eq:phiB}
    \phi_B(t,x)=\frac{1}{L^{2/\sigma}(t)}B(\xi)e^{i\tau},
    \qquad \xi=\frac{x}{L(t)},
    \qquad \tau(t) = \int_{s=0}^{t}\frac{ds}{L^4(s)}.
\end{equation}
The blowup rate~$L(t)$ of these solutions is a quartic root, i.e.,
\begin{equation}    \label{eq:BNLS_4qrt_blowuprate}
    L(t)\sim\LCB\sqrt[4]{T_c-t}, \qquad t\to T_c,
\end{equation} where $\LCB>0$.
In addition, the self-similar profile~$B(\xi)$ is the solution of
\begin{equation}    \label{eq:BNLS-SuperCriticalQEquation}
    \left(
        -1+\frac{i}{2\sigma} \LCB^4
    \right) B(\xi)
    +\frac{i}{4} \LCB^4 \xi B_{\xi}
    - B_{\xi\xi\xi\xi}
    +|B|^{2\sigma}B=0.
 \end{equation}
Note that the NLS analogue of the $B(\xi)$ profile is {\em not} the
$S(\xi)$ profile of equation~\eqref{eq:ODE4S}, but rather
$S(\xi)\cdot e^{i\frac{L_t}{4L}x^2}$. The analogue of the quadratic
phase term in BNLS theory is at present unknown.

In general, symmetric solutions
of~\eqref{eq:BNLS-SuperCriticalQEquation} are complex-valued, and
depend on the parameter~$\LCB$ and on the initial conditions~$B(0)$
and $B^{\prime\prime}(0)$.
We conjecture that, in analogy with the NLS, the following holds:
\begin{conj}    \label{conj:admissible_BNLS}
    \begin{enumerate}
        \item The nonlinear fourth-order ODE~\eqref{eq:BNLS-SuperCriticalQEquation}
            has a unique `admissible solution' with
            $\LCB=\LCB(\sigma)$, $B(0)=B_0(\sigma)$, and
            $B^{\prime\prime}(0)=B_0^{\prime\prime}(\sigma)$.
        \item This admissible solution is the self-similar profile  of the
                asymptotic peak-type blowup profile $\phi_B$, see~\eqref{eq:phiB}.
        \item The value of $\LCB$ of the blowup
            rate~\eqref{eq:BNLS_4qrt_blowuprate} is equal
            to~$\LCB(\sigma)$ of the admissible~$B$
            profile.
    \end{enumerate}
\end{conj}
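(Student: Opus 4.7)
The plan is to prove the three items of Conjecture~\ref{conj:admissible_BNLS} in sequence. Items~1 and~3 are essentially ODE questions, reducible modulo rigorous verification to well-understood techniques; item~2 is a genuine PDE stability statement and is where the main difficulty lies.

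For item~1, I would set up~\eqref{eq:BNLS-SuperCriticalQEquation} as a shooting problem. Among even solutions (i.e., $B^\prime(0)=B^{\prime\prime\prime}(0)=0$), the initial data at $\xi=0$ consist of $(B(0),B^{\prime\prime}(0))\in\mathbb{C}^2$, together with the free real parameter $\LCB>0$; after quotienting by the overall phase $B\mapsto e^{i\alpha}B$ one is left with a four-real-parameter family. A WKB/local analysis of~\eqref{eq:BNLS-SuperCriticalQEquation} as $\xi\to\infty$ (where the nonlinear term is negligible) classifies the four asymptotic modes of the linearized equation; the dominant balance between $-B_{\xi\xi\xi\xi}$ and $\tfrac{i}{4}\LCB^4\xi B_\xi$ gives $(S^\prime)^3\sim\tfrac{i}{4}\LCB^4\xi$ for $B\sim e^S$, yielding modes with distinct growth rates. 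Requiring $B$ to lie in the decaying submanifold at $+\infty$ imposes the matching number of real constraints, and supplementing this with an admissibility criterion (monotonicity of $|B|$ and an appropriate zero-Hamiltonian-type identity analogous to the criterion used for $S$ in Section~\ref{sec:theory_for_1D_NLS_collapse}) isolates a discrete set of triples $(\LCB,B_0,B_0^{\prime\prime})$. Showing that exactly one such triple exists for each $\sigma>4$ would combine a shooting/degree argument with numerical verification of transversality of the shooting map, since closed-form expressions for $B$ are not available.

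For item~2, I would substitute the ansatz~\eqref{eq:phiB} into~\eqref{eq:1D-BNLS}. A direct computation shows that the time-dependent coefficients collapse to constants iff $L^3 L_t$ is constant, forcing the quartic-root law~\eqref{eq:BNLS_4qrt_blowuprate} and identifying $\LCB^4 = -4L^3 L_t$; the residual spatial equation is precisely~\eqref{eq:BNLS-SuperCriticalQEquation}. To upgrade this formal match to a genuine convergence statement, I would set up a modulation framework
\begin{equation*}
\phi(t,x) = \frac{1}{L^{2/\sigma}(t)}\bigl(B(\xi)+\varepsilon(t,\xi)\bigr)e^{i\tau(t)},\qquad \xi=\frac{x}{L(t)},
\end{equation*}
impose orthogonality conditions on $\varepsilon$ to fix $L(t)$ and $\tau(t)$, linearize the evolution of $\varepsilon$ around $B$, and close energy estimates in a weighted Sobolev norm adapted to the quartic dispersion. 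Item~3 is then an immediate corollary: once $\phi\to\phi_B$ self-similarly, its blowup rate inherits the universal $\LCB(\sigma)$ of the admissible profile, independently of $\phi_0$.

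\textbf{Main obstacle.} The crux is item~2. Unlike the NLS, the BNLS has no Galilean or pseudo-conformal symmetry to normalize away, the profile $B$ is not explicit, and the fourth-order dispersion $-\partial_x^4$ admits neither a maximum principle nor a variational characterization of its ground state. Spectral information on the (non-self-adjoint) linearization around $B$ is therefore inaccessible by variational methods and would have to be extracted numerically, after which one would still need a robust mechanism to suppress the unstable modes via modulation. This is precisely why the present work states the result as a conjecture and supports it by simulation; a direct proof would likely demand either a full spectral analysis of the linearized operator, or the discovery of new monotone/almost-monotone quantities along the BNLS flow, neither of which is presently available.
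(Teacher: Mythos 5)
The statement you are addressing is posed in the paper as a \emph{conjecture}, and the paper offers no proof of it: its only support is the formal substitution of the ansatz~\eqref{eq:phiB} into~\eqref{eq:1D-BNLS} (which yields~\eqref{eq:BNLS-SuperCriticalQEquation} and forces the quartic-root law) together with the numerical experiment of Section~\ref{sec:simulations_for_1D_BNLS_collapse}, where the one-dimensional BNLS with $\sigma=6$ and Gaussian data is observed to blow up self-similarly with $\lim_{t\to T_c}L^3L_t\approx-0.2898$, i.e.\ $\LCB\approx1.0376$. Your text is therefore a genuinely different route in kind: an analytical program where the paper supplies only formal computation and simulation. The overlapping parts are correct --- your observation that the time-dependent coefficients collapse to constants iff $L^3L_t$ is constant, with $\LCB^4=-4L^3L_t$, is exactly the computation behind~\eqref{eq:BNLS-SuperCriticalQEquation}, and your shooting formulation of item~1 (a four-real-parameter family after quotienting by phase, matched against the decaying submanifold at infinity) is the natural fourth-order analogue of the Budd--Chen--Russell procedure the paper applies numerically to the second-order profile $S$ in Section~\ref{sec:numerical_methods}. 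What the analytical program buys, if it could be completed, is universality of $\LCB(\sigma)$ as a theorem rather than an observation; what the paper's numerics buy is concrete evidence over eight orders of magnitude of focusing, which your plan cannot yet deliver.

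Two caveats you should make explicit. First, your item-1 argument leans on ``an appropriate zero-Hamiltonian-type identity analogous to the criterion used for $S$,'' but the paper itself points out that the BNLS analogue of the quadratic phase $e^{iL_tr^2/4L}$ --- and hence of the admissibility condition --- is at present unknown; so the very definition your shooting argument needs is missing, and a ``degree argument with numerical verification of transversality'' is not a proof of uniqueness. Second, your own closing paragraph concedes that item~2 (the modulation/stability step) is out of reach, which is precisely why the paper states the result as a conjecture. In short: nothing in your outline contradicts the paper, the formal reductions agree, but neither you nor the paper proves the statement, and your proposal should be read as a research roadmap rather than a proof.
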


\subsection{Simulations}    \label{sec:simulations_for_1D_BNLS_collapse}
\begin{figure}
    \begin{center}
    \scalebox{.8}{\includegraphics{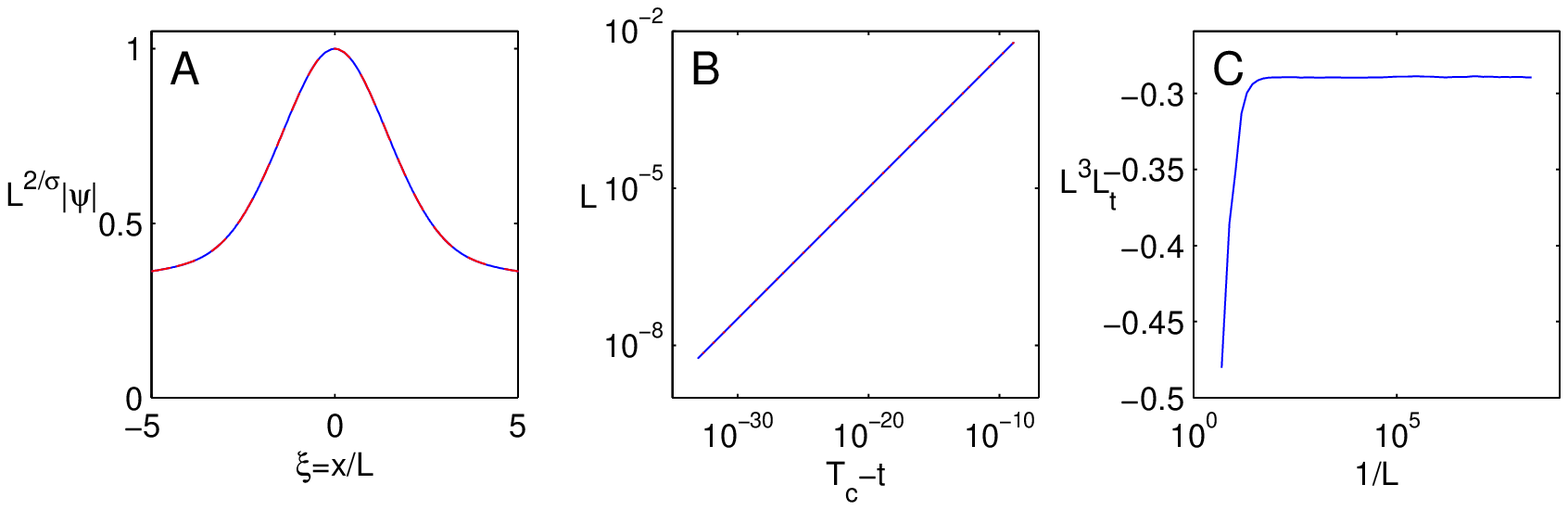}}
    \mycaption{   \label{fig:BNLS_d=1_sigma=6}
        Solution of the one-dimensional BNLS~(\ref{eq:1D-BNLS}) with $\sigma=6$
        and the Gaussian initial condition~\eqref{eq:1D_BNLS_IC}.
        A:~Rescaled solution according to~(\ref{eq:normalization-B}) at focusing
        levels $1/L=10^4$ (solid) and~$1/L=10^8$ (dashed).
        The two curves are indistinguishable.
        B:~$L$ as a function of~$(T_c-t)$ on a logarithmic scale.
        Dotted curve is the fitted curve
        $1.020( T_c-t)^{0.25017}$.
        C:~$L^3L_t$ as a function of $1/L$.
    }
    \end{center}
\end{figure}
We solve the one-dimensional BNLS~(\ref{eq:1D-BNLS}) with~$\sigma=6$
and the Gaussian initial condition
\begin{equation}\label{eq:1D_BNLS_IC}
\phi(t=0,x) = 1.6\cdot e^{-x^2}.
\end{equation}
We first show that the BNLS solution blows up with the self-similar
profile~$\phi_B$, see~\eqref{eq:phiB}. To do that, we rescale the
solution according to
\begin{equation}\label{eq:normalization-B}
    \phi_{\rm rescaled}(t,x)=L^{2/\sigma}(t)
        \phi\left(\frac{x}{L(t)}\right), \qquad
        L(t)= \norm{\phi}_{\infty}^{-\sigma/2}.
\end{equation}
The rescaled profiles at focusing levels of~$1/L=10^4$ and~$1/L=10^8$ are
indistinguishable, see Figure~\ref{fig:BNLS_d=1_sigma=6}A, indicating that the
solution is indeed self-similar while focusing over 4 orders of magnitude.

Next, we consider the blowup rate of the collapsing solution, see
Figure~\ref{fig:BNLS_d=1_sigma=6}B. To do that, we first assume that
\begin{equation}\label{eq:L_assumption-B}
    L(t)\sim \LCB(T_c-t)^p,
\end{equation}
and find the best fitting~$\LCB$ and~$p$. In this case~$\LCB\approx
1.020$ and~$p \approx 0.25017$, indicating that the blowup-rate is
close to a quartic root. To verify that the blowup rate is indeed
$p=1/4$, we compute the limit~${\displaystyle \lim_{t\to T_c}}L^3L_t$.
Note that for the quartic-root blowup rate~\eqref{eq:BNLS_4qrt_blowuprate}
\[
    \lim_{t\to T_c}L^3L_t=-\frac{\LCB^4}4<0,
\]
while for a faster-than-a-quartic root blowup rate~$L^3L_t\to 0$.
Since $ \lim_{T_c\to t}L^3L_t \approx -0.2898$,
see Figure~\ref{fig:BNLS_d=1_sigma=6}C, the blowup rate is a quartic-root (with
no loglog correction), i.e.,
\[
L(t)\sim \LC^{\rm blowup-rate}_{B,1D}\sqrt[4]{T_c-t},\qquad \LC^{\rm
blowup-rate}_{B,1D}\approx\sqrt[4]{4\cdot0.2898} \approx 1.0376.
\]

\section{Singular standing-ring solutions of the supercritical BNLS}
\label{sec:BNLS_standing}

In Section~\ref{sec:NLS_standing} we analyzed singular standing-ring
solutions of the NLS with~$\sigma>2$.  In this section, we derive
the analogous results for the biharmonic NLS with~$\sigma>4$.

\subsection{Analysis} Let us consider singular solutions of the focusing
supercritical BNLS
\begin{equation}    \label{eq:radial-BNLS}
    i\psi_t(t,r) - \Delta^2_r\psi + \left|\psi\right|^{2\sigma}\psi = 0,
    \qquad \sigma d > 4, \qquad d>1,
\end{equation}
where
\begin{equation}    \label{eq:radial_bi_Laplacian}
    \Delta_r^2 =
        -\frac{(d-1)(d-3)}{r^3}\partial_r
        +\frac{(d-1)(d-3)}{r^2}\partial_r^2
        +\frac{2(d-1)}{r}\partial_r^3
        +\partial_r^4
\end{equation}
is the radial biharmonic operator.
The following Lemma, which is the BNLS analogue of
Lemma~\ref{lem:NLS_sigma_ge_2}, shows that standing-ring collapse can only occur
for~$\sigma\geq4$:
\begin{lem} \label{lem:BNLS_sigma_ge_4}
    Let~$\psi$ be a self-similar standing-ring singular solution of the
    BNLS~\eqref{eq:BNLS}, i.e.,~$\psi\sim\psi_B$, where
    \begin{subequations}
    \begin{equation}\label{eq:abs_psiB}
        |\psi_B(t,r)|=\frac{1}{L^{2/\sigma}(t)}|B\left(\rho\right)|,\qquad
            \rho=\frac{r-\rmax(t)}{L(t)},
     \end{equation}
     and
     \begin{equation}
         0< \lim_{t\to T_c}\rmax(t) < \infty.
        \end{equation}
    \end{subequations}
    Then,~$\sigma\ge4$.
    \begin{proof}
        Integration gives $
            \norm{\psi_B}_2^2=\mathcal{O}\left( L^{1-4/\sigma} \right)
        $, and so the proof of Lemma~\ref{lem:NLS_sigma_ge_2} holds for
        $\sigma\ge4$.
    \end{proof}
\end{lem}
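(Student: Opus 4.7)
The plan is to follow the template of Lemma~\ref{lem:NLS_sigma_ge_2} verbatim, with the only substantive change being the different power in the self-similar amplitude scaling: for BNLS one has $|\psi_B|\sim L^{-2/\sigma}|B(\rho)|$ rather than $|\psi_F|\sim L^{-1/\sigma}|F(\rho)|$ in the NLS case. Since the BNLS \eqref{eq:BNLS} conserves the $L^2$-norm, exactly as the NLS does, the strategy is to bound the power of the collapsing part~$\psi_B$ from below by a quantity that blows up unless the scaling exponent is nonnegative.

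Concretely, I would first restrict the $L^2$-integral of~$\psi$ to a ring annulus of width $\rho_c\cdot L(t)$ around $r=r_{\max}(t)$, giving
\[
\|\psi_B\|_2^2
= L^{-4/\sigma}(t)\int_{r_{\max}-\rho_c L}^{r_{\max}+\rho_c L}
\bigl|B\bigl((r-r_{\max})/L\bigr)\bigr|^2 r^{d-1}\,dr.
\]
Next, I would substitute $\rho=(r-r_{\max})/L$, so that $r=L\rho+r_{\max}$ and $dr=L\,d\rho$, yielding
\[
\|\psi_B\|_2^2
= L^{1-4/\sigma}(t)\int_{-\rho_c}^{\rho_c}
|B(\rho)|^2\bigl(L\rho+r_{\max}(t)\bigr)^{d-1}\,d\rho.
\]
Since $r_{\max}(t)$ has a positive finite limit while $L(t)\to 0$, the factor $(L\rho+r_{\max})^{d-1}$ tends uniformly on $|\rho|\le\rho_c$ to $r_{\max}^{d-1}(T_c)>0$, so
\[
\|\psi_B\|_2^2 \;\sim\; L^{1-4/\sigma}(t)\cdot r_{\max}^{d-1}(T_c)\int_{-\rho_c}^{\rho_c}|B(\rho)|^2\,d\rho,\qquad t\to T_c.
\]

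Finally, invoking conservation of the $L^2$-norm for the BNLS, $\|\psi_B\|_2^2\le\|\psi\|_2^2=\|\psi_0\|_2^2<\infty$, I conclude that $L^{1-4/\sigma}(t)$ must remain bounded as $L(t)\to 0^+$, which forces $1-4/\sigma\ge 0$, i.e.~$\sigma\ge 4$. There is no genuine obstacle here: the whole argument is just the NLS computation with the exponent $-1/\sigma$ replaced by $-2/\sigma$ (which originates from the different homogeneity required by the biharmonic scaling), and the only ingredients needed are conservation of the $L^2$-norm, the asymptotic $r_{\max}\to$ const., and $L\to 0$, all of which are built into the statement.
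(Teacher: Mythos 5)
Your proposal is correct and is essentially the paper's own argument: the paper simply notes that integration gives $\norm{\psi_B}_2^2=\mathcal{O}(L^{1-4/\sigma})$ and then reuses the computation of Lemma~\ref{lem:NLS_sigma_ge_2}, which is exactly the annulus restriction, change of variables, and $L^2$-conservation bound you spell out. No differences worth noting.
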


\begin{cor} \label{cor:standing_BNLS_strength}
    $\psi_B$ undergoes a strong collapse when~$\sigma=4$, and a weak collapse
    when~$\sigma>4$.
    \begin{proof}
        This follows directly from the proof of Lemma~\ref{lem:BNLS_sigma_ge_4}.
    \end{proof}
\end{cor}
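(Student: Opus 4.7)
The plan is to mimic exactly the proof of Corollary~\ref{cor:standing_NLS_strength}, which extracted the strong/weak dichotomy directly from the scaling computation carried out inside the proof of Lemma~\ref{lem:NLS_sigma_ge_2}. Here, the analogous computation is the one already performed inside Lemma~\ref{lem:BNLS_sigma_ge_4}, where the change of variables $\rho = (r-\rmax)/L$ applied to $|\psi_B|^2 r^{d-1}$ gives
\[
    \|\psi_B\|_2^2 \;\sim\; L^{1-4/\sigma}(t)\cdot \rmax^{d-1}(t)\int_{-\rho_c}^{\rho_c}|B(\rho)|^2\,d\rho,
    \qquad t\to T_c,
\]
since the Jacobian factor contributes one power of $L$ and the $L^{-2/\sigma}$ prefactor in $|\psi_B|$ is squared to $L^{-4/\sigma}$.

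The next step is to localize: take $\varepsilon>0$ small and restrict the integration region to $|r-\rmax(t)|<\varepsilon$, i.e.\ $|\rho|<\varepsilon/L(t)$. Because $L(t)\to 0$, this localized integral still captures the full self-similar profile in the limit, so
\[
    P_{\rm collapse}\;=\;\liminf_{\varepsilon\to 0+}\lim_{t\to T_c}
        \int_{|r-\rmax|<\varepsilon}|\psi_B|^2 r^{d-1}\,dr
    \;=\;\lim_{t\to T_c} L^{1-4/\sigma}(t)\cdot \rmax^{d-1}(T_c)\cdot \|B\|_2^2.
\]
Since $0<\lim_{t\to T_c}\rmax(t)<\infty$ and $\|B\|_2^2$ is finite (and nonzero, as $\psi_B$ is a genuine collapsing profile), the only relevant factor is $L^{1-4/\sigma}$.

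The dichotomy is then immediate. When $\sigma=4$, the exponent $1-4/\sigma$ vanishes, so $L^{1-4/\sigma}\equiv 1$ and $P_{\rm collapse}$ is a strictly positive finite number, which is strong collapse. When $\sigma>4$, the exponent $1-4/\sigma$ is strictly positive, and since $L(t)\to 0$ by assumption \eqref{eq:abs_psiB}, $L^{1-4/\sigma}\to 0$, hence $P_{\rm collapse}=0$, which is weak collapse. There is no genuine obstacle here; the computation is entirely parallel to the NLS case, the only change being the replacement of the NLS scaling exponent $1/\sigma$ by the BNLS scaling exponent $2/\sigma$, which is precisely what turns the threshold $\sigma=2$ into $\sigma=4$.
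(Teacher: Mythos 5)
Your proposal is correct and is essentially the paper's own argument made explicit: the paper's proof of Lemma~\ref{lem:BNLS_sigma_ge_4} reduces to the computation $\norm{\psi_B}_2^2=\mathcal{O}\left(L^{1-4/\sigma}\right)$, and the corollary reads off the strong/weak dichotomy from the sign of the exponent $1-4/\sigma$, exactly as you do. No substantive difference from the paper's route.
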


Let us further consider standing-ring solutions of the BNLS~(\ref{eq:radial-BNLS}).
In this case, in the ring region of a standing ring solution, the terms of the
biharmonic operator, see~\eqref{eq:radial_bi_Laplacian}, behave as \[
    \left[ \frac1{r^{4-k}}\partial_r^k\psi  \right]
        = {\cal O}\left( L^{-k}\right)
    ,\qquad k=0,\dots,4.
\]
Therefore, $\Delta_r^2\psi \sim \partial^4_r\psi$.
Hence, near the singularity, equation~\eqref{eq:radial-BNLS} reduces to the
one-dimensional BNLS~\eqref{eq:1D-BNLS}, i.e.,
\begin{equation}
    \psi(t,r)\sim \phi\left( t,x=r-\rmax(t) \right),
\end{equation}
where~$\phi$ is a peak-type solution of the one-dimensional
BNLS~\eqref{eq:1D-BNLS}.
Therefore, we predicted in~\cite{Baruch_Fibich_Mandelbaum:2009} and also
confirmed numerically that the blowup dynamics of standing ring
solutions of the NLS~\eqref{eq:radial-BNLS} with $d>1$
and~$\sigma=4$ is the same as the blowup dynamics of singular peak
solutions of the one-dimensional BNLS~\eqref{eq:radial-BNLS} with~$\sigma=4$.
Similarly, we now predict that the blowup dynamics
of standing ring solutions of the BNLS~\eqref{eq:radial-BNLS} with
$d>1$ and~$\sigma>4$ is the same as the blowup dynamics of
collapsing peak solutions of the one-dimensional
BNLS~\eqref{eq:1D-BNLS} with the same nonlinearity
exponent~$\sigma$:
\begin{conj}    \label{conj:BNLS_ring_equals_peak}
    Let~$\psi$ be a singular standing-ring solution of the
    BNLS~\eqref{eq:radial-BNLS} with~$d>1$ and~$\sigma>4$.
    Then,
    \begin{enumerate}
        \item The solution is self-similar in the ring region, i.e.,
            $\psi\sim \psi_B $ for $r-\rmax=\mathcal{O}(L)$,
            where~$|\psi_B|$ is given by~\eqref{eq:abs_psiB}.
        \item The self-similar profile~$\psi_B$ is given by
            \begin{equation}\label{eq:psiB}
                \psi_B(t,r)= \phi_B\left( t,x=r-\rmax(t) \right),
            \end{equation}
            where~$\phi_B(t,x)$, see~(\ref{eq:phiB}), is the asymptotic profile
            of the one-dimensional BNLS~(\ref{eq:1D-BNLS}) with the
            same~$\sigma$.
        \item The blowup rate is a quartic root, i.e.,
            \begin{equation}\label{eq:BNLS_conj_blowup_rate}
                L(t)\sim \LCB(\sigma)\sqrt[4]{T_c-t}, \qquad t\to T_c,
            \end{equation}
            where $\LCB(\sigma)>0$ is the value of $\LCB$ of the admissible
            $B$ profile.
    \end{enumerate}
\end{conj}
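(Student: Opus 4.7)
The plan is to formally reduce the radial supercritical BNLS near a standing ring singularity to the one-dimensional BNLS~\eqref{eq:1D-BNLS}, and then invoke Conjecture~\ref{conj:admissible_BNLS}. This is the BNLS analogue of the informal derivation that already led to Conjecture~\ref{conj:NLS_ring_equals_peak}, so the argument should parallel the one used there, but using the four-term biharmonic operator~\eqref{eq:radial_bi_Laplacian} in place of $\psi_{rr}+\frac{d-1}{r}\psi_r$.

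First I would establish the scaling of every term in~\eqref{eq:radial_bi_Laplacian} in the ring region. Under the self-similar ansatz, each $r$-derivative of a function of $\rho=(r-\rmax(t))/L(t)$ contributes a factor $L^{-1}$, so $\partial_r^k\psi=\mathcal{O}(L^{-k})$ for $k=1,\dots,4$. Because the standing-ring hypothesis gives $0<\lim_{t\to T_c}\rmax(t)<\infty$, the prefactors $r^{-j}$ for $j=1,2,3$ are bounded, and the three lower-order pieces of $\Delta_r^2$ scale like $L^{-3}$, $L^{-2}$, $L^{-1}$, all subdominant to $\partial_r^4\psi=\mathcal{O}(L^{-4})$ as $L(t)\to 0$. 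Hence $\Delta_r^2\psi\sim\partial_r^4\psi$ in the ring region, and equation~\eqref{eq:radial-BNLS} reduces to the one-dimensional BNLS~\eqref{eq:1D-BNLS}.

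Next I would substitute the translated ansatz $\psi(t,r)=\phi(t,x)$ with $x=r-\rmax(t)$ into the reduced equation. The chain rule generates a transport term $-\rmax'(t)\phi_x$; one expects that for a genuine standing ring $\rmax(t)\to r_\infty$ with $\rmax'(t)\to 0$ fast enough that this correction is also negligible at leading order. Applying Conjecture~\ref{conj:admissible_BNLS} to the resulting one-dimensional equation then delivers all three items: the self-similar blowup with admissible profile~$B(\rho)$ of~\eqref{eq:BNLS-SuperCriticalQEquation}, the identification $\psi_B(t,r)=\phi_B(t,r-\rmax(t))$ with $\phi_B$ given by~\eqref{eq:phiB}, and the quartic-root blowup rate $L(t)\sim\LCB(\sigma)\sqrt[4]{T_c-t}$ with the universal constant~$\LCB(\sigma)$ inherited from the one-dimensional admissible profile.

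The main obstacle is twofold. First, the whole reduction rests on Conjecture~\ref{conj:admissible_BNLS}, which is itself only formally and numerically supported, so even granting the reduction we cannot obtain more than a conjectural statement; a rigorous route would need, at the very least, existence, uniqueness and attractor properties of the admissible profile~$B$ for~\eqref{eq:BNLS-SuperCriticalQEquation}. Second, and more subtly, one must control how fast the discarded lower-order biharmonic terms and the drift term $\rmax'(t)\phi_x$ actually decay relative to the leading-order one-dimensional dynamics, and show that these perturbations do not shift the value of~$\LCB$ or destabilise the admissible profile. Such a matched-asymptotic/stability analysis is presumably the reason the statement appears as a conjecture and is validated numerically in Section~\ref{sec:BNLS_standing} rather than proved in closed form.
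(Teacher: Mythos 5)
Your proposal matches the paper's treatment: the paper supports this conjecture by exactly the same scaling argument, namely that in the ring region each term $\frac{1}{r^{4-k}}\partial_r^k\psi$ of the radial biharmonic operator~\eqref{eq:radial_bi_Laplacian} is $\mathcal{O}(L^{-k})$ (the $r^{-j}$ prefactors being bounded since $0<\lim_{t\to T_c}\rmax(t)<\infty$), so $\Delta_r^2\psi\sim\partial_r^4\psi$ and the equation reduces to the one-dimensional BNLS, after which the three items follow from the one-dimensional peak-type theory of Conjecture~\ref{conj:admissible_BNLS} and are then checked numerically. Your added remarks about the drift term $\rmax'(t)\phi_x$ and the non-rigorous status of the reduction are consistent with the paper's own framing of the statement as a conjecture validated by simulation.
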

Conjecture~\ref{conj:BNLS_ring_equals_peak} implies, in particular,
that the parameter~$\LCB$ of the blowup-rate of~$\psi$,
see~\eqref{eq:BNLS_conj_blowup_rate}, is the same as the parameter $\LCB$ of the
blowup rate of~$\phi$, see~\eqref{eq:BNLS_4qrt_blowuprate}.
This value depends on the nonlinearity exponent~$\sigma$, but is independent of
the dimension~$d$ and of the initial condition~$\psi_0$.

\subsection{Simulations}

We solve the BNLS~\eqref{eq:radial-BNLS} with~$d=2$ and~$\sigma=6$
with the initial condition~$\left.\psi_0(r) = 1.6\cdot
e^{-(r-5)^2}\right.$. In
Figure~\ref{fig:BNLS_d=2_sigma=6_standing_ring} we plot the ring
radius~$\rmax(t)$ as a function of the focusing factor~$1/L(t)$, as
the solution blows up over 8 orders of magnitude. Since~$\lim_{t\to
T_c} \rmax(t) = 4.992$, the ring is standing and is not shrinking or
expanding. Note that the initial condition is different from the
asymptotic profile~$\psi_B$, suggesting that {\em standing-ring
collapse is (radially) stable}.
\begin{figure}
    \begin{center}
    \scalebox{.8}{\includegraphics{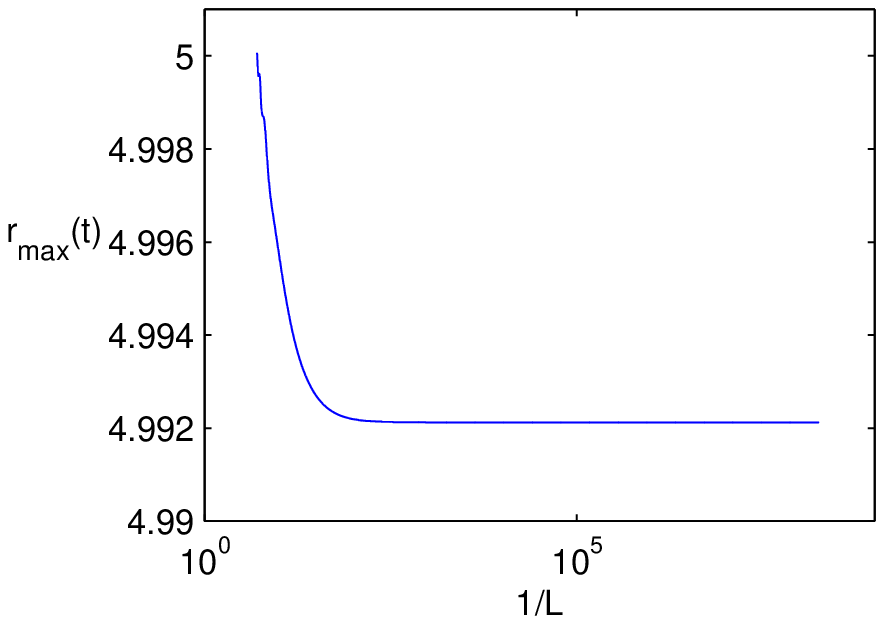}}
    \mycaption{
        Ring radius~$\rmax(t)$ as a function of the focusing level~$1/L(t)$ for the
        solution of the BNLS~(\ref{eq:BNLS}) with $d=2$, $\sigma=6$ and
        the initial condition~$\psi_0(r) = 1.6\cdot e^{-(r-5)^2}$.
        \label{fig:BNLS_d=2_sigma=6_standing_ring}
    }
    \end{center}
\end{figure}

We next test each item of
Conjecture~\ref{conj:BNLS_ring_equals_peak} numerically:
\begin{enumerate}
    \item In Figure~\ref{fig:BNLS_d=2_sigma=6}A, we plot the rescaled solution
        \begin{equation}\label{eq:normalization_psi_B}
            \psi_{\rm rescaled} = L^{2/\sigma}(t)
                \psi\left(\frac{r-\rmax(t)}{L(t)}\right),
            \qquad L(t)=\norm{\psi}_{\infty}^{-\sigma/2},
            \qquad \rmax(t)=\arg\max_r|\psi|,
        \end{equation}
        at $1/L=10^4$ and $1/L=10^8$.
        The two curves are indistinguishable, showing that standing rings
        undergo a self-similar collapse with the self-similar~$\psi_B$
        profile~\eqref{eq:phiB}.
    \item To verify that the self-similar collapse profile~$\psi_B$ is, up to a
        shift in~$r$, equal to the asymptotic collapse
        profile~$\phi_B$ of the one-dimensional
        BNLS~(\ref{eq:1D-BNLS}), we superimpose the rescaled
        solution of the one-dimensional BNLS~(\ref{eq:1D-BNLS})
        from Figure~\ref{fig:BNLS_d=1_sigma=6}A, onto the
        rescaled solutions of
        Figure~\ref{fig:BNLS_d=2_sigma=6}A, and observe that,
        indeed, the curves are indistinguishable.
    \item Figure~\ref{fig:BNLS_d=2_sigma=6}B shows that
        \[
            L(t)\sim 1.020(T_c-t)^{0.25017}.
        \]
        Therefore, the blowup rate is quartic root or slightly
        faster. Figure~\ref{fig:BNLS_d=2_sigma=6}C shows that $
        {\displaystyle \lim_{T_c\to t}}L^3L_t\approx -0.2894, $
        indicating that the blowup rate is quartic-root (with no
        loglog correction), i.e., \[ L(t)\sim \LC_{B, 2D}^{\rm
            blowup-rate}\sqrt[4]{T_c-t},\qquad
                        \LC_{B, 2D}^{\rm blowup-rate} \approx \sqrt[4]{4\cdot0.2894} \approx
                        1.0373.
        \]
        As predicted, there is an excellent match between the value
        of~$\LC_{B,2D}^{\rm blowup-rate}\approx 1.0373$ extracted from the
        two-dimensional BNLS solution, and the value
        of~$\LC^{\rm blowup-rate}_{B,1D}\approx1.0376$ extracted from the
        one-dimensional BNLS solution,
        see Section~\ref{sec:simulations_for_1D_BNLS_collapse}.
\end{enumerate}

\begin{figure}
    \begin{center}
    \scalebox{.8}{\includegraphics{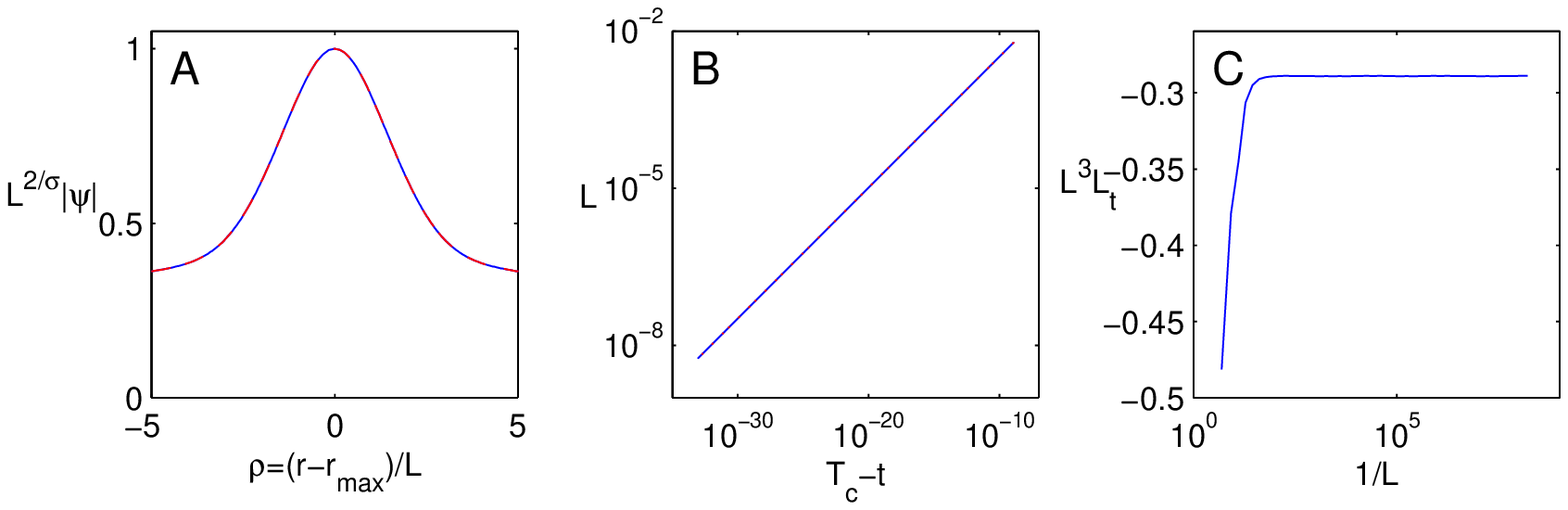}}
    \mycaption{
        \label{fig:BNLS_d=2_sigma=6}
        Solution of the two-dimensional BNLS~(\ref{eq:BNLS}) with $\sigma=6$ and
        the initial condition~$\psi_0(x)=1.6\cdot e^{-(r-5)^2}$.
        A:~Rescaled solution according to~(\ref{eq:normalization-B}) at focusing
        levels $1/L=10^4$ (solid) and~$1/L=10^8$ (dashed).
        The dashed curve is the rescaled solution of the one-dimensional BNLS
        at~$1/L=10^8$, taken from Figure~\ref{fig:BNLS_d=1_sigma=6}.
        All three curves are indistinguishable.
        B:~$L$ as a function of~$(T_c-t)$ on a logarithmic scale.
        Dotted curve is the fitted curve $1.020( T_c-t)^{0.2502}$.
        The two curves are indistinguishable.
        C:~$L^3L_t$ as a function of $1/L$.
    }
    \end{center}
\end{figure}

\section{Singular standing-ring solutions of the nonlinear heat
equation}\label{sec:NLH} The $d$-dimensional radially-symmetric
nonlinear heat equation (NLHE)
\begin{equation}    \label{eq:NLHE}
u_t(t,r)-\Delta u-|u|^{2\sigma}u=0,\qquad \sigma>0,\quad d>1,
\end{equation}
where~$u$ is real and~$\Delta =\partial_{rr}+\frac{d-1}r\partial_r$,
admits singular solutions for any~$\sigma>0$~\cite{GigaKohn_85}. To
the best of our knowledge, until now all known singular solutions
of~\eqref{eq:NLHE} collapsed at a point (or at a finite number of
points~\cite{Merle_Heat_1992}). We now show that the NLHE admits
also singular standing-ring solutions.  The blowup profile and
blowup rate of these solutions are the same as those of singular
peak-type solutions of the one-dimensional NLHE with the
same~$\sigma$.

\subsection{Peak-type solutions of the one-dimensional NLHE (review)}
one-dimensional NLHE\footnote{Throughout this paper, we denote the
solution of the one-dimensional NLHE by~$v$, and its spatial
variable by~$x$.}
\begin{equation}    \label{eq:NLHE_1D}
v_t(t,x)-v_{xx}-|v|^{2\sigma}v=0,\qquad \sigma>0,
\end{equation}
admits singular solutions that collapse with the self-similar
peak-type profile
\begin{subequations}\label{eq:NLHE_1D_prof}
\begin{equation}\label{eq:NLHE_1D_prof_u}
v_S(t,x)=\frac{1}{\lambda^{\frac1\sigma}(t)}\frac1{\left(1+\xi^2\right)^{\frac1{2\sigma}}},\qquad \xi=\frac{x}{L(t)},
\end{equation}
where
\begin{equation}\label{eq:NLHE_1D_prof_L}
\lambda(t)=\sqrt{2\sigma(T_c-t)},\qquad L(t)=\sqrt{2\left(2+\frac1\sigma\right)(T_c-t)|\ln(T_c-t)|},
\end{equation}
\end{subequations}
see~\cite{NonlinearHeatBlowup-08}. Note that unlike the
one-dimensional NLS, the one-dimensional NLHE admits singular
solutions for any~$\sigma>0$.
\subsection{Analysis}
Let us consider singular standing-ring solutions of the
NLHE~\eqref{eq:NLHE}. Since~$\Delta u \sim u_{rr}$ in the ring
region, near the singularity equation~\eqref{eq:NLHE} reduces to the
one-dimensional NLHE~\eqref{eq:NLHE_1D}. Therefore, we conjecture
that singular standing-ring solutions of the NLHE~\eqref{eq:NLHE}
exist for any~$\sigma>0$, and that the blowup profile and blowup
rate of these solutions are the same as those of singular peak-type
solutions of the one-dimensional NLHE with the same~$\sigma$.
\begin{conj}    \label{conj:heat_ring_equals_peak}
    Let~$u(t,r)$ be a singular standing-ring solution of the
    NLHE~\eqref{eq:NLHE}.
    Then, the solution is self-similar in the ring region,
i.e.,~$u\sim u_S$ for $r-\rmax=\mathcal{O}(L)$, where
            \begin{equation}\label{eq:uS}
                u_S(t,r)= v_S\left( t,x=r-\rmax(t) \right),
            \end{equation}
and~$v_S$ is given by equation~\eqref{eq:NLHE_1D_prof}.
\end{conj}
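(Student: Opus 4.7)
The plan is to mimic the asymptotic reduction already used for the NLS (Section~\ref{sec:theory_for_NLS_standing}) and the BNLS (before Conjecture~\ref{conj:BNLS_ring_equals_peak}): in the thin ring region the $d$-dimensional radial equation collapses at leading order to its one-dimensional counterpart, and the profile and blowup rate are then read off from the 1D theory reviewed in \eqref{eq:NLHE_1D_prof}.

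First I would fix the standing-ring ansatz $|u|\sim \lambda^{-1/\sigma}(t)\,|F((r-r_{max}(t))/L(t))|$ with $L(t)\to 0$ and $0<\lim_{t\to T_c}r_{max}(t)<\infty$, and estimate the two contributions to the radial Laplacian $\Delta u=u_{rr}+\frac{d-1}{r}u_r$ inside the window $|r-r_{max}|=\mathcal{O}(L)$. Exactly as in Section~\ref{sec:theory_for_NLS_standing}, one gets $[u_{rr}]\sim [u]/L^2(t)$ while $[(d-1)u_r/r]\sim (d-1)[u]/(r_{max}(T_c)L(t))$, so their ratio is $\mathcal{O}(L(t)/r_{max}(T_c))\to 0$. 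Hence $\Delta u\sim u_{rr}$ near the singularity, and the change of variables $x=r-r_{max}(t)$ reduces \eqref{eq:NLHE} to the one-dimensional NLHE \eqref{eq:NLHE_1D} at leading order. Invoking the self-similar peak-type asymptotics \eqref{eq:NLHE_1D_prof} for the 1D NLHE then identifies the blowup profile as $u\sim u_S$ with $u_S$ as in \eqref{eq:uS}, completing the \emph{formal} derivation of the conjecture.

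I would then back up the reduction with simulations in the spirit of Section~\ref{sec:simulations_for_NLS_standing}: solve \eqref{eq:NLHE} in $d=2$ with a ring-shaped initial condition for representative values of $\sigma$, verify that $r_{max}(t)$ converges to a strictly positive finite limit (so the ring is genuinely standing), check that the appropriately rescaled profile $\lambda^{1/\sigma}(t)\,u(t,r_{max}(t)+L(t)\xi)$ collapses over several orders of magnitude in $1/L$ onto $(1+\xi^2)^{-1/(2\sigma)}$, and that the rates match $\lambda(t)\sim\sqrt{2\sigma(T_c-t)}$ and $L(t)\sim\sqrt{2(2+1/\sigma)(T_c-t)|\ln(T_c-t)|}$.

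The hard part will be making the reduction step rigorous. One has to control the outer region, the deviation $r_{max}(t)-r_{max}(T_c)$, and the sub-leading perturbation from $(d-1)u_r/r$, and show that none of these alter the 1D Giga--Kohn dynamics around $v_S$. The parabolic setting is favorable since the spectrum of the linearization around $v_S$ is well understood, so a modulation/semigroup argument should be feasible in principle; but this is well beyond the informal derivation above, which is why, as in the NLS and BNLS sections, the statement is left at the level of a conjecture supported by the numerics.
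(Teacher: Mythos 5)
Your proposal matches the paper's treatment essentially verbatim: the paper likewise supports this conjecture only by the formal observation that $\Delta u\sim u_{rr}$ in the ring region (so that the NLHE reduces to the one-dimensional equation~\eqref{eq:NLHE_1D} near the singularity, whose peak-type asymptotics~\eqref{eq:NLHE_1D_prof} then give~$u_S$), followed by a $d=2$, $\sigma=3$ simulation with a ring initial condition confirming that $r_{max}$ tends to a positive limit and that the rescaled profile agrees with $(1+\rho^2)^{-1/(2\sigma)}$. Your closing remarks about what a rigorous proof would require go beyond what the paper attempts, but correctly identify why the statement is left as a conjecture.
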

 \subsection{Simulations}
We solve the NLHE~\eqref{eq:NLHE} with~$d=2$ and~$\sigma=3$ and the
initial condition
\begin{equation}\label{eq:heat_IC}
u_0(r)=2e^{-2(r-5)^2}.
\end{equation}
The solution blows up with a ring profile, see
Figure~\ref{fig:heat_initial_focusing}.
\begin{figure}
    \begin{center}
    \scalebox{.8}{\includegraphics{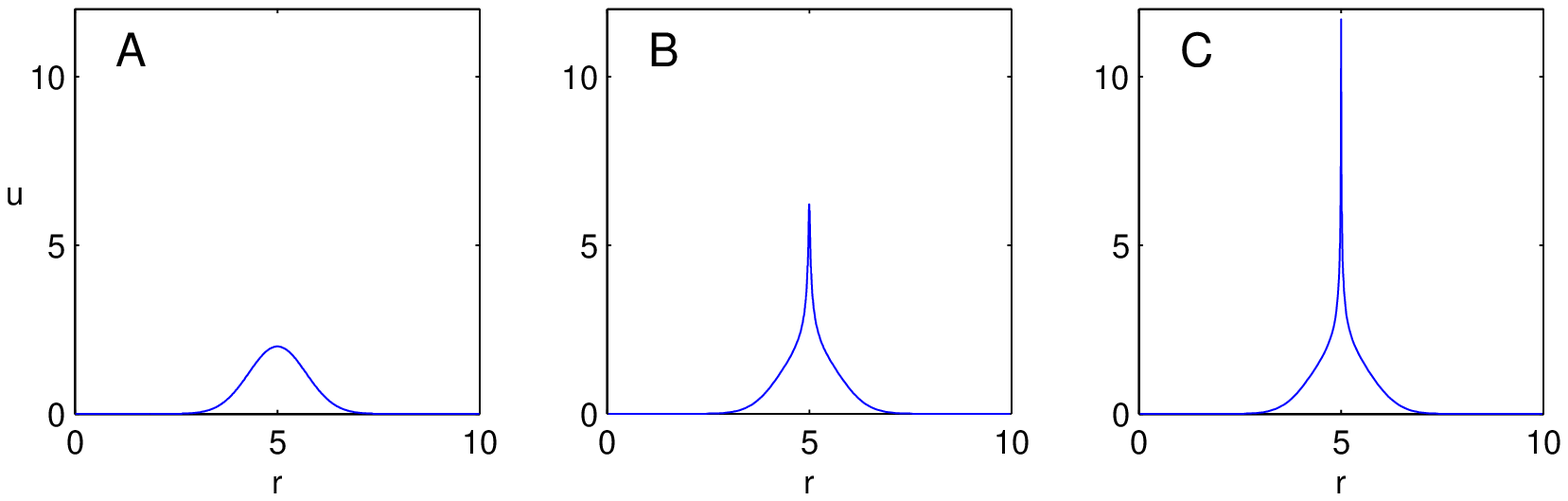}}
    \mycaption{
        Solution of the NLHE~\eqref{eq:NLHE} with $d=2$,~$\sigma=3$ and
        the initial condition~\eqref{eq:heat_IC}. A:~$t=0$. B:~$t=0.002683$. C:~$t=0.002686$.
        \label{fig:heat_initial_focusing}}
    \end{center}
\end{figure}
Since~$\lim_{t\to T_c} r_{max}(t) = 4.9994>0$, see
Figure~\ref{fig:heat_d=2_sigma=3}A, the ring is standing.
\begin{figure}
    \begin{center}
    \scalebox{.8}{\includegraphics{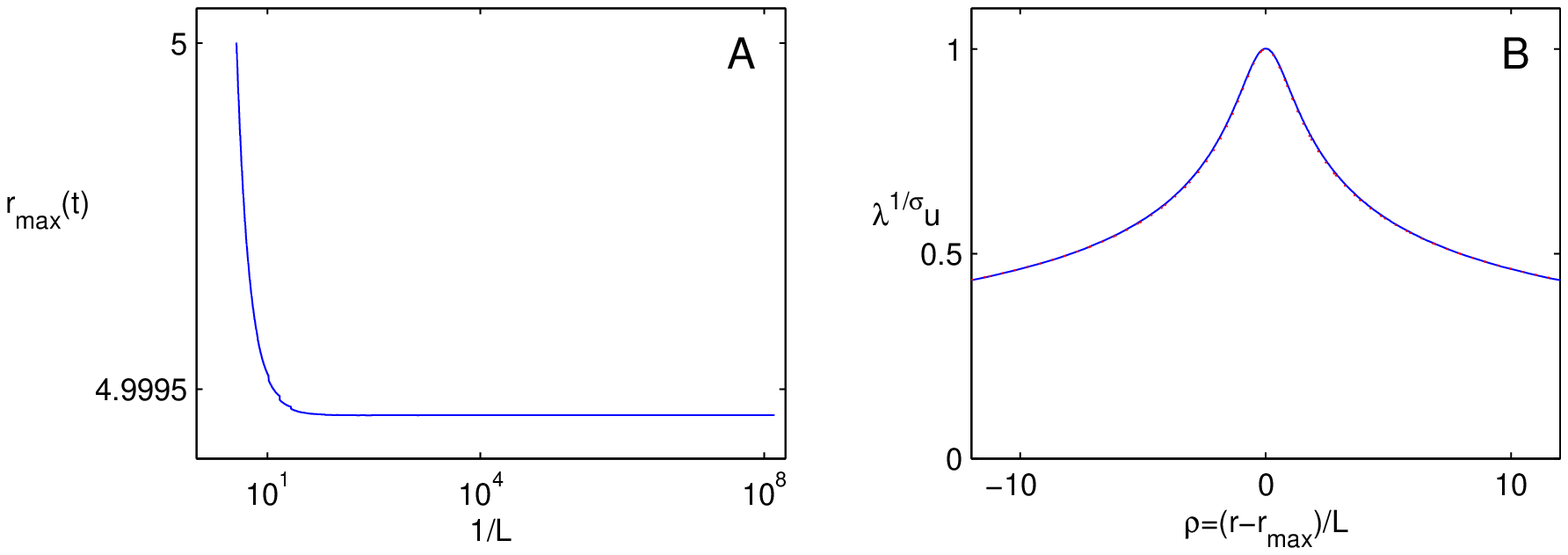}}
    \mycaption{
        Solution of Figure~\ref{fig:heat_initial_focusing}.
        A: Ring radius~$\rmax(t)$ as a function of~$1/L(t)$.
        B: Solution~$u$ at~$1/L=10^{8}$  (solid curve).
           Dashed curve is the asymptotic~$u_S$ profile~\eqref{eq:rescaledUs}.
            Both curves are rescaled according to~\eqref{eq:heat_normalization}. The two
           curves are indistinguishable.
        \label{fig:heat_d=2_sigma=3}
    }
    \end{center}
\end{figure}

In order to show that the solution blows up with the
self-similar~$u_S$ profile~\eqref{eq:uS}, we rescale the solution
according to
\begin{equation}\label{eq:heat_normalization}
u_{\rm rescaled}(t,r)=\lambda^{\frac1\sigma}(t)u\left(\frac{r-r_{max}}{L(t)}\right),
\end{equation}
where~$\lambda(t)$ and~$L(t)$ are given by~\eqref{eq:NLHE_1D_prof_L}
and~$T_c$ is extracted from the numerical simulation. The rescaled
profile at~$1/L=10^8$ is in perfect agreement with the
rescaled~$u_S$ profile
\begin{equation}\label{eq:rescaledUs}
u_{S,{\rm rescaled}}=\frac1{\left(1+\rho^2\right)^{\frac1{2\sigma}}},
\end{equation}
see Figure~\ref{fig:heat_d=2_sigma=3}B. Therefore, the numerical
results provide a strong support to
Conjecture~\ref{conj:heat_ring_equals_peak}.
\section{Singular standing-ring solutions of the nonlinear biharmonic heat equation}\label{sec:BNLHE}
The $d$-dimensional radially-symmetric biharmonic nonlinear heat
(BNLHE) equation
\begin{equation}    \label{eq:BNLHE}
    u_t(t,r)+\Delta^2 u-|u|^{2\sigma}u=0,\qquad \sigma>0,\quad d>1,
\end{equation}
where~$u$ is real and~$\Delta^2$ is the radial biharmonic
operator~\eqref{eq:radial_bi_Laplacian}, admits singular solutions
for any~$\sigma>0$~\cite{Budd-Williams-Galaktionov_2004}. To the
best of our knowledge, all known singular solutions of the
BNLHE~\eqref{eq:BNLHE} collapse at a point. We now show that the
BNLHE admits singular standing-ring solutions. The blowup profile
and blowup rate of these solutions are the same as those of singular
peak-type solutions of the one-dimensional BNLHE with the
same~$\sigma$.

\subsection{Peak-type solutions of the one-dimensional BNLHE (review)}

The one-dimensional BNLHE equation
\begin{equation}    \label{eq:BNLHE_1D}
    v_t(t,x)+v_{xxxx}-|v|^{2\sigma}v=0,\qquad \sigma>0,
\end{equation}
admits singular peak-type solutions which collapse with the
self-similar peak-type profile
\begin{equation}\label{eq:BNLHE_1D_prof}
    v_B(t,x)= \frac{1}{L^{2/\sigma}(t)} B(\xi),\qquad
    L(t) = \kappa_{\scriptscriptstyle \text{BH}} \sqrt[4]{T_c-t},\qquad
    \xi=\frac{x}{L(t)},
\end{equation}
see~\cite{Budd-Williams-Galaktionov_2004}. The self-similar
profile~$B(\xi)$ is not known explicitly. Unlike the one-dimensional
BNLS, the one-dimensional BNLHE admits singular solutions for
any~$\sigma>0$.

\subsection{Analysis}
Let us consider singular standing-ring solutions of the BNLHE~\eqref{eq:BNLHE}.
Since~$\Delta^2 u \sim u_{rrrr}$ in the ring region, near the singularity the
BNLHE~\eqref{eq:BNLHE} reduces to the one-dimensional BNLHE~\eqref{eq:BNLHE_1D}.
We therefore conjecture that standing-ring solutions of
equation~\eqref{eq:BNLHE} exist, and that their blowup profile and blowup rate
is the same as those of singular peak solutions of
equation~\eqref{eq:BNLHE_1D}:
\begin{conj}    \label{conj:Bheat_ring_equals_peak}
    Let~$u(t,r)$ be a singular standing-ring solution of the
    BNLHE~\eqref{eq:BNLHE}.
    Then, the solution is self-similar in the ring region, i.e.,$u\sim u_B$ for $r-\rmax=\mathcal{O}(L)$, where
            \begin{equation}\label{eq:uBH}
                u_B(t,r)= v_B\left( t,x=r-\rmax(t) \right),
            \end{equation}
and~$v_B$ is given by equation~\eqref{eq:BNLHE_1D_prof}.
\end{conj}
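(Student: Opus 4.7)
The plan is to mimic the reduction argument used for the BNLS in Section~\ref{sec:BNLS_standing}, replacing the role of Conjecture~\ref{conj:admissible_BNLS} by the analogous (and, for the heat case, rigorously established in~\cite{Budd-Williams-Galaktionov_2004}) theory of peak-type solutions of the one-dimensional BNLHE~\eqref{eq:BNLHE_1D}. The overall scheme is: (i) estimate each term of the radial biharmonic operator in the ring region and show that only $\partial_r^4$ survives asymptotically; (ii) deduce that in a shrinking neighborhood of $r=\rmax(t)$ the function $\phi(t,x):=u(t,\rmax(t)+x)$ satisfies the one-dimensional BNLHE up to lower-order corrections; (iii) invoke the known self-similar peak-type asymptotics~\eqref{eq:BNLHE_1D_prof} of the one-dimensional BNLHE to identify the profile and blowup rate of $u$.

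For step (i), I would argue exactly as in Section~\ref{sec:BNLS_standing}: a standing-ring solution satisfies $0<\lim_{t\to T_c}\rmax(t)<\infty$, so in the ring region $r-\rmax=\mathcal{O}(L(t))$ we have, for each $k=0,\dots,4$,
\[
\left[\frac{1}{r^{4-k}}\partial_r^k u\right]=\mathcal{O}\!\left(\frac{[u]}{L^k(t)}\right),
\]
and since $L(t)\to 0$ while $r\sim\rmax(T_c)>0$ is bounded away from $0$, the $k=4$ term dominates. Therefore $\Delta^2 u=\partial_r^4 u+o(L^{-4}[u])$, and equation~\eqref{eq:BNLHE} reduces to
\[
u_t+\partial_r^4 u-|u|^{2\sigma}u=o\!\left(L^{-4}[u]\right).
\]

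For step (ii), translating $x=r-\rmax(t)$ and using that, for a standing ring, $\dot{\rmax}\cdot L^{-1}\to 0$ (so the translation does not introduce a leading-order drift term), the function $\phi(t,x)$ satisfies the one-dimensional BNLHE~\eqref{eq:BNLHE_1D} asymptotically. For step (iii), the self-similar peak-type profile $v_B$ of~\eqref{eq:BNLHE_1D_prof} is known to be an attractor for singular peak-type solutions of~\eqref{eq:BNLHE_1D}; matching $\phi\sim v_B$ and transforming back to the $r$ variable gives exactly~\eqref{eq:uBH}, with the quartic-root blowup rate $L(t)\sim\kappa_{\scriptscriptstyle\text{BH}}\sqrt[4]{T_c-t}$.

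The main obstacle, and the reason the statement is presented as a conjecture rather than a theorem, is twofold. First, one must rule out that an outer-region contribution feeds into the ring region so as to modify the leading-order balance; this is essentially a stability statement for the $v_B$ profile under the specific class of perturbations generated by the curvature terms $(d-1)r^{-1}\partial_r^3 u$ etc., and, as in the BNLS case, no rigorous proof of such stability is currently available. Second, the very existence of a standing-ring singular solution of~\eqref{eq:BNLHE} is assumed and not proved; one would need either an explicit construction (for instance via matched asymptotic expansions together with a fixed-point argument around $v_B$) or a variational/continuation argument from known shrinking-ring solutions. For these reasons I would present the analytical reduction above as the formal justification, and rely on the numerical simulations in the next subsection for empirical support, exactly as is done in Sections~\ref{sec:BNLS_standing} and~\ref{sec:NLH}.
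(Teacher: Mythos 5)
Your proposal matches the paper's own (necessarily non-rigorous) justification: the paper likewise argues that $\Delta^2 u\sim u_{rrrr}$ in the ring region because the curvature terms of~\eqref{eq:radial_bi_Laplacian} are $\mathcal{O}(L^{-k})$ with $k<4$, reduces to the one-dimensional BNLHE~\eqref{eq:BNLHE_1D}, invokes the peak-type profile $v_B$ of~\eqref{eq:BNLHE_1D_prof}, and then defers to the numerical simulations for support. Your added remarks on why the statement remains a conjecture (stability under the curvature-term perturbations, existence of the standing ring) are consistent with the paper's framing and do not change the approach.
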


\subsection{Simulations}
We solve the BNLHE~\eqref{eq:BNLHE} with~$d=2$ and~$\sigma=3$ and the initial
condition~\eqref{eq:heat_IC}.
The solution blows up with a standing-ring profile, see
Figure~\ref{fig:BiHeat}A.
\begin{figure}
    \begin{center}
    \scalebox{.8}{\includegraphics{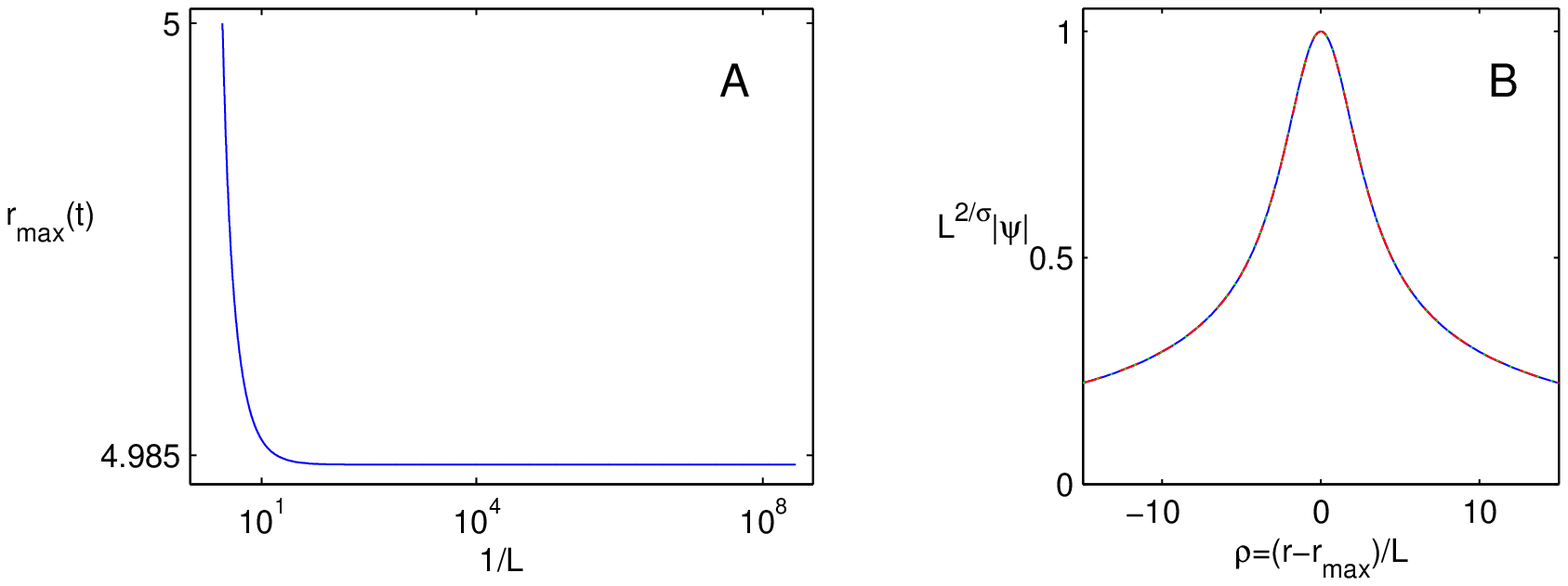}}
    \mycaption{
        Standing ring solution of the BNLHE~\eqref{eq:BNLHE} with initial condition~\eqref{eq:heat_IC}.
        A: ring radius~$\rmax(t)$ as a function of~$1/L(t)$
        B: solution~$u$ rescaled according
        to~\eqref{eq:normalization_psi_B}, at focusing factors of $1/L=10^{4}$ (solid curve)
        and $1/L=10^{8}$ (dashed curve).
        The rescaled peak-type solution of the one-dimensional NLH~\eqref{eq:BNLHE_1D} is given by the dotted
        curve.
        All three curves are indistinguishable.
        \label{fig:BiHeat}
    }
    \end{center}
\end{figure}
In Figure~\ref{fig:BiHeat}B we plot the solution, rescaled according
to~\eqref{eq:normalization_psi_B}, at focusing levels of
$1/L=10^{4}$ and $1/L=10^{8}$. The two curves are indistinguishable,
indicating that the solution is indeed self-similar. Next, we want
to show that the self-similar blowup profile is given by~$B(\xi)$,
the self-similar profile of peak-type solutions of the
one-dimensional NLHE, see~\eqref{eq:BNLHE_1D_prof}. To do that, we
compute the solution of the one-dimensional
BNLHE~\eqref{eq:BNLHE_1D} with~$\sigma=3$ and~$u(t=0,x)=2e^{-x^2}$,
and superimpose its profile at~$1/L=10^8$ in
Figure~\ref{fig:BiHeat}B. The three rescaled solutions are
indistinguishable, indicating that standing-ring solutions of the
BNLHE blowup with the self-similar profile of peak-type solutions of
the one-dimensional BNLHE.
In addition, we have from the numerical simulations that~$\lim_{t\to
T_c}L^3L_t\approx-1.2108$ when~$d=2$,
where~$L(t):=\|u\|_\infty^{-\sigma/2}$, and~$\lim_{t\to
T_c}L^3L_t\approx-1.2108$ when~$d=1$,
where~$L(t):=\|v\|_\infty^{-\sigma/2}$. Therefore, the blowup rate
of the standing-ring solution of the two-dimensional BNLHE is equal,
up to 5 significant digits, to the blowup rate of the singular
peak-type solution of the one-dimensional BNLHE, and is given by
\[
    L(t)\sim \kappa_{\scriptscriptstyle \text{BH}}\sqrt[4]{T_c-t},\qquad
        \kappa_{\scriptscriptstyle \text{BH}}\approx\sqrt[4]{4\cdot 1.2108}\approx1.4835.
\]
Thus, the numerical results provide a strong support for
Conjecture~\ref{conj:Bheat_ring_equals_peak}.

\section{Numerical Methods}\label{sec:numerical_methods}
\subsection{Admissible solutions of~\eqref{eq:ODE4S}}
The admissible solution~$S$ of~\eqref{eq:ODE4S} was calculated using
the shooting method of Budd, Chen and
Russel~\cite[Section~3.1]{Budd-99}. In this method, one searches in
the two-parameter space $(S(0),f_c)$ for the parameters such that
the solution of~\eqref{eq:ODE4S} satisfies the admissible solution
condition
\[
\lim_{\xi\to\infty}F(\xi;f_c,S(0))=0,\qquad F(\xi;f_c,S(0))=\left|\xi
S^\prime(\xi)+\left(\frac1\sigma+\frac2{f_c^2}i-i\frac{f_c^2}4
\xi^2\right)S(\xi)\right|^2.
\]
\subsection{Solution of the NLS, BNLS, NLHE and BNLHE}
In this study, we computed singular solutions of the
NLS~\eqref{eq:radial-NLS}, the BNLS~\eqref{eq:radial-BNLS}, the
NLHE~\eqref{eq:NLHE} and the BNLHE~\eqref{eq:BNLHE}. Theses
solutions become highly-localized, so that the spatial
scale-difference between the singular region $r-\rmax = {\cal O}(L)$
and the exterior regions can be as large as ${\cal O}(1/L)\sim
10^{10}$. In order to resolve the solution at both the singular and
non-singular regions, we use an adaptive grid.

We generate the adaptive grids using the {\em Static Grid
Redistribution} (SGR) method, which was first introduced by Ren and
Wang~\cite{Ren-00}, and was later simplified and improved by Gavish
and Ditkowsky~\cite{SGR-08}. Using this approach, the solution is
allowed to propagate (self-focus) until it starts to become
under-resolved.  At this stage, a new grid, with the same number of
grid-points, is generated using De'Boors `equidistribution
principle', see~\cite{Ren-00,SGR-08} for details.

The method in~\cite{SGR-08} also allows control of the portion of
grid points that migrate into the singular region, preventing
under-resolution at the exterior regions.
In~\cite{Baruch_Fibich_Mandelbaum:2009}, we further extended the
approach to prevent under-resolution in the transition region~${\cal
O}(L)\ll r-\rmax \ll O(1)$.

On the sequence of grids, the equations are solved using a
Predictor-Corrector Crank-Nicholson scheme.
\subsubsection*{Acknowledgments.}
This research was partially supported by the Israel Science
Foundation (ISF grant No. 123/08). The research of Nir Gavish was
also partially supported by the Israel Ministry of Science Culture
and Sports.


\end{document}